\theoremstyle{plain}
\newtheorem{thm}{Theorem}[section]
\newtheorem{lemma}[thm]{Lemma}
\newtheorem{corollary}[thm]{Corollary}
\newtheorem{proposition}[thm]{Proposition}
\theoremstyle{definition}
\newtheorem{definition}[thm]{Definition}
\newtheorem{remark}[thm]{Remark}
\theoremstyle{example}
\theoremstyle{plain}
\title{$E$-theory Spectra for graded $C^\ast$-algebras}
\author{Sarah L. Browne}
\begin{document}
\maketitle

\begin{abstract}
This paper brings together $C^\ast$-algebras and algebraic topology in terms of viewing a $C^\ast$-algebraic invariant in terms of a topological spectrum. $E$-theory, $E(A,B)$, is a bivariant functor in the sense that is a cohomology functor in the first variable and a homology functor in the second variable but underlying goes from the category of separable $C^\ast$-algebras and $\ast$-homomorphisms to the category of abelian groups and group homomorphisms. 
Here we create a generalisation of a orthogonal spectrum to quasi-topological spaces for $E$-theory . This includes a rich product structure in the context of graded separable $C^\ast$-algebras. 
\end{abstract} 

\section*{Introduction}
Separable $C^\ast$-algebras are analytical objects used in non-commutative geometry. $E$-theory, $E(A,B)$, is a invariant of $C^\ast$-algebras and is a bivariant functor from the category of separable $C^\ast$-algebras and $\ast$-homomorphisms to the category of abelian groups and group homomorphisms. Here we will only consider real graded $C^\ast$-algebras since then we have a useful tool for application in differential geometry. $E$-theory is a cohomology theory in its first variable and a homology theory in its second variable. It was first defined by Higson~\cite{Hig90} in a categorical manner and Connes and Higson~\cite{CH90} gave a concrete description using homotopy classes of certain morphisms of $C^\ast$-algebras. This will form section~1. 

Spectrum is a topological notion used to represent stable homotopy theories. Here we require the notion of a quasi-spectrum, which is considered as a spectrum over quasi-topological spaces. We incorporate an action of the orthogonal group and create an orthogonal quasi-spectrum. This has a rich product structure as we will see. 

The main construction encodes the Bott periodicity of $E$-theory proving in the complex graded case by Guentner-Higson~\cite{GH04} and in the real case by Browne~\cite{Bro}, as an $\Omega$-quasi-spectrum coming from weak equivalences. Additionally we also encode the product structure of $E$-theory. This then will give us a stable homotopy theory. It is worth noting that the spectrum will be dependent on $C^\ast$-algebras just like the K-theory spectrum for $C^\ast$-categories is dependent on $C^\ast$-categories defined by Paul D.~Mitchener~\cite{Mit01}. The author likes to think of this as a ``local'' spectrum and it is suited for possible applications to positive scalar curvature by generalising work of Weiss-Williams~\cite{WW95}

Further we include a way of connecting $K$-theory spectrum and $K$-homology spectrum using the relations of $E$-theory with both $K$-theory and $K$-homology in the final section of the paper.

\section*{Acknowledgements}
The author would like to thank their PhD supervisor, Paul Mitchener for all the valuable meetings and discussions during her PhD. The author would also like to thank the EPSRC for funding her PhD which made this possible. Lastly the author wishes to thank many mathematicians for conversations and particularly Jamie Gabe for his insight during a visit at the University of Southampton.  

\section{$E$-theory}
This section details the analytic necessities which we need to inlude in our definition of the quasi-spectrum for $E$-theory. 
A \emph{complex $C^\ast$-algebra} is a complex Banach space with involution $\ast$ satisfying for all $a \in A$, the $C^\ast$-identity $||a^\ast a || = ||a||^2$. A \emph{real $C^\ast$-algebra} is a real Banach space with an involution, satisfying th $C^\ast$-identity and additonally that the element $1+a^\ast a$ is invertible in $A$ for all $a \in A$. A $C^\ast$-algebra is called \emph{separable} if its underlying topological space has a countable dense subset and all of our $C^\ast$-algebras will be separable.
A map $f \colon A \rightarrow B$ of real $C^\ast$-algebras is called a \emph{$\ast$-homomorphism} and is an algebra homomorphism and satisfies $f(a^\ast) = (f(a))^\ast$ for all $a \in A$. 
A \emph{grading} on a $C^\ast$-algebra $A$ is an automorphism $\delta_A \colon A \rightarrow A$ such that $\delta_{A}^2 = 1$. We can also think of a grading by allocating an even and odd notion on $A$. That is we have $A = A_{\text{even}} \oplus A_{\text{odd}}$, where
\[A_{\text{even}} = \{a \in A ~ | ~ \delta_{A} (a) = a \} ~ \text{and} ~  A_{\text{odd}} = \{a \in A ~ | ~ \delta_{A} (a) = -a \}.\]
Then additionally we define the \emph{degree} of an element $a \in A$ by: 
\begin{equation*}
\text{deg}(a) =
\begin{cases}
0,& \text{if}\; a \in A_{\text{even}} \\
1,& \text{if} \; a \in A_{\text{odd}}.
\end{cases}
\end{equation*}
An important example of a graded real $C^\ast$-algebra is the algebra of continuous real-valued functions vanishing at infinity, $\mathcal{S} = C_0(\mathbb{R})$, under the supremum norm and grading given by $\delta(f)(x) = f(-x)$ for all $x \in \mathbb{R}$. Denote this algebra by $\mathcal{S}$. Also we can consider the \emph{suspension} of a $C^\ast$-algebra
\[\Sigma A = \{f \colon [0,1] \rightarrow A ~|~ f(0)=f(1)=0\}.\]
If $A$ is graded then $\Sigma A$ has grading coming from $A$. Also we can consider the $n$-fold suspension, $\Sigma^n A = \Sigma^{n-1} \Sigma A$. 

Now we should note that a \emph{graded $\ast$-homomorphism} is a $\ast$-homomorphism that preserves the grading. That is, if an element is even then its image is even and if an element is odd then its image is odd. Furthermore we can define a different type of morphism between $C^\ast$-algebras:- 

\begin{definition}
Let $A$ and $B$ be real graded $C^\ast$-algebras with gradings $\delta_A$ and $\delta_B$ respectively. A \emph{graded asymptotic morphism} $\varphi \colon A \dashrightarrow B$ is a family of functions $\{\varphi_t\}_{t \in [1,\infty)}$ such that: 
\begin{enumerate}
\item the map $t \mapsto \varphi_t(a)$, from $[1,\infty)$ to  $B$ is continuous and bounded for each $a \in A$, 
\item $\lim_{t \to \infty} || \varphi_t (ab) - \varphi_t (a)\varphi_t(b) || = 0,$ for each $a,b \in A$, 
\item $\lim_{t \to \infty} || \varphi_t (a + \lambda b) - \varphi_t (a) - \lambda \varphi_t (b) || = 0,$ for each $a,b \in A$, $\lambda \in \mathbb{R}$, 
\item $\lim_{t \to \infty} || \varphi_t (a^*) - \varphi_t (a)^* || = 0,$ for each $a \in A$, 
\item $\lim_{t \to \infty} || \varphi_t(\delta_A(a)) - \delta_B(\varphi_t(a)) || = 0$ for each $a \in A$. 
\end{enumerate}
\end{definition}
Denote the set of these by $\text{Asy}_g(A,B)$. 
It will be useful to consider the \emph{graded tensor product}: 
\begin{definition}
Let $A$ and $B$ be graded $C^\ast$-algebras with gradings $\delta_A$ and $\delta_B$ respectively. Then  define $A \widehat{\otimes} B$ to be the completion of the algebraic tensor product of $A$ and $B$ in the norm 
\[|| \sum_i a_i \otimes b_i || = \sup_{\varphi, \psi} || \sum_i \varphi(a_i) \psi(b_i)||\]  
where $\varphi \colon A \dashrightarrow C$, $\psi \colon A \dashrightarrow C$ are graded asymptotic morphisms to a common graded $C^\ast$-algebra $C$. 
We equip $A \widehat{\otimes} B$ with involution, multiplication and grading defined by:
\begin{enumerate}
\item $(a \widehat{\otimes} b)^{\ast} = (-1)^{\text{deg}(a) \text{deg}(b)} a^{\ast} \otimes~  b^{\ast}$
\item $(a \widehat{\otimes} b)(c \widehat{\otimes} d) =  (-1)^{\text{deg}(b) \text{deg}(c)}(ac \otimes bd)$
\item $\gamma(a \widehat{\otimes} b) = \alpha(a) \otimes \beta(b)$
\end{enumerate}
Extending by linearity gives $A \widehat{\otimes} B$. 
\end{definition}
We have the following result, Lemma~4.5 in~\cite{GHT00}, for asymptotic morphisms:
\begin{lemma} \label{tensorequivalence}
Let $\varphi \colon A_1 \dashrightarrow A_2$ and $\psi \colon B_1 \dashrightarrow B_2$ be (graded) asymptotic morphisms, then the compositions 
\[A_1 \widehat{\otimes} B_1 \xrightarrow{\varphi \widehat{\otimes} 1}A_2 \widehat{\otimes} B_1 \xrightarrow{1 \widehat{\otimes} \psi} A_2 \widehat{\otimes} B_2,\]
and
\[A_1 \widehat{\otimes} B_1 \xrightarrow{1 \widehat{\otimes} \psi}A_1 \widehat{\otimes} B_2 \xrightarrow{\varphi \widehat{\otimes} 1} A_2 \widehat{\otimes} B_2,\]
are equal. The $1$ symbolises the relevant identity morphism. 
\end{lemma}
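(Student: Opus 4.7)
\medskip

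\noindent\textbf{Proof plan.} My strategy is to first verify the claimed equality on a dense subspace, namely the algebraic tensor product of elementary tensors, and then extend by continuity to the completed graded tensor product. The point is that on an elementary tensor both compositions give the same element at every time $t$, not just asymptotically, so the lemma really reduces to two checks: (i) that each tensored map $\varphi \widehat{\otimes} 1$ and $1 \widehat{\otimes} \psi$ is a well-defined asymptotic morphism on the completed tensor product, and (ii) that the two composites, which agree on the algebraic tensor product, continue to agree after passing to the completion.

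First I would spell out the definition of the tensored maps on elementary tensors:
\[
(\varphi \widehat{\otimes} 1)_t(a_1 \widehat{\otimes} b_1) = \varphi_t(a_1) \widehat{\otimes} b_1, \qquad (1 \widehat{\otimes} \psi)_t(a_1 \widehat{\otimes} b_1) = a_1 \widehat{\otimes} \psi_t(b_1).
\]
Composing in either order on a simple tensor $a_1 \widehat{\otimes} b_1$ immediately gives $\varphi_t(a_1) \widehat{\otimes} \psi_t(b_1)$ at every parameter $t \in [1,\infty)$. Extending by bilinearity, the two families of maps agree identically on the algebraic tensor product $A_1 \odot B_1$, and hence certainly coincide asymptotically there.

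Next I would argue that both composites extend to bounded families of linear maps $A_1 \widehat{\otimes} B_1 \to A_2 \widehat{\otimes} B_2$ whose norms are controlled uniformly in $t$ by the sup of $\|\varphi_t\|$ and $\|\psi_t\|$ (both of which are bounded in $t$ since asymptotic morphisms are pointwise bounded, hence uniformly bounded on bounded sets by the uniform boundedness principle). Because $A_1 \odot B_1$ is norm-dense in $A_1 \widehat{\otimes} B_1$ and both composites are uniformly bounded, their pointwise agreement on elementary tensors forces their pointwise agreement on all of $A_1 \widehat{\otimes} B_1$. This gives the equality of the two composites as families of functions, which is a strictly stronger statement than equality as asymptotic morphisms.

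The main obstacle I anticipate is step (i): verifying that $\varphi \widehat{\otimes} 1$ genuinely defines an asymptotic morphism of the completed graded tensor product rather than only of the algebraic one. This is where the choice of norm on $A \widehat{\otimes} B$ matters; one must check that the family $t \mapsto \varphi_t \odot \mathrm{id}_{B_1}$ is uniformly bounded with respect to the graded tensor norm and that the five asymptotic conditions in the definition of a graded asymptotic morphism pass to the completion. Once the tensor product norm is known to be compatible with taking $\varphi \widehat{\otimes} 1$ and $1 \widehat{\otimes} \psi$ (which is built into the sup-over-asymptotic-morphisms definition of $\widehat{\otimes}$ given above), everything else is a routine density and uniform boundedness argument, and the lemma follows.
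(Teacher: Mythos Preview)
The paper does not give its own proof of this lemma: it simply records the statement as Lemma~4.5 of \cite{GHT00} and marks it with a \qed. So there is no proof in the paper to compare your argument to beyond the citation.

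That said, your sketch has a genuine gap. An asymptotic morphism $\varphi = \{\varphi_t\}$ is \emph{not} a family of bounded linear maps: each $\varphi_t$ is only asymptotically linear, multiplicative and $\ast$-preserving. Consequently the expression $\|\varphi_t\|$ has no meaning, the uniform boundedness principle does not apply, and the step ``agree on a dense subspace and extend by continuity'' fails as written, since nonlinear maps that coincide on a dense set need not coincide on the closure. Even the definition of $(\varphi \widehat{\otimes} 1)_t$ on a finite sum $\sum a_i \widehat{\otimes} b_i$ is already ambiguous: different representations of the same element of the algebraic tensor product can be sent to different images because $\varphi_t$ is not additive.

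The fix, and what \cite{GHT00} actually does, is to pass to the asymptotic algebra $\mathfrak{A}B = C_b([1,\infty),B)/C_0([1,\infty),B)$: an asymptotic morphism $A \dashrightarrow B$ is the same datum as a genuine $\ast$-homomorphism $A \to \mathfrak{A}B$. In that picture $\varphi \widehat{\otimes} 1$ and $1 \widehat{\otimes} \psi$ become honest $\ast$-homomorphisms of (completed) tensor products, and the two composites agree by the usual functoriality of the tensor product of $\ast$-homomorphisms. Your intuition that the two composites should coincide on elementary tensors is correct, but the passage to the completion has to be carried out at the level of $\ast$-homomorphisms into the asymptotic algebra rather than at the level of the individual (nonlinear) maps $\varphi_t$.
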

\qed 

Now $E$-theory is built out of homotopy classes so we need the notion of a homotopy of graded asymptotic morphism:- 

\begin{definition}
A \emph{homotopy} of graded asymptotic morphisms $\varphi_t, \psi_t \colon A \dashrightarrow B$ is a graded asymptotic homomorphism $\theta_t \colon A \dashrightarrow C([0,1],B)$ such that 
\[\theta_t(a)(0) = \varphi_t(a) ~ \text{and} ~ \theta_t(a)(1) = \psi_t(a).\]
\end{definition}
Denote the set of homootopy classes of asymptotic morphisms from $A$ to $B$ by $\llbracket A,B \rrbracket$. 

We require the next remark to impose a particular grading on compact operators.
\begin{remark}\label{grading on compacts}
Let $\mathcal{H}$ be a Hilbert space equipped with the orthogonal decomposition 
\[\mathcal{H} = \mathcal{H}_0 \oplus \mathcal{H}_1,\]
where $\mathcal{H}_0$ denotes the even elements and $\mathcal{H}_1$ denotes the odd elements.
Then the $C^\ast$-algebra $\mathcal{K}(\mathcal{H})$ of compact operators on such a Hilbert space is graded. For this grading, we consider $2 \times 2$ matrices of operators where the diagonal matrices are even and the off diagonal ones are odd. 
That is we have a grading \[\beta \colon \mathcal{K}(\mathcal{H}) \rightarrow \mathcal{K}(\mathcal{H})\] defined by 
\begin{equation*}
\beta(T) =
\begin{cases}
T &  \; \text{if} \; T \; \text{is even} \\
-T &  \; \text{if} \; T \; \text{is odd}. 
\end{cases}
\end{equation*}
\end{remark}

Then the graded $E$-theory groups are defined by: 
\[E^n_g(A,B) = \llbracket \mathcal{S} \widehat{\otimes} A \widehat{\otimes} \mathcal{K}(\mathcal{H}), \Sigma^n B  \widehat{\otimes} \mathcal{K}(\mathcal{H}) \rrbracket, \]
where $\Sigma^n B$ for $n \geq 0$ is the $n$-fold suspension of a $C^\ast$-algebra and it's grading comes from $B$, and where $\mathcal{K}(\mathcal{H})$ denotes the compact operators on some real graded Hilbert space with grading as detailed in Remark~\ref{grading on compacts}. 
If we have a graded $\ast$-homomorphisms $\varphi, \psi \colon A \rightarrow B$, we can also form homotopy classes of these, by setting $\varphi = \varphi_t$, $\psi= \psi_t$ for all $t \in [1, \infty)$. Denote the set of homotopy classes of $\ast$-homomorphisms from $A$ to $B$ by $[A,B]$.

It will become important to consider the graded $\ast$-homomorphism
\[\Delta \colon \mathcal{S} \rightarrow \mathcal{S} \widehat{\otimes} \mathcal{S},\]
defined in~\cite{GH04} and also detailed in~\cite{Bro}. The idea is to restrict to the set of continuous functions on the interval $[-R,R]$, denoted by $S_R$ and use functional calculus and define 
\[f(X_R \widehat{\otimes} 1 + 1 \widehat{\otimes} X_R) \in S_R \widehat{\otimes} S_R,\]
and thereafter obtain a graded $\ast$-homomorphism.

Now in $E$-theory we have a Bott periodicity result that can be found in~\cite{Bro}. Let $\mathbb{F} = \mathbb{R}$ or $\mathbb{C}$. Before we state the result we note that $\mathbb{F}_{n,0}$ is the \emph{Clifford algebra} on n generators $e_1,e_2, \ldots e_n$ such that $e_i^2=1$ and $e_i e_j= -e_j e_i$ for all $i,j$. Then we have the following:

\begin{thm}\label{graded Bott map}
There is a $\ast$-homomorphim $b\colon S \rightarrow \Sigma \widehat{\otimes} \mathbb{F}_{1,0}$ inducing an isomorphism
\[E(A,B) \cong E(A, \Sigma B \widehat{\otimes} \mathbb{F}_{1,0}) .\]
\end{thm}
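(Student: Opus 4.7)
The plan is to follow the Guentner--Higson strategy for Bott periodicity, adapted to the real graded setting as in~\cite{Bro}. Let $X$ denote the unbounded multiplier of $\Sigma$ given (after identifying $\Sigma \cong C_0(\mathbb{R})$) by the coordinate function, and let $e_1$ be the odd generator of $\mathbb{F}_{1,0}$. I would first define
\[ b(f) = f\bigl(X \widehat{\otimes} e_1\bigr) \qquad (f \in \mathcal{S}), \]
interpreted via functional calculus on the odd self-adjoint unbounded multiplier $X \widehat{\otimes} e_1$ of $\Sigma \widehat{\otimes} \mathbb{F}_{1,0}$. One then checks by restricting to $\mathcal{S}_R$, as in the construction of $\Delta$, that $b$ is a well-defined graded $\ast$-homomorphism. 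Since $X \widehat{\otimes} e_1$ is odd, the key grading identity reduces to $f(-t) \mapsto f\bigl(-(X\widehat{\otimes} e_1)\bigr)$, which matches the grading $\delta_{\Sigma \widehat{\otimes} \mathbb{F}_{1,0}}$ acting on $b(f)$.

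Next I would explain how $b$ induces the map $b_*\colon E(A,B) \to E(A, \Sigma B \widehat{\otimes} \mathbb{F}_{1,0})$: given $\varphi\colon \mathcal{S} \widehat{\otimes} A \widehat{\otimes} \mathcal{K} \dashrightarrow B \widehat{\otimes} \mathcal{K}$, precompose with $b \widehat{\otimes} 1 \widehat{\otimes} 1$ after using the comultiplication $\Delta\colon \mathcal{S} \to \mathcal{S} \widehat{\otimes} \mathcal{S}$ to produce the extra $\mathcal{S}$ factor for $b$ to act on. Functoriality and Lemma~\ref{tensorequivalence} ensure this passes to homotopy classes and yields a natural transformation.

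To produce an inverse, I would construct the \emph{Dirac} asymptotic morphism
\[ \alpha \colon \Sigma \widehat{\otimes} \mathbb{F}_{1,0} \widehat{\otimes} \mathcal{S} \dashrightarrow \mathcal{K}(L^2(\mathbb{R})), \]
built from the Bott--Dirac operator $D = \tfrac{d}{dx} + x \cdot e_1$ on $L^2(\mathbb{R}) \widehat{\otimes} \mathbb{F}_{1,0}$, by sending $f \in \mathcal{S}$ to $f_t(D) := f(t^{-1}D)$ and $g \in \Sigma$ to multiplication by $g$. Standard estimates show $\alpha_t$ is asymptotically multiplicative, $\ast$-preserving, and grading-preserving. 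Composing with $\alpha$ defines the candidate inverse $\alpha_*\colon E(A, \Sigma B \widehat{\otimes} \mathbb{F}_{1,0}) \to E(A,B)$.

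The remaining task is to verify $\alpha \circ b \sim \mathrm{id}$ and $b \circ \alpha \sim \mathrm{id}$ as classes in the appropriate $E$-theory groups. The first composition is the easier one: it reduces to computing $f(t^{-1}D)$ applied to the vacuum vector and reproduces (up to a homotopy of asymptotic morphisms) the identity on $\mathcal{S}$, using the fact that $D^2 = -\frac{d^2}{dx^2} + x^2 + e_1$ has simple ground state. The \emph{main obstacle} is the second composition $b \circ \alpha \sim \mathrm{id}$: this requires a rotation homotopy in $L^2(\mathbb{R}) \widehat{\otimes} L^2(\mathbb{R})$ interchanging the multiplier variable and the Dirac variable, combined with a rescaling argument, exactly as in~\cite{GH04}, and in the real graded case one must additionally track compatibility with the real structure and the Clifford grading. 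Once $b_* \circ \alpha_* = \mathrm{id}$ and $\alpha_* \circ b_* = \mathrm{id}$ are established, the theorem follows.
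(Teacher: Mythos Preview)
The paper does not actually prove this theorem: it is stated and immediately attributed to \cite{Bro} (the real graded case, extending \cite{GH04}), with no argument given in the text. Your outline reproduces exactly the Guentner--Higson strategy that those references carry out --- define the Bott class $b$ via functional calculus on $X \widehat{\otimes} e_1$, construct the Dirac asymptotic morphism from the harmonic-oscillator operator, and verify the two compositions are homotopic to identities via the Mehler-kernel / rotation-homotopy arguments --- so your proposal is correct and aligned with the cited proof, even though the present paper treats the result as a black box.
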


Also for Section~\ref{$K$-theory}, we have the following definitions coming from the $E$-theory definition. The $K$-theory groups are given by: 
$$K_n(A) = [\mathcal{S}, \Sigma^n A \widehat{\otimes} \mathcal{K}(\mathcal{H})] \cong E_n(\mathbb{R}, \Sigma^n A),$$ 
and the $K$-homology is given by
$$\mathbb{K}_{\text{hom}}(A) = E_n(A, \mathbb{R}).$$


\section{Quasi-topological spaces}
In this section we generalise the notion of orthogonal spectra to quasi-topological spaces. In order to do this, we have to define the notion of a quasi-continuous group action and further prove we have a symmetric monoidal structure on the category of quasi-orthogonal sequences which we also define. 

Firstly we recall some notions of quasi-topological spaces by Spanier~\cite{Spa63} which we will need as it is not known if we can put a standard topology on the set of asymptotic morphisms. 
\begin{definition}
A \emph{quasi-topology} on a set $X$, is a collection of sets of maps from $C$ to $X$ for each compact Hausdorff space $C$, written $Q(C,X)$, called quasi-continuous and satisfying:
\begin{itemize}
\item any constant map $C \rightarrow X$ belongs to $Q(C,X)$, 
\item if $f \colon C_1 \rightarrow C_2$ is a map of compact Hausdorff spaces and $g \in Q(C_2,X)$ then $gf \in Q(C_1,X)$,  
\item for a disjoint union $C = C_1 \amalg C_2$ of closed compact Hausdorff spaces, a map $g \colon C \rightarrow X$ is contained in $Q(C,X)$ if and only if $g|_{C_i} \in Q(C_i,X)$ for $i=1,2$,
\item for every $f \colon C_1 \rightarrow C_2$ surjective map of compact Hausdorff spaces, then a map $h \colon C_2 \rightarrow X$ is quasi-continuous if $h \circ f$ is quasi-continuous. 
\end{itemize}
A \emph{quasi-topological space} is a set $X$ endowed with a quasi-topology as described above. 
\end{definition}
If $X$ is a topological space we can obtain a quasi-topology on $X$ by considering $Q(C,X)$ as the set of continuous maps from $C$ to $X$ in the topology of $X$. 

A map of quasi-topological spaces $f \colon X \rightarrow Y$ is called \emph{quasi-continuous} if $g \in Q(C,X)$ implies that the composite $fg \in Q(C,Y)$. Also by the definition of quasi-continuous maps, a composite of quasi-continuous maps is also quasi-continuous. 
A \emph{quasi-homeomorphism} $f \colon X \rightarrow Y$ between quasi-topological spaces is a quasi-continuous bijection with a quasi-continuous inverse $g \colon Y \rightarrow X$.

\begin{definition} Let $X,Y$ be quasi-topological spaces and $f,g \colon X \rightarrow Y$ be quasi-continuous maps. Then a homotopy is a quasi-continuous map 
\[H \colon X \rightarrow C([0,1],Y),\]
such that for all $x \in X$, $H(x)(0) = f(x)$ and $H(x)(1)= g(x)$. 
\end{definition}

The suspension and loop space of a quasi-topological space $X$ are defined similarly to the case of topological spaces by
\[\Sigma_{\text{top}}X = S^1 \wedge X,\]
and  
\[\Omega X = \{ \mu \colon S^1 \rightarrow X ~ | ~ \mu ~\text{is quasi-continuous and basepoint preserving} \}\]
and we consider the circle $S^1$ with the quasi-topology that comes from the standard topology on $\mathbb{R}^2$. That is, our quasi-continuous maps are the continuous maps from every compact Hausdorff space in to $S^1$ in the topology from $\mathbb{R}^2$. 
Now we check that $\Sigma_{\text{top}}$ and $\Omega$ are adjoints in the category where objects are quasi-topological spaces and arrows are quasi-continuous maps. In order to do this, we consider an abstract result and obtain it as a corollary. 

\begin{proposition}\label{wedgesarefunctions}
Let $X,Y$ and $Z$ be quasi-topological spaces. Then 
\[F(X \wedge Y, Z) ~\text{and}~ F(X, F(Y,Z)),\]
are quasi-homeomorphic. 
\end{proposition}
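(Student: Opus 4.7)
The plan is to exhibit the standard currying bijection and check it is a quasi-homeomorphism. Define
\[\Phi \colon F(X \wedge Y, Z) \to F(X, F(Y,Z)), \qquad \Phi(f)(x)(y) = f(x \wedge y),\]
with candidate inverse $\Psi(g)(x \wedge y) = g(x)(y)$. That $\Phi$ and $\Psi$ are mutually inverse set-theoretic bijections is formal from the universal property of the smash product at the level of pointed sets; all the work is in showing both maps respect the quasi-topologies.

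First I would confirm $\Phi$ lands in the right codomain. For fixed $x \in X$ the slice inclusion $\iota_x \colon Y \to X \wedge Y$, $y \mapsto x \wedge y$, is quasi-continuous, so $\Phi(f)(x) = f \circ \iota_x$ is a quasi-continuous pointed map $Y \to Z$. To show $\Phi(f)\colon X \to F(Y,Z)$ is itself quasi-continuous I invoke the natural quasi-topology on $F(Y,Z)$, in which $\alpha \colon C \to F(Y,Z)$ belongs to $Q(C, F(Y,Z))$ precisely when its adjoint $\hat{\alpha}\colon C \wedge Y \to Z$ is quasi-continuous; with this definition, quasi-continuity of $\Phi(f)$ at a test space $C$ amounts exactly to quasi-continuity of the composite $C \wedge Y \to X \wedge Y \xrightarrow{f} Z$, which holds by hypothesis on $f$. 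Symmetrically, $\Psi(g)$ is quasi-continuous because the defining data of a quasi-continuous $g \colon X \to F(Y,Z)$ unwinds to a quasi-continuous adjoint $X \wedge Y \to Z$.

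The main step is to show $\Phi$ (and $\Psi$) are themselves quasi-continuous between the two function spaces. By the adjunction-style definition of the quasi-topology on $F(-,-)$, a map $\beta\colon C \to F(X \wedge Y, Z)$ is quasi-continuous iff its adjoint $C \wedge (X \wedge Y) \to Z$ is quasi-continuous, while $\gamma \colon C \to F(X, F(Y,Z))$ is quasi-continuous iff the iterated adjoint $(C \wedge X) \wedge Y \to Z$ is quasi-continuous. Appealing to associativity of the smash product in the category of pointed quasi-topological spaces (which is clean in this setting since the quasi-topological exponential law does not require the compactly generated replacement used in $\mathbf{Top}$), these two conditions agree, so $\Phi$ induces a bijection $Q(C, F(X \wedge Y, Z)) \to Q(C, F(X, F(Y,Z)))$ for every compact Hausdorff $C$, and similarly for $\Psi$.

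The hard part is the bookkeeping that sets this up cleanly: verifying that the adjunction-defined quasi-topology on $F(-,-)$ satisfies all four axioms of a quasi-topology, and that smash associativity holds without any point-set caveat. Once the adjunction-style quasi-topology is in hand and the three-fold smash $C \wedge X \wedge Y$ is unambiguous, the bijection $\Phi$ is essentially a tautological reindexing of the condition defining quasi-continuity, so both $\Phi$ and $\Psi$ are quasi-continuous and $F(X \wedge Y, Z) \cong F(X, F(Y,Z))$ as quasi-topological spaces.
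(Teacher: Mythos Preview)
Your proof is correct and follows essentially the same approach as the paper: define the currying map $\Phi$ and its inverse $\Psi$ exactly as you do, observe they are mutual inverses, and check quasi-continuity. The paper's version is considerably terser---it simply asserts $\alpha$ is quasi-continuous ``since $f$ is quasi-continuous'' and similarly for $\beta$---whereas you supply the careful bookkeeping (the adjunction-style quasi-topology on function spaces and smash associativity) that actually justifies those claims.
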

\begin{proof}
We define $\alpha \colon F(X \wedge Y, Z) \rightarrow F(X, F(Y,Z))$ by 
\[((\alpha(f))(x))(y) = f(x \wedge y),\] 
where $f \in F(X \wedge Y, Z)$, $x \in X$ and $y \in Y$. 
Then $\alpha$ is quasi-continuous since $f$ is quasi-continuous. 

Define $\beta \colon F(X, F(Y,Z)) \rightarrow F(X \wedge Y, Z)$ by 
\[(\beta(g))(x \wedge y)=(g(x))(y),\]
where $g \in F(X, F(Y,Z))$, $x \in X$ and $y \in Y$.
So $\beta$ is quasi-continous since $g$ is quasi-continuous. 

Finally $\alpha$ and $\beta$ are inverses, so we obtain a quasi-homeomorphism.
\end{proof}

\begin{corollary}\label{sigmaandomegaadjointinqts}
$\Sigma_{\text{top}}$ and $\Omega$ are adjoints in the category of quasi-topological spaces. That is 
\[F(\Sigma_{\text{top}}X,Y) ~ \text{and} ~ F(X, \Omega Y)\]
are quasi-homeomorphic. 
\end{corollary}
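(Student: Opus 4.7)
The plan is to derive this directly from Proposition~\ref{wedgesarefunctions} by choosing the factors appropriately and matching notation, so the work reduces to two clerical identifications.

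First I would invoke Proposition~\ref{wedgesarefunctions} with the first smash factor being $X$ and the second being $S^1$, together with target $Z = Y$. This yields a quasi-homeomorphism
\[
F(X \wedge S^1, Y) \;\cong\; F\bigl(X, F(S^1, Y)\bigr).
\]
The right-hand side, by the definition of $\Omega$ given just before the proposition, is exactly $F(X, \Omega Y)$, since elements of $\Omega Y$ are precisely the (basepoint-preserving) quasi-continuous maps $S^1 \to Y$. So the identification on the target side of the adjunction is immediate.

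Next I would identify the left-hand side with $F(\Sigma_{\text{top}} X, Y)$. Since $\Sigma_{\text{top}} X = S^1 \wedge X$, this requires a natural quasi-homeomorphism $X \wedge S^1 \cong S^1 \wedge X$ (coordinate swap). I would verify this by checking that the swap map $x \wedge s \mapsto s \wedge x$ sends quasi-continuous maps to quasi-continuous maps in both directions, which is routine from the defining axioms of a quasi-topology applied to the projections and the identification of the basepoint. Composing this quasi-homeomorphism with the one from Proposition~\ref{wedgesarefunctions} yields the desired quasi-homeomorphism
\[
F(\Sigma_{\text{top}} X, Y) \;\cong\; F(X, \Omega Y).
\]

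The only subtle point, and the one I would be most careful about, is that Proposition~\ref{wedgesarefunctions} is stated for the unpointed function object $F(-,-)$, while $\Omega Y$ as defined here requires basepoint preservation. I would therefore verify that the adjunction bijection $\alpha,\beta$ built in the proof of Proposition~\ref{wedgesarefunctions} restricts to the pointed setting: if $f \colon X \wedge Y \to Z$ is basepoint-preserving then $\alpha(f)(x) \colon Y \to Z$ sends the basepoint of $Y$ to the basepoint of $Z$ and $\alpha(f)$ itself sends the basepoint of $X$ to the constant basepoint map. Both checks are straightforward from the collapsing relation $x \wedge \ast = \ast \wedge y = \ast$ defining the smash, so no real obstacle arises and the corollary follows.
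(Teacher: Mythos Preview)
Your proposal is correct and follows essentially the same route as the paper: specialize Proposition~\ref{wedgesarefunctions} with $S^1$ as one smash factor and identify the two sides with $F(\Sigma_{\text{top}}X,Y)$ and $F(X,\Omega Y)$. You are in fact more careful than the paper's two-line argument, which simply records these identifications as equalities and does not comment on the swap $S^1 \wedge X \cong X \wedge S^1$ or the basepoint-preservation check.
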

\begin{proof}
This follows from Proposition~\ref{wedgesarefunctions} since 

\[F(\Sigma_{\text{top}}X,Y) = F(X \wedge S^1, Y),\]
and
\[F(X, \Omega Y) = F(X, F(S^1, Y)).\]
\end{proof}

Let $A,B$ be $C^\ast$-algebras. 
Further by work of Dardalat-Meyer~\cite{DM12} we can define a quasi topology on $\text{Asy}(A,B)$, the set of asymptotic morphisms from $A$ to $B$. 
We define the set of quasi-continuous maps from a compact Hausdorff space $Y$ to $\text{Asy}(A,B)$ to be the $\text{Asy}(A, C(Y,B))$, mentioned in~\cite{DM12}. That is, more precisely we have 
\begin{definition}\label{quasi-topology on Asy}
For a compact Hausdorff space $Y$, a map $h \colon Y \rightarrow \text{Asy}(A,B)$ is quasi-continuous when for each $t \in [1, \infty)$ the map $\widetilde{h_t}(a) \colon Y \rightarrow B$ defined by $$\widetilde{h_t}(a)(y) = h(y)_t(a),$$ is continuous. 
\end{definition}
Now we check that this is a quasi-topology. 

\begin{proposition}
The set of asymptotic morphisms from $A$ to $B$, $\text{Asy}(A,B)$, is a quasi-topological space when equipped with the above quasi-topology.
\end{proposition}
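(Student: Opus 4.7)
The plan is to verify each of the four axioms of a quasi-topology in turn. The key observation is that Definition~\ref{quasi-topology on Asy} reduces quasi-continuity of $h\colon Y\to\operatorname{Asy}(A,B)$ to genuine continuity of the maps $\widetilde{h_t}(a)\colon Y\to B$ for every $t\in[1,\infty)$ and $a\in A$, where $B$ carries its norm topology. Once one notes the naturality identity
\[
\widetilde{(h\circ f)_t}(a)(y)\;=\;h(f(y))_t(a)\;=\;\widetilde{h_t}(a)\bigl(f(y)\bigr),
\]
each axiom in the definition of a quasi-topology becomes a standard statement about continuous maps into the topological space $B$.

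For the constant-map axiom, if $h(y)=\varphi$ for every $y\in C$, then $\widetilde{h_t}(a)(y)=\varphi_t(a)$ is a constant map $C\to B$ and hence continuous. For the pullback axiom, if $f\colon C_1\to C_2$ is continuous and $h\in Q(C_2,\operatorname{Asy}(A,B))$, then by the displayed identity above $\widetilde{(hf)_t}(a)=\widetilde{h_t}(a)\circ f$ is a composition of continuous maps, so $hf\in Q(C_1,\operatorname{Asy}(A,B))$. For the disjoint-union axiom, a map $C_1\amalg C_2\to B$ is continuous if and only if its restrictions to $C_1$ and $C_2$ are continuous, and applying this to $\widetilde{h_t}(a)$ for each $t$ and $a$ gives exactly the required equivalence.

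The main obstacle is the surjection axiom: from quasi-continuity of $h\circ f$ we must deduce quasi-continuity of $h$, i.e.\ from continuity of $\widetilde{h_t}(a)\circ f\colon C_1\to B$ conclude continuity of $\widetilde{h_t}(a)\colon C_2\to B$. This is where the compact-Hausdorff hypothesis is essential: a continuous surjection $f\colon C_1\to C_2$ between compact Hausdorff spaces is automatically closed (the continuous image of a closed, hence compact, set is compact, and compact subsets of a Hausdorff space are closed), and a continuous closed surjection is a quotient map. Therefore continuity of $\widetilde{h_t}(a)\circ f$ implies continuity of $\widetilde{h_t}(a)$ by the universal property of the quotient topology, completing the verification. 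Assembling the four checks shows $\operatorname{Asy}(A,B)$ with the collection of quasi-continuous maps described in Definition~\ref{quasi-topology on Asy} is a quasi-topological space.
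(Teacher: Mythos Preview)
Your proposal is correct and follows the same reduction as the paper: both translate quasi-continuity of $h$ into ordinary continuity of each map $\widetilde{h_t}(a)\colon Y\to B$ via the identity $\widetilde{(hf)_t}(a)=\widetilde{h_t}(a)\circ f$, and then check the four axioms of a quasi-topology one by one. For the constant-map, pullback, and disjoint-union axioms your arguments are essentially identical to those in the paper.

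The one place your treatment differs is the surjection axiom. The paper's argument there is brief to the point of being unclear: after recording the identity $\widetilde{gf}_t(a)=\widetilde{g}_t(a)\circ f$ it specialises to $f=\mathrm{id}$, which does not by itself establish the axiom for an arbitrary surjection. You instead supply the substantive point the paper leaves implicit, namely that a continuous surjection between compact Hausdorff spaces is automatically a closed map and hence a quotient map, so continuity of $\widetilde{h_t}(a)\circ f$ forces continuity of $\widetilde{h_t}(a)$. This is exactly the right justification, and it makes your verification of the fourth axiom more complete than what appears in the paper.
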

\begin{proof}
We must check the axioms. 
Let $c \colon Y \rightarrow \text{Asy}(A,B)$ be constant. Then for $y \in Y$, $c(y) =f_t \colon A \dashrightarrow B$ for a fixed $f$. Then we need to show that $c$ is quasi-continuous. That is to show for each map $c$, the map $\widetilde{c_t}(a) \colon Y \rightarrow B$ is continuous.
Then we define this map for $a \in A, y \in Y$ by
\[\widetilde{c_t}(a)(y) = c(y)_t(a) = f_t(a),\]
which is continuous as a function of $Y$ and hence $c$ is continuous.

Now let $f \colon Y_1 \rightarrow Y_2$ be a map of compact Hausdorff spaces and let $g \colon Y_2 \rightarrow \text{Asy}(A,B)$ be quasi-continuous. Then we want to show that $g f \colon Y_1 \rightarrow \text{Asy}(A,B)$ is quasi-continuous. That is, we need to show that $\widetilde{gf}_t(a) \colon Y_1  \rightarrow B$ is continuous. Now $f$ is continuous and since $g$ is quasi-continuous, we have each $\widetilde{g}_t(a) \colon Y_2 \rightarrow B$ is continuous and 
\[ \widetilde{g}_t(a)(y) = g(y)_t(a).\]
Now, 
\[\widetilde{gf}_t(a)(y) = \widetilde{g}_t(a)f(y),\]
which is continuous in $Y$ so is $\widetilde{gf}_t(a)$ is continuous, yielding that $gf$ is quasi-continuous. 

Let $Y = Y_1 \amalg Y_2$ of compact Hausdorff spaces. Then we need to show that $g \colon Y \rightarrow \text{Asy}(A,B)$ is quasi-continuous if and only if $g|_{Y_i}$ is quasi-continuous for $i=1,2$. 
Suppose $g \colon Y \rightarrow \text{Asy}(A,B)$ is quasi-continuous. Then map $\widetilde{g_t}(a) \colon Y \rightarrow B$ defined by $$\widetilde{g_t}(a)(y) = g(y)_t(a),$$ is continuous.
Now by properties of continuous functions we know that the restriction of a continuous function is continuous. 
Suppose that $g|_{Y_i}$ is quasi-continuous, then ${(\widetilde{g}|_{Y_i})}_t(a) \colon Y_i \rightarrow B$ is continuous. Then by properties of continuous functions we know that if the restrictions are continuous then the map of the disjoint uniion will be continuous. 

Finally, we need to check for every surjective map $f \colon Y_1 \rightarrow Y_2$ of compact Hausdorff spaces that $g \colon Y_2 \rightarrow \text{Asy}(A,B)$ is quasi-continuous if $g f \colon Y_1 \rightarrow \text{Asy}(A,B)$ is quasi-continuous. 
Then for a particular map $f$ and by the above argument
\[\widetilde{gf}_t(a)(y) = \widetilde{g}_t(a)f(y),\]
is continuous. 
Let $f \colon Y_1 \rightarrow Y_2$ be the identity, then 
\[\widetilde{gf}_t(a)(y) = \widetilde{g}_t(a)f(y) =\widetilde{g}_t(a),\]
and hence $\widetilde{g}_t(a)$ is continuous and the result follows. 
So we do indeed have a quasi-topology on $\text{Asy}(A,B)$ defined as above.
\end{proof}

For a quasi-topological space $X$, let $X_+$ denote the space with a basepoint. 

\begin{proposition}
The quasi-topological spaces $\Omega\text{Asy}(A,B)$ and $\text{Asy}(A, \Sigma B)$ are quasi-homeomorphic. 
\end{proposition}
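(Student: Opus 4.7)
The plan is to construct mutually inverse quasi-continuous maps
\[\alpha\colon \Omega\text{Asy}(A,B)\longrightarrow \text{Asy}(A,\Sigma B), \qquad \beta\colon \text{Asy}(A,\Sigma B)\longrightarrow \Omega\text{Asy}(A,B),\]
both given by the swap-of-variables formula $\alpha(\mu)_t(a)(s) = \mu(s)_t(a)$ and $\beta(\varphi)(s)_t(a) = \varphi_t(a)(s)$ for $\mu\in\Omega\text{Asy}(A,B)$, $\varphi\in\text{Asy}(A,\Sigma B)$, $a\in A$, $t\in[1,\infty)$, and $s\in S^1$. I would first verify well-definedness on the set level. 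By Definition~\ref{quasi-topology on Asy}, a loop $\mu$ is a basepoint-preserving map $S^1\to\text{Asy}(A,B)$ such that for every $t$ and every $a$ the function $s\mapsto \mu(s)_t(a)$ is continuous $S^1\to B$ and vanishes at the basepoint, so it defines an element of $\Sigma B$. Conversely, for $\varphi\in\text{Asy}(A,\Sigma B)$ and fixed $s\in S^1$, evaluation $\text{ev}_s\colon \Sigma B\to B$ is a graded $\ast$-homomorphism, hence $\beta(\varphi)(s) = \text{ev}_s\circ\varphi$ is an asymptotic morphism in the pointwise sense.

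Next I would verify that $\alpha(\mu)$ satisfies the five axioms of a graded asymptotic morphism. Under the supremum norm on $\Sigma B$, each axiom becomes a uniform-in-$s$ statement for the family $\mu(s)$; for example, multiplicativity reads
\[\lim_{t\to\infty}\sup_{s\in S^1}\bigl\|\mu(s)_t(ab) - \mu(s)_t(a)\mu(s)_t(b)\bigr\|_B = 0,\]
and similarly for linearity, involution, grading compatibility, and the continuity-boundedness in $t$. This uniform convergence is precisely what the Dardalat-Meyer quasi-topology of Definition~\ref{quasi-topology on Asy} encodes: quasi-continuous maps from a compact Hausdorff space $Y$ into $\text{Asy}(A,B)$ coincide with elements of $\text{Asy}(A,C(Y,B))$, and for $Y=S^1$ this collapses under basepoint preservation to $\text{Asy}(A,\Sigma B)$. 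The axioms for $\beta(\varphi)(s)$ are immediate by evaluation at a single point.

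Once well-definedness is in place, the symmetric formula makes $\alpha$ and $\beta$ mutual inverses, so we have a bijection of sets. To upgrade this to a quasi-homeomorphism I would check quasi-continuity of both maps using the adjunction of Corollary~\ref{sigmaandomegaadjointinqts}. Given a compact Hausdorff test space $Y$ and a quasi-continuous $h\colon Y\to\Omega\text{Asy}(A,B)$, the mapping-space adjunction lets us unwind $h$ into a quasi-continuous map $h^\sharp\colon Y\wedge S^1\to\text{Asy}(A,B)$, and Definition~\ref{quasi-topology on Asy} identifies this with an asymptotic morphism $A\dashrightarrow C(Y\wedge S^1, B)$; restricting its target to the basepoint-preserving factor $\Sigma B$ and re-adjointing recovers exactly the datum of a quasi-continuous map $Y\to\text{Asy}(A,\Sigma B)$, which is $\alpha\circ h$. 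The symmetric argument handles $\beta$.

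The main obstacle is arranging the uniform-in-$s$ version of the asymptotic axioms, since pointwise asymptoticity of each loop point $\mu(s)$ is strictly weaker than asymptoticity in the $C^\ast$-algebra $\Sigma B$. This is exactly where the compactness of $S^1$ and the specific Dardalat-Meyer description of the quasi-topology on $\text{Asy}(A,B)$ become essential; without them the swap of variables would produce only a family of pointwise asymptotic morphisms rather than a genuine element of $\text{Asy}(A,\Sigma B)$, and the proof would collapse.
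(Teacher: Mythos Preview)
Your proposal is correct and rests on the same key identification the paper uses, namely the Dardalat--Meyer equality $Q(Y,\text{Asy}(A,B)) = \text{Asy}(A,C(Y,B))$ specialized to $Y=S^1$; the paper simply applies this definitional identity in three lines, writing $\Omega\text{Asy}(A,B) = Q(S^1,\text{Asy}(A,B))_+ = \text{Asy}(A,C(S^1,B)_+) = \text{Asy}(A,\Sigma B)$, without ever naming explicit inverse maps. Your version unpacks this into the swap-of-variables bijection $\alpha,\beta$ and carefully isolates the uniform-in-$s$ asymptotic axioms as the nontrivial content, which is a genuine clarification of what the Dardalat--Meyer definition is doing behind the scenes; but since the paper takes that identification as the \emph{definition} of the quasi-topology on $\text{Asy}(A,B)$, all of your well-definedness and quasi-continuity checks are absorbed into that one citation, and the adjunction of Corollary~\ref{sigmaandomegaadjointinqts} is not needed.
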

\begin{proof}
By the definition of a quasi-topology, we know that
\[\Omega \text{Asy}(A,B) \colon = Q(S^1, \text{Asy}(A,B))_+.\]

Then by the definition~\ref{quasi-topology on Asy}, we have that 
\[Q(Y, \text{Asy}(A,B))_+ = \text{Asy}(A, C(Y,B)_+),\]
and then that 
\begin{align*}
\Omega \text{Asy}(A,B) 
& = Q(S^1, \text{Asy}(A,B))_+ \\
& = \text{Asy}(A, C(S^1,B)_+) \\
& = \text{Asy}(A, \Sigma B). \\
\end{align*}
\end{proof}

The above results hold in the case of graded asymptotic morphisms. 
\subsection{Group actions}
The following definition makes sense since a topological group can be viewed as a quasi-topological group. 

\begin{definition}
Let $G$ be a topological group acting on a quasi-topological space $X$. Then the group action is called \emph{quasi-continuous} if the map $G \times X \rightarrow X$ is quasi-continuous. If this is the case we say that the set $X$ is a \emph{quasi $G$-space}.
\end{definition}

\begin{definition}
A map $f \colon X \rightarrow Y$ of quasi $G$-spaces is a \emph{quasi $G$-map} if it is $G$-equivariant. That is, for all $g \in G$, we have 
\[f(gx) = g(f(x)).\]
\end{definition}

Now we consider basepoint preserving group actions. 

\begin{proposition}\label{groupactionpreservebpt}
Let $G$ and $H$ be groups. 
Let $X$ be a quasi $G$-space, $Y$ a quasi $H$-space where the group actions preserve the basepoints of both $X$ and $Y$. Then there are basepoint preserving actions of $G \times H$ on $X \times Y$, $X \vee Y$ and $X \wedge Y$. These actions are defined in the obvious way.
\end{proposition}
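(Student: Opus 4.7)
The plan is to write down the three actions explicitly and then verify in each case that they are well-defined, basepoint-preserving, and quasi-continuous. Denote the basepoints of $X$ and $Y$ by $x_0$ and $y_0$ respectively.

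First I would define the $G \times H$-action on $X \times Y$ componentwise, namely
\[(g,h)\cdot(x,y) = (gx, hy).\]
The identity and associativity axioms follow immediately from the fact that the $G$-action on $X$ and the $H$-action on $Y$ satisfy them. Since $(g,h)\cdot(x_0,y_0) = (gx_0, hy_0) = (x_0,y_0)$, the basepoint is preserved. For quasi-continuity, I would use the universal property of the product quasi-topology: given a compact Hausdorff space $C$ and a quasi-continuous map $C \to (G\times H)\times (X\times Y)$, compose with the obvious shuffle to a map into $(G\times X)\times(H\times Y)$, and then apply the given quasi-continuity of each factor action coordinatewise.

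For $X \vee Y = (X \sqcup Y)/(x_0 \sim y_0)$, I would let $G \times H$ act by
\[(g,h)\cdot x = gx \quad (x \in X), \qquad (g,h)\cdot y = hy \quad (y \in Y).\]
This is well-defined on the wedge precisely because both actions preserve basepoints: $(g,h)\cdot x_0 = gx_0 = x_0 = y_0 = hy_0 = (g,h)\cdot y_0$. Quasi-continuity follows from the disjoint union axiom for quasi-topologies together with the surjection axiom applied to the quotient map $X \sqcup Y \twoheadrightarrow X \vee Y$: the restriction of the proposed action to each summand is the original quasi-continuous action, so the induced map on the disjoint union is quasi-continuous, and hence descends quasi-continuously to the quotient.

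For $X \wedge Y = (X \times Y)/(X \vee Y)$, I would simply take the action induced from $X \times Y$ through the quotient map $q \colon X \times Y \to X \wedge Y$. The action on $X \times Y$ preserves the subspace $X \vee Y$ (which amounts to the computation $(g,h)\cdot(x,y_0) = (gx, y_0)$ and similarly on $\{x_0\}\times Y$, using basepoint preservation), so it descends to a well-defined action on the quotient. Quasi-continuity then follows from the surjective-map axiom applied to $\mathrm{id}\times q \colon (G\times H)\times (X\times Y) \to (G\times H)\times (X\wedge Y)$, together with the quasi-continuity of the action on $X\times Y$ already established.

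The main obstacle is the bookkeeping for quasi-continuity on the wedge and smash: one has to assemble the disjoint-union axiom and the surjective-map axiom in the right order, and in particular verify that the maps $\mathrm{id}\times q$ and the quotient map $X \sqcup Y \to X \vee Y$ sit inside the class of surjections to which the axiom applies. Once those applications are justified, the verifications of the group-action identities and basepoint preservation are essentially formal consequences of the hypothesis that the original actions preserve basepoints.
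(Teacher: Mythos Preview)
Your proposal is correct and is precisely what the paper means by ``these actions are defined in the obvious way'': the paper gives no proof of this proposition at all, so your explicit verification of well-definedness, basepoint preservation, and quasi-continuity for each of the three constructions supplies exactly the details the paper suppresses. The only point worth flagging is the one you already identify as the main obstacle: the surjection axiom in the definition of a quasi-topology is stated for surjections between \emph{compact Hausdorff} spaces, not for arbitrary quotient maps of quasi-topological spaces, so to push the action down to $X\vee Y$ and $X\wedge Y$ you really want the universal property of the quotient quasi-topology (a map out of the quotient is quasi-continuous iff its precomposition with the quotient map is) rather than the axiom itself; once that is in hand your argument goes through verbatim.
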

%
%
%

We now need the notion of a balanced smash product. 
\begin{definition}
Let $X$ be a right quasi $G$-space and $Y$ a left quasi $G$-space, then we can from the \emph{balanced smash product} $X \wedge_G Y$, which is the quotient space $X \wedge Y/ \sim_G$ where 
\[(xg \wedge y) \sim_G (x \wedge g^{-1}y) \Leftrightarrow (x \wedge y) \sim_G (xg \wedge g^{-1}y),\]
for all $g \in G$.
\end{definition}

Let the equivalence class of $x \wedge y$ be denoted by $x \wedge_G y$. Now using these we can construct a left quasi $G$-space. 

Let $G$ be a topological group and $H$ a subgroup. Then let $X$ be a based left quasi $H$-space where $G$ acts by preserving the basepoint. Let $G_+ = G \amalg \{\ast\}$, then we can construct the right quasi $G$-space denoted $G_+ \wedge_H X$ using the above equivalence classes. Additionally we can actually define a left quasi $G$-action on this space by the following map: 
\[(f, g \wedge_H x) \mapsto fg \wedge_H x,\]
for all $ f \in G$. 

To prove this is well-defined action it is a formality of using the fact that $H$ is a subgroup of $G$. 

Let $X$ and $Y$ be based quasi $G$-spaces. Then let $Q_G(X,Y)$ denote the set of basepoint preserving quasi $G$-maps. 
Then we have the following result:

\begin{proposition}\label{g-maps=h-maps}
Let $H$ be a subgroup of a group $G$.
Let $X$ be a left quasi $H$-space and $Y$ a left quasi $G$-space. There there is a natural bijection 
\[Q_H(X,Y) \longleftrightarrow Q_G(G_+ \wedge_H X, Y).\] 
\end{proposition}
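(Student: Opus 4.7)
The plan is to exhibit the bijection by the standard induction-restriction adjunction, adapted to the quasi-topological setting, and verify both the algebraic axioms and the quasi-continuity conditions at each step.

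First I would define the forward map $\Phi \colon Q_H(X,Y) \to Q_G(G_+ \wedge_H X, Y)$. Given an $H$-equivariant basepoint preserving quasi-continuous map $\phi \colon X \to Y$, I would set
\[\Phi(\phi)(g \wedge_H x) = g \cdot \phi(x),\]
where the action on the right is the given left $G$-action on $Y$. To see this is well-defined on equivalence classes, I would check that if $(gh \wedge_H x) \sim_H (g \wedge_H h x)$ for $h \in H$, then
\[gh \cdot \phi(x) = g \cdot (h \cdot \phi(x)) = g \cdot \phi(h x),\]
using $H$-equivariance of $\phi$. Basepoint preservation follows because $\phi$ preserves basepoints and the $G$-action on $Y$ preserves the basepoint. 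The $G$-equivariance of $\Phi(\phi)$ is immediate from the definition of the left $G$-action on $G_+ \wedge_H X$ given earlier in the section.

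Next I would define the backward map $\Psi \colon Q_G(G_+ \wedge_H X, Y) \to Q_H(X,Y)$. For a $G$-equivariant quasi-continuous basepoint preserving map $F \colon G_+ \wedge_H X \to Y$, set
\[\Psi(F)(x) = F(e \wedge_H x),\]
where $e$ denotes the identity of $G$. Since $H \subseteq G$ and $F$ is $G$-equivariant, $\Psi(F)$ is automatically $H$-equivariant: for $h \in H$,
\[\Psi(F)(hx) = F(e \wedge_H hx) = F(h \wedge_H x) = h \cdot F(e \wedge_H x) = h \cdot \Psi(F)(x).\]
Then I would check $\Phi$ and $\Psi$ are mutual inverses by a direct computation: $\Psi(\Phi(\phi))(x) = \Phi(\phi)(e \wedge_H x) = e \cdot \phi(x) = \phi(x)$, and conversely $\Phi(\Psi(F))(g \wedge_H x) = g \cdot F(e \wedge_H x) = F(g \wedge_H x)$ by $G$-equivariance of $F$.

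The main technical obstacle is verifying that both $\Phi(\phi)$ and $\Psi(F)$ are quasi-continuous, since quasi-continuity is tested against arbitrary compact Hausdorff probe spaces $C$ rather than by open sets. For $\Psi(F)$, I would note that $\Psi(F)$ factors as the composite of the map $x \mapsto e \wedge_H x$ (which is quasi-continuous as the composition of the inclusion $X \hookrightarrow G_+ \wedge X$ with the quotient) and the quasi-continuous map $F$. For $\Phi(\phi)$, I would use the quotient characterisation of the quasi-topology on $G_+ \wedge_H X$ together with the fourth axiom of a quasi-topology, which says that quasi-continuity can be tested after a surjection: a map out of $G_+ \wedge_H X$ is quasi-continuous provided its pullback to $G_+ \wedge X$ is, and this pullback is the map $(g,x) \mapsto g \cdot \phi(x)$, which is quasi-continuous since it is the composition of $\mathrm{id} \wedge \phi$ with the quasi-continuous $G$-action map $G_+ \wedge Y \to Y$ (quasi-continuity of the action being precisely the hypothesis that $Y$ is a quasi $G$-space). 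Naturality of the bijection in $X$ and $Y$ is then a routine diagram chase.
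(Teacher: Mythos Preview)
Your proof is correct and follows essentially the same approach as the paper: both construct the explicit adjunction maps $\phi \mapsto (g \wedge_H x \mapsto g\phi(x))$ and $F \mapsto (x \mapsto F(e \wedge_H x))$, verify well-definedness and equivariance, and observe they are mutual inverses. In fact you go further than the paper by explicitly checking quasi-continuity of $\Phi(\phi)$ and $\Psi(F)$ via the quotient and action maps, a point the paper's proof leaves implicit.
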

\begin{proof}
We define $\alpha \colon Q_H(X,Y) \rightarrow Q_G(G_+ \wedge_H X, Y)$.
Let $f \in Q_H(X,Y)$ and $g \wedge_H x \in G_+ \wedge_H X$, then we define 
\[\alpha(f)(g \wedge_H x) = gf(x),\]
and we then need to check that $\alpha$ is well-defined and also $G$-equivariant. Since $(g \wedge_H x) \sim (gh \wedge_H h^{-1} x)$, then 
\begin{align*}
\alpha(f)(gh \wedge_H h^{-1} x) 
& = ghf(h^{-1}x)  = ghh^{-1}f(x) ~ ~\phantom{sh} \text{since f is a quasi H-map} \\
& = gf(x)  = \alpha(f)(g \wedge_H x).\\
\end{align*} 
Then $\alpha(f)$ is $G$-equivariant since 
\[\alpha(f)(g'g \wedge_H x) = g' g f(x) = g'(gf(x)) = g' \alpha(f)(g \wedge_H x),\]
for all $g' \in G$.

Now define $\beta \colon Q_G(G_+ \wedge_H X, Y) \rightarrow Q_H(X,Y)$. Let $k \in Q_G(G_+ \wedge_H X, Y)$, $x \in X$, then
\[ \beta(k)(x) = k(e \wedge_H x),\]
where $e$ denotes the identity in $G$. 
Then $\beta$ is $H$-equivariant since 
\begin{align*}
\beta(k)(hx) 
& = k(e \wedge_H hx) \\
&= k(h^{-1}h \wedge_H hx) \\
& = k(h \wedge_H x) \phantom{s} \text{by equivalence relations}\\
& = hk(e \wedge_H x) \phantom{s} \text{as} ~ k ~ \text{is a}~ H\text{-map}\\
& = h\beta(k)x.
\end{align*}
Then is is clear that $\alpha$ and $\beta$ are inverse maps, and both are natural in $X$ and $Y$, so the result follows.
\end{proof}

\subsection{Quasi-Orthogonal sequences}
Let $\mathscr{O}$ be the \emph{category of finite dimensional real Euclidean inner product spaces and linear isometric isomorphisms} where we have objects to be the set 
\[\text{obj}(\mathscr{O}) =\{\mathbb{R}^n ~ | ~ n=0,1, \ldots \}\]
and morphisms are 
\begin{equation*}
\mathscr{O}(A,B) = 
\begin{cases}
O(n),& \text{if}\; A=B=\mathbb{R}^n \\
\emptyset ,& \text{otherwise}.
\end{cases}
\end{equation*}
It should be noted that this is a small category since the collection of objects is a set. 

Let $\mathscr{T}$ denote the \emph{category of quasi-topological spaces with basepoints and quasi-continuous maps}. So $\text{obj}(\mathscr{T})$ is the collection of quasi-topological spaces with basepoints and the morphisms $\mathscr{T}(X,Y)$ are the set of basepoint preserving quasi-continuous maps from $X$ to $Y$. 

Then we can obtain the product category $\mathscr{T} \times \mathscr{T}$ where 
$\text{obj}(\mathscr{T} \times \mathscr{T})$ are pairs $(X,Y)$ of quasi-topological spaces with basepoints, and morphisms are 
\[(\mathscr{T} \times \mathscr{T})((X,Y)(Z,W)) = \{ (f,g) ~ | ~ f \in \mathscr{T}(X,Z), g \in \mathscr{T}(Y,W)\} .\]

\begin{proposition}
The smash product $\wedge \colon \mathscr{T} \times \mathscr{T} \rightarrow \mathscr{T}$ of quasi-topological spaces is a functor.
\end{proposition}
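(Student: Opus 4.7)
The plan is to verify the three functor axioms directly from the construction of the smash product as a quotient of the Cartesian product. Recall that $X \wedge Y = (X \times Y)/(X \vee Y)$, with basepoint the collapsed image of $X \vee Y$, where $X \times Y$ carries the quasi-topology whose quasi-continuous maps from a compact Hausdorff space $C$ are those whose two coordinate projections are quasi-continuous into $X$ and $Y$ respectively, and where the quotient $X \wedge Y$ inherits the natural quotient quasi-topology for which a map $h \colon C \to X \wedge Y$ is quasi-continuous exactly when it is realised as $q \circ \tilde h$ for some quasi-continuous $\tilde h \colon C \to X \times Y$ and the canonical surjection $q \colon X \times Y \to X \wedge Y$.

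First I would check that $\wedge$ is well-defined on morphisms. Given $(f,g) \in (\mathscr{T} \times \mathscr{T})((X,Y),(Z,W))$, the map $f \times g \colon X \times Y \to Z \times W$ is quasi-continuous by the universal property of the product quasi-topology, since its coordinate projections agree with $f$ and $g$ composed with the respective projections out of $X \times Y$. Because $f$ and $g$ preserve basepoints, $f \times g$ sends $X \vee Y$ into $Z \vee W$, so it descends uniquely to a basepoint-preserving map $f \wedge g \colon X \wedge Y \to Z \wedge W$. Quasi-continuity of $f \wedge g$ follows from the surjection axiom of the definition of a quasi-topology: for any quasi-continuous $h = q \circ \tilde h \colon C \to X \wedge Y$, the composite $(f \wedge g) \circ h$ equals $q' \circ (f \times g) \circ \tilde h$, which is the image under the quotient of a quasi-continuous map, hence quasi-continuous.

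Next I would verify the two functoriality axioms. For identities, $(\mathrm{id}_X, \mathrm{id}_Y)$ induces $\mathrm{id}_X \times \mathrm{id}_Y = \mathrm{id}_{X \times Y}$ on the product, which descends to $\mathrm{id}_{X \wedge Y}$. For composition, given $(f,g) \colon (X,Y) \to (Z,W)$ and $(f',g') \colon (Z,W) \to (U,V)$, one has $(f' \times g') \circ (f \times g) = (f' \circ f) \times (g' \circ g)$ on the product, and passing to the quotient via its universal property gives $(f' \wedge g') \circ (f \wedge g) = (f' \circ f) \wedge (g' \circ g)$, as required.

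The main subtlety is the quasi-continuity of $f \wedge g$: one must be careful that the quotient quasi-topology on $X \wedge Y$ is exactly the one for which the surjection axiom of Spanier's definition forces descent of quasi-continuous maps, so that quasi-continuity of $q$ and of $f \times g$ together suffice. Once this is in hand, the remaining verifications are purely formal consequences of the functoriality of $\times$ and the universal property of quotients in the quasi-topological setting.
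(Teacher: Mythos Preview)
Your proof is correct, and it supplies considerably more than the paper does: in the paper this proposition is stated and immediately followed by a \qed{} with no argument at all. So there is no ``paper's approach'' to compare against beyond the implicit claim that the verification is routine.

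One small comment on your argument. Your characterisation of the quotient quasi-topology on $X \wedge Y$ --- that $h \colon C \to X \wedge Y$ is quasi-continuous precisely when it lifts globally to a quasi-continuous $\tilde h \colon C \to X \times Y$ --- is a workable definition, but checking that it satisfies Spanier's surjection axiom is not entirely automatic. A slightly cleaner route avoids this: take the quotient quasi-topology to be the one whose universal property says that a map \emph{out of} $X \wedge Y$ is quasi-continuous if and only if its precomposition with $q$ is. Then quasi-continuity of $f \wedge g$ follows immediately from the identity $(f \wedge g) \circ q_{X,Y} = q_{Z,W} \circ (f \times g)$ and the fact that the right-hand side is a composite of quasi-continuous maps. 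This is equivalent to what you do but sidesteps the need to describe $Q(C, X \wedge Y)$ explicitly. Either way, the identity and composition axioms are, as you say, purely formal.
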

\qed

The following definition of a quasi-orthogonal sequence is going to form part of the definition of a orthogonal quasi-spectrum. 

\begin{definition}
Let $\mathscr{O}$ and $\mathscr{T}$ be the categories defined above. Then we define the \emph{category of quasi orthogonal sequences} formed as the functor category $\mathscr{T}^\mathscr{O}$ with objects 
\[\text{obj}(\mathscr{T}^\mathscr{O}) = \{\text{functors} ~X \colon \mathscr{O} \rightarrow \mathscr{T} ~ | ~ X_n : = X(\mathbb{R}^n)\},\]
together with a left quasi-continuous basepoint preserving action of $O(n)$ on each $X_n$ for all $n\geq 0$,
and morphisms
\[\mathscr{T}^\mathscr{O}(X,Y) = \{ \varphi \colon X \rightarrow Y ~ |~ \varphi ~\text{is a natural transformation}\},\]
and such that a natural transformation is formed of sets of quasi-continuous basepoint preserving maps $\varphi_n \colon X_n \rightarrow Y_n$ that are $O(n)$-equivariant for $n \geq 0$, or equivalently that the map $\varphi_n$ commutes with the group action of $O(n)$ on $X_n$ and $Y_n$.
\end{definition}

A useful example of such a functor category will be the unit sequence coming from the orthogonal sequence defined below. 
Consider a based topological space $K$, then define the orthogonal sequence with $n$-space:
\begin{equation*}
(G_p K)_n =
\begin{cases}
O(n)_+ \wedge K, & \text{if} ~n=p \\
\{\ast\}, & \text{otherwise} \\
\end{cases}
\end{equation*}
Then the \emph{unit sequence} is when we just have the topological space $S^0$, given by the sequence 
\[G_0 S^0 = \{S^0, \ast, \ast, \ldots \}.\]

We also consider quasi-biorthogonal sequences since they will help us in defining our smash product structure. 

The category of \emph{quasi-biorthogonal sequences} is defined to be the category with objects 
\[\text{obj}(\mathscr{T}^{\mathscr{O}\times \mathscr{O}}) = \{X \colon \mathscr{O} \times \mathscr{O} \rightarrow \mathscr{T} ~ | ~ X ~ \text{is a functor} \},\]
together with a quasi-continuous basepoint preserving left-action of $O(m) \times O(n)$, 
and 
\[\mathscr{T}^{\mathscr{O}\times \mathscr{O}}(X,Y) = \{ \psi \colon X \rightarrow Y ~ |~ \psi ~\text{is a natural transformation}\},\]
formed of sets of quasi-continuous basepoint preserving maps $\psi_{m,n} \colon X_{m,n} \rightarrow Y_{m,n}$, where $X_{m,n} :=X(\mathbb{R}^m, \mathbb{R}^n)$, that are $O(m) \times O(n)$-equivariant for all $m,n\geq 0$.

Using this we can define the external smash product of two quasi-orthogonal sequence $X$ and $Y$. 

\begin{definition}
Define the \emph{external smash product} $X \overline{\wedge} Y$ to be the quasi-biorthogonal sequence given by the composition
\[ \mathscr{O} \times \mathscr{O} \xrightarrow{X \times Y} \mathscr{T} \times \mathscr{T} \xrightarrow{\wedge} \mathscr{T},\]
defined by 
\[(X \overline{\wedge} Y)_{m,n} = (X \overline{\wedge} Y)(\mathbb{R}^m, \mathbb{R}^n) = X(\mathbb{R}^m) \wedge Y(\mathbb{R}^n) = X_m \wedge Y_n.\]
Then by Proposition~\ref{groupactionpreservebpt}, the quasi-topological space $X_n \wedge Y_m$ has a quasi-$O(n) \times O(m)$-action. 
\end{definition}

For a general quasi-orthogonal sequence $X$ we can define a quasi-biorthogonal sequence $X \circ \oplus$ by: 
\[(X \circ \oplus)_{m,n} = (X \circ \oplus )(\mathbb{R}^m, \mathbb{R}^n) = X(\mathbb{R}^{m+n}) = X_{m+n}.\]

Now we can construct the tensor product of quasi-orthogonal sequences since the category $\mathscr{T}$ is complete and cocomplete. 

\begin{definition}
For quasi orthogonal sequence $X$ and $Y$ we define the tensor product of $X$ and $Y$ to be the quasi-orthogonal sequence 
\[(X \otimes Y)_n = \bigvee_{p+q=n} O(n)_+ \wedge_{O(p) \times O(q)} X_p \wedge Y_q,\]
where we define the $O(n)$-action on $(X \otimes Y)_n$ by acting on each wedge summand. 
\end{definition}

Then we can combine the external smash product and tensor product of quasi-orthogonal sequences as a natural bijection:

\begin{proposition}\label{externalandtensorproduct}
For quasi-orthogonal sequences $X,Y$ and $Z$, there is a natural bijection
\[\mathscr{T}^{\mathscr{O} \times \mathscr{O}}(X \overline{\wedge} Y, Z \circ \oplus) \longleftrightarrow \mathscr{T}^\mathscr{O}(X \otimes Y, Z).\] 
\end{proposition}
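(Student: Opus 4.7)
The plan is to peel back both sides one layer at a time and reduce the claimed bijection to an assembly of instances of Proposition~\ref{g-maps=h-maps}, together with the universal property of a wedge sum. To make this precise, I first unpack what a morphism on each side consists of level by level.

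A morphism on the right, $\varphi \colon X \otimes Y \to Z$ in $\mathscr{T}^{\mathscr{O}}$, is by definition a family of $O(n)$-equivariant basepoint preserving quasi-continuous maps
\[
\varphi_n \colon \bigvee_{p+q=n} O(n)_+ \wedge_{O(p) \times O(q)} X_p \wedge Y_q \longrightarrow Z_n,
\]
one for each $n \geq 0$. Since the wedge is the coproduct in $\mathscr{T}$, giving such a $\varphi_n$ is the same as giving, for each decomposition $p+q=n$, an $O(n)$-equivariant basepoint preserving map
\[
\varphi_{p,q} \colon O(n)_+ \wedge_{O(p) \times O(q)} X_p \wedge Y_q \longrightarrow Z_n.
\]
A morphism on the left, $\psi \colon X \overline{\wedge} Y \to Z \circ \oplus$ in $\mathscr{T}^{\mathscr{O} \times \mathscr{O}}$, is by definition a family of $O(p) \times O(q)$-equivariant basepoint preserving quasi-continuous maps
\[
\psi_{p,q} \colon X_p \wedge Y_q \longrightarrow Z_{p+q},
\]
where the $O(p) \times O(q)$-action on $Z_{p+q}$ is obtained by restriction along the block inclusion $O(p) \times O(q) \hookrightarrow O(p+q)$.

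The next step is to apply Proposition~\ref{g-maps=h-maps} with $G = O(n)$, $H = O(p) \times O(q)$ embedded in $G$ as the block diagonal subgroup, and with the based quasi $H$-space $X_p \wedge Y_q$ (whose $H$-action exists by Proposition~\ref{groupactionpreservebpt}) and the based quasi $G$-space $Z_n = Z_{p+q}$. The proposition supplies a natural bijection
\[
Q_{O(p)\times O(q)}\bigl(X_p \wedge Y_q,\, Z_{p+q}\bigr) \longleftrightarrow Q_{O(n)}\bigl(O(n)_+ \wedge_{O(p) \times O(q)} X_p \wedge Y_q,\, Z_n\bigr),
\]
which is exactly a correspondence $\psi_{p,q} \leftrightarrow \varphi_{p,q}$ between the components identified above. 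Assembling these bijections over all pairs $(p,q)$ and over all $n = p+q$ gives the desired bijection
\[
\mathscr{T}^{\mathscr{O} \times \mathscr{O}}(X \overline{\wedge} Y,\, Z \circ \oplus) \longleftrightarrow \mathscr{T}^{\mathscr{O}}(X \otimes Y,\, Z).
\]

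Finally I would verify naturality in $X$, $Y$ and $Z$; this is routine since the correspondence supplied by Proposition~\ref{g-maps=h-maps} is natural in both of its variables, and wedge sums are functorial in each summand. The main point to be careful about is the compatibility of equivariances: one has to check that the $O(n)$-equivariance of the assembled map out of the wedge decomposes summand by summand into the $O(n)$-equivariance on each $O(n)_+ \wedge_{O(p)\times O(q)} X_p \wedge Y_q$, and that under the bijection of Proposition~\ref{g-maps=h-maps} this translates precisely into equivariance for the block subgroup $O(p)\times O(q)$ rather than some other choice of embedding. This is the only subtlety; once it is settled, the bijection of Proposition~\ref{externalandtensorproduct} follows immediately.
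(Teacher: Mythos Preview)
Your proposal is correct and follows essentially the same approach as the paper: both arguments unpack a morphism on each side into its levelwise components, use the wedge as a coproduct to split the map $\varphi_n$ into pieces $\varphi_{p,q}$, and then invoke Proposition~\ref{g-maps=h-maps} with $G=O(n)$ and $H=O(p)\times O(q)$ to pass between the $O(n)$-equivariant maps out of $O(n)_+ \wedge_{O(p)\times O(q)} X_p \wedge Y_q$ and the $O(p)\times O(q)$-equivariant maps out of $X_p \wedge Y_q$. Your explicit remarks on naturality and on the block-diagonal embedding are a small elaboration beyond what the paper writes, but they do not constitute a different method.
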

\begin{proof}
Let $f \colon X \overline{\wedge} Y \rightarrow Z \circ \oplus$ be a natural transformation in the category of quasi-biorthogonal sequences. Then $f_{p,q}\colon X_p \wedge Y_q \rightarrow Z \circ \oplus$ is quasi $O(p) \times O(q)$-equivariant and then by proposition~\ref{g-maps=h-maps}, this corresponds to a quasi $O(n)$-equivariant map, with $n=p+q$
\[\overline{f}_{p,q} \colon O(n)_+ \wedge_{O(p) \times O(q)} X_p \wedge Y_q \rightarrow Z_n.\]
Now fixing $n$ and letting $p$ and $q$ vary, this allows us to obtain a quasi $O(n)$-equivariant map 
\[\overline{f}_n = \bigvee_{p+q=n} O(n)_+ \wedge_{O(p) \times O(q)} X_p \wedge Y_q \rightarrow Z_n,\]
which is a quasi-continuous basepoint preserving $O(n)$-equivariant map in $\mathscr{T}^{\mathscr{O}}$ from $X \otimes Y$ to $Z$.

Now we construct a map the other way. 
Let $g \in \mathscr{T}^\mathscr{O}(X \otimes Y, Z)$. Then $g$ is a wedge summand of basepoint preserving quasi-continuous $O(n)$-equivariant maps 
\[g_n \colon \bigvee_{p+q=n} O(n)_+ \wedge_{O(p) \times O(q)} X_p \wedge Y_q \rightarrow Z_n,\] 
for all $n \geq 0$.
Also, we can write that $g_n = \bigvee_{p+q=n} g_{p,q}$, where 
\[g_{p,q} \colon  O(n)_+ \wedge_{O(p) \times O(q)} X_p \wedge Y_q \rightarrow Z_n,\]
and by proposition~\ref{g-maps=h-maps}, we obtain a basepoint preserving quasi-continuous $O(p) \times O(q)$-equivariant map as required. 
\end{proof}

Let $G_0S^0$ be the unit quasi-orthogonal sequence \[(G_0 S^0)_n = (S^0, \ast,\ast,\ldots).\]

For details of the following see Chapter~4 of~\cite{Brothesis}.
\begin{proposition}
The category of quasi-orthogonal sequences forms a symmetric monoidal category $(\mathscr{T}^{\mathscr{O}}, \otimes, G_0S^0)$.
\end{proposition}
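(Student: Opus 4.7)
The plan is to exhibit $\otimes$ as a Day-convolution type construction built from the two symmetric monoidal structures $(\mathscr{O}, \oplus, 0)$ and $(\mathscr{T}, \wedge, S^0)$, and then transfer the symmetric monoidal data across the adjunction of Proposition~\ref{externalandtensorproduct}. First I would verify the unit isomorphism $X \otimes G_0 S^0 \cong X$ by direct expansion: in the wedge $\bigvee_{p+q=n} O(n)_+ \wedge_{O(p) \times O(q)} X_p \wedge (G_0 S^0)_q$ only the summand with $q=0$ contributes (the others having $(G_0 S^0)_q = \ast$), leaving $O(n)_+ \wedge_{O(n)\times O(0)} X_n \wedge S^0 \cong X_n$ as quasi $O(n)$-spaces. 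The left unit isomorphism follows by a symmetric calculation or, once the braiding is in hand, by conjugating the right unit.

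Next I would construct the associator. The cleanest route is to introduce the category $\mathscr{T}^{\mathscr{O}\times\mathscr{O}\times\mathscr{O}}$ of triorthogonal sequences with its evident external smash product $X \overline{\wedge} Y \overline{\wedge} Z$, and to upgrade Proposition~\ref{externalandtensorproduct} to a three-variable adjunction against restriction along the functor $\oplus\circ(\oplus\times\mathrm{id})$. Both $(X \otimes Y) \otimes Z$ and $X \otimes (Y \otimes Z)$ corepresent the same functor on $\mathscr{T}^{\mathscr{O}}$ via the two bracketings of $\oplus$, so associativity of $\oplus$ on $\mathscr{O}$ together with associativity of $\wedge$ on $\mathscr{T}$ produce a natural quasi-homeomorphism, which supplies the associator.

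For the braiding I would combine the termwise symmetry of $\wedge$ on $\mathscr{T}$ with the block permutation $\chi_{p,q} \in O(p+q)$ implementing $\mathbb{R}^p \oplus \mathbb{R}^q \cong \mathbb{R}^q \oplus \mathbb{R}^p$ in $\mathscr{O}$. Concretely, on each wedge summand the map $X_p \wedge Y_q \to Y_q \wedge X_p$ composed with the reindexing by $\chi_{p,q}$ descends to an $O(n)$-equivariant quasi-continuous map of the balanced smash products, yielding a natural isomorphism $X \otimes Y \cong Y \otimes X$. The subtle point is checking that conjugation by $\chi_{p,q}$ correctly intertwines the $O(p)\times O(q)$-action with the $O(q)\times O(p)$-action after passing to the quotient; this is where the nontriviality of the braiding sits.

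The coherence axioms (pentagon, triangle, and the two hexagons) then reduce to the corresponding coherences for $(\mathscr{T}, \wedge, S^0)$, which hold via the adjunction of Proposition~\ref{wedgesarefunctions}, combined with relations among the block permutations $\chi_{p,q}$ in the symmetric groupoid on finite-dimensional Euclidean spaces. The main obstacle I anticipate is not the categorical skeleton of the argument but the bookkeeping of the quasi-continuous $O(n)$-actions under iterated balanced smash products $\wedge_{O(p)\times O(q)}$, and in particular verifying that every structure map is quasi-continuous in the sense of Definition~\ref{quasi-topology on Asy} when specialised; I would handle this uniformly by working with representing morphisms through the adjunction of Proposition~\ref{externalandtensorproduct} rather than pointwise, in line with the fuller treatment in~\cite{Brothesis}.
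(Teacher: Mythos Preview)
The paper does not actually give a proof of this proposition: it simply writes ``For details of the following see Chapter~4 of~\cite{Brothesis}'' and places a \qed. So there is no argument in the paper to compare against directly.

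Your proposal is the standard Day-convolution argument for putting a symmetric monoidal structure on a diagram category $\mathscr{T}^{\mathscr{O}}$ from symmetric monoidal structures on the base $(\mathscr{O},\oplus,0)$ and the target $(\mathscr{T},\wedge,S^0)$, and it is almost certainly what the thesis does, since this is how the analogous result for ordinary orthogonal sequences (in topological spaces) is proved in the Mandell--May--Schwede--Shipley framework. The unit computation you give is correct; the associator and braiding constructions via corepresentability through Proposition~\ref{externalandtensorproduct} are the right strategy; and your identification of the block permutation $\chi_{p,q}$ as the source of the nontrivial braiding is exactly the point. One small remark: your reference to Definition~\ref{quasi-topology on Asy} for checking quasi-continuity is slightly off, since that definition is specific to $\text{Asy}(A,B)$; what you need here is the general notion of quasi-continuity for maps between quasi-topological spaces, and that the balanced smash product and wedge are formed in $\mathscr{T}$ so that the structural maps are quasi-continuous by construction. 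But this is a matter of citing the right definition rather than a gap in the argument.
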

\qed

Let $S = (S^0, S^1, S^2 \ldots )$ be the quasi-orthogonal sequence defined in terms of quasi-topological spaces. 
For a proof of the subsequent result see Proposition 4.3.13~ in~\cite{Brothesis}.
\begin{proposition}\label{spheresequenceisacommonoid}
The orthogonal sequence of quasi-topological spaces $S = (S^0, S^1, S^2 \ldots )$ is a commutative monoid in the symmetric monoidal category $(\mathscr{T}^{\mathscr{O}}, \otimes, G_0S^0)$. 
\end{proposition}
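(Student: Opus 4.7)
The plan is to construct a unit $\eta \colon G_0 S^0 \to S$ and a multiplication $\mu \colon S \otimes S \to S$, and then verify associativity, unitality, and commutativity. By Proposition~\ref{externalandtensorproduct}, giving $\mu$ is equivalent to giving a natural transformation of quasi-biorthogonal sequences $\overline{\mu} \colon S \overline{\wedge} S \to S \circ \oplus$, i.e.\ a family of $O(p) \times O(q)$-equivariant basepoint-preserving quasi-continuous maps $\mu_{p,q} \colon S^p \wedge S^q \to S^{p+q}$. I would take $\mu_{p,q}$ to be the canonical homeomorphism induced by the direct-sum identification $\mathbb{R}^p \oplus \mathbb{R}^q \cong \mathbb{R}^{p+q}$ followed by one-point compactification. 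Equivariance is automatic because the block-diagonal embedding $O(p) \times O(q) \hookrightarrow O(p+q)$ acts on $S^{p+q}$ precisely through this decomposition. For the unit I would take the identity $S^0 \to S^0$ in degree $0$ and the unique maps from $\{\ast\}$ in all higher degrees.

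Associativity and unitality then reduce, under the adjunction of Proposition~\ref{externalandtensorproduct}, to corresponding statements about the direct sum $\mathbb{R}^p \oplus \mathbb{R}^q \oplus \mathbb{R}^r$. Tracking the two compositions $(S \otimes S) \otimes S \to S \otimes S \to S$ and $S \otimes (S \otimes S) \to S \otimes S \to S$ through the adjunction shows that each yields the canonical homeomorphism $S^p \wedge S^q \wedge S^r \cong S^{p+q+r}$, and these agree because direct sum of Euclidean spaces is strictly associative. Unitality is similar and follows from $\mathbb{R}^0 \oplus \mathbb{R}^n = \mathbb{R}^n$ on the nose, together with the fact that $G_0 S^0$ is concentrated in degree zero so only the index $p=0$ contributes to each wedge summand of $(G_0 S^0 \otimes S)_n$.

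Commutativity is the main obstacle, and the one place where the structure of $\otimes$ does genuine work. The symmetry isomorphism $\tau \colon S \otimes S \to S \otimes S$ in $(\mathscr{T}^{\mathscr{O}}, \otimes, G_0 S^0)$ is not the bare twist on smash products of spheres: it is twisted by the block-permutation matrix $\chi_{p,q} \in O(p+q)$ that interchanges the first $p$ and the last $q$ coordinates. This $\chi_{p,q}$ is precisely the element of $O(p+q)$ that intertwines the two direct-sum identifications $\mathbb{R}^p \oplus \mathbb{R}^q \cong \mathbb{R}^{p+q}$ and $\mathbb{R}^q \oplus \mathbb{R}^p \cong \mathbb{R}^{p+q}$. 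I would verify commutativity by showing that the composite $S^p \wedge S^q \to S^q \wedge S^p \xrightarrow{\mu_{q,p}} S^{p+q}$ differs from $\mu_{p,q}$ exactly by the action of $\chi_{p,q}$; after descending through the coequaliser $O(n)_+ \wedge_{O(p) \times O(q)} S^p \wedge S^q$ in the definition of $\otimes$, this discrepancy is absorbed and the two maps become equal. This is the key compatibility that makes $S$ commutative rather than merely associative.
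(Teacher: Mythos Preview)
Your argument is correct and is precisely the standard one for showing that the sphere sequence is a commutative monoid in orthogonal sequences; the paper itself does not give a proof in the text but defers to Proposition~4.3.13 of \cite{Brothesis}, where one finds exactly this construction. In particular, your identification of the commutativity step as the crucial one --- that the braiding in $\mathscr{T}^{\mathscr{O}}$ carries the block-permutation $\chi_{p,q}\in O(p+q)$, and that this element exactly intertwines $\mu_{p,q}$ with $\mu_{q,p}$ after passing to the balanced smash product --- is the heart of the matter and matches the expected argument.
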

%
%
\qed

\section{E-theory orthogonal quasi-spectra}
This section brings together ideas from the previous section, since we will define the notion of an orthogonal quasi-spectrum which is a quasi-orthogonal sequence with added structure. We will show that we have an orthogonal quasi-spectrum representing the graded $E$-theory groups and thereafter show we have a smash product.

\subsection{Quasi-Spectra}

We begin by defining concepts we have seen before in terms of quasi-topological spaces. 

A \emph{quasi-spectrum} is a sequence of based quasi-topological spaces $X_0, X_1, \ldots$ with structure maps $ \epsilon \colon X_m \rightarrow \Omega X_{m+1}$ that are quasi-continuous. 
An \emph{$\Omega$-quasi-spectrum} is a quasi-spectrum where for all natural numbers $m$ the structure maps $\epsilon \colon X_m \rightarrow \Omega X_{m+1}$ are weak equivalences. 
Then we can define an orthogonal quasi-spectrum:

\begin{definition}
An \emph{orthogonal quasi-spectrum} is 
\begin{itemize}
\item a sequence of based quasi-topological spaces $X_0, X_1, \ldots$ 
\item a basepoint preserving quasi-continuous left action of $O(m)$ on each $X_m$ for all $m$, and 
\item a collection of based structure maps $\sigma=\sigma_m \colon X_m \wedge S^1 \rightarrow X_{m+1}$ that are quasi-continuous, 
\end{itemize}
such that for each $m,n \geq 0$, the iterated map 
\[{\sigma^n_m} \colon X_m \wedge S^n \rightarrow X_{m+1} \wedge S^{n-1} \rightarrow \ldots  \rightarrow X_{m+n},\]
is quasi-continuous and $O(m) \times O(n)$-equivariant.  
\end{definition}
In the same manner, we have that a morphism of orthogonal quasi-spectrum $f \colon X \rightarrow Y$ is a collection of quasi-$O(m)$-equivariant maps $f_m \colon X_m \rightarrow Y_m$ for all $m$, which satisfy the following commutative diagram: 
\[\xymatrixcolsep{3pc}\xymatrixrowsep{3pc}\xymatrix{
X_m \wedge S^1 \ar[d]_-{\sigma_m} \ar[r]^-{f  \wedge \text{id}_{S^1}} &  Y_m \wedge S^1 \ar[d]^-{\sigma_m}\\
X_{m+1} \ar[r]^-{f_{m+1}} & Y_{m+1},}\]
or alternatively that the following diagram commutes: 
\[\xymatrixcolsep{3pc}\xymatrixrowsep{3pc}\xymatrix{
X_m \ar[d]_-{\epsilon_m} \ar[r]^-{f_m} &  Y_m \ar[d]^-{\epsilon_m}\\
\Omega X_{m+1} \ar[r]^-{\Omega f_{m+1}} & \Omega Y_{m+1}.}\]
It is easily seen that any orthogonal spectrum is an orthogonal quasi-spectrum. 
By Corollary~\ref{sigmaandomegaadjointinqts}, the structure maps in the definition of quasi-spectrum can be defined in terms of loop spaces. 
Notice that an orthogonal quasi-spectrum is a quasi-orthogonal sequence with  more structure. 

\begin{proposition}
The category of right $S$-modules, mod-$S$ is naturally equivalent to the category of orthogonal quasi-spectrum.
\end{proposition}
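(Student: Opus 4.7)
The plan is to build a pair of functors between the two categories and show they are mutually inverse up to natural isomorphism, leveraging the adjunction of Proposition~\ref{externalandtensorproduct} as the crucial bookkeeping device.

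First, given a right $S$-module $M$ with action $\mu \colon M \otimes S \to M$, I would unpack what $\mu$ amounts to. By Proposition~\ref{externalandtensorproduct}, morphisms $M \otimes S \to M$ in $\mathscr{T}^{\mathscr{O}}$ correspond bijectively to morphisms $M \,\overline{\wedge}\, S \to M \circ \oplus$ in $\mathscr{T}^{\mathscr{O} \times \mathscr{O}}$, i.e.\ to a family of quasi-continuous, basepoint preserving, $O(p)\times O(q)$-equivariant maps
\[\mu_{p,q} \colon M_p \wedge S^q \longrightarrow M_{p+q}, \qquad p,q \geq 0.\]
Setting $q=1$ produces candidates for the structure maps $\sigma_m := \mu_{m,1} \colon M_m \wedge S^1 \to M_{m+1}$. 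The unit axiom of the module action (using $G_0 S^0$ as the monoidal unit and the fact, Proposition~\ref{spheresequenceisacommonoid}, that $S$ is a commutative monoid) forces $\mu_{p,0}$ to be the identity, and the associativity of $\mu$ relative to the multiplication on $S$ forces the family $\mu_{p,q}$ to factor as the iterated composite of the $\sigma$'s. Consequently the iterated map $\sigma^n_m \colon X_m \wedge S^n \to X_{m+n}$ in the definition of orthogonal quasi-spectrum is exactly $\mu_{m,n}$, and its $O(m) \times O(n)$-equivariance is automatic. This defines a functor $\Phi$ from right $S$-modules to orthogonal quasi-spectra.

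Conversely, given an orthogonal quasi-spectrum $X$, I would construct $\Psi(X)$ by declaring $\mu_{p,q} := \sigma^q_p \colon X_p \wedge S^q \to X_{p+q}$. By hypothesis each $\mu_{p,q}$ is quasi-continuous and $O(p) \times O(q)$-equivariant, so they assemble into a morphism $X \,\overline{\wedge}\, S \to X \circ \oplus$, and hence under Proposition~\ref{externalandtensorproduct} into a morphism $X \otimes S \to X$ in $\mathscr{T}^{\mathscr{O}}$. The unit and associativity of this action reduce to the compatibility of the iterated structure maps (the commutative diagrams coming from $\sigma^{n+n'}_m = \sigma^{n'}_{m+n} \circ (\sigma^n_m \wedge \mathrm{id}_{S^{n'}})$), which is built into the definition of an orthogonal quasi-spectrum.

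Finally I would verify $\Phi \circ \Psi = \mathrm{id}$ and $\Psi \circ \Phi = \mathrm{id}$ on objects, both of which are tautological from the construction, and check naturality on morphisms: a morphism of right $S$-modules is precisely a morphism of quasi-orthogonal sequences commuting with the action, which under the adjunction translates exactly to a family of $O(m)$-equivariant maps $f_m \colon X_m \to Y_m$ intertwining the $\sigma_m$'s, i.e.\ a morphism of orthogonal quasi-spectra.

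The main obstacle I expect is the bookkeeping in the associativity step: one must carefully translate the pentagon/associativity diagram for the module action $\mu$ on $M \otimes S \otimes S$ into the assertion that $\mu_{p,q+q'}$ equals the composite $\mu_{p+q,q'} \circ (\mu_{p,q} \wedge \mathrm{id}_{S^{q'}})$, using the commutative monoid structure on $S$ from Proposition~\ref{spheresequenceisacommonoid}. This is essentially the standard verification that a monoid action unpacks into iterated binary multiplication, but carrying the $O(n)$-equivariance through the adjunction of Proposition~\ref{externalandtensorproduct} at each stage is where the technicalities live.
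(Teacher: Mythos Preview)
Your proposal is correct and follows essentially the same approach as the paper: both directions use Proposition~\ref{externalandtensorproduct} to translate a right $S$-module action $\mu \colon X \otimes S \to X$ into a family of $O(m)\times O(n)$-equivariant maps $X_m \wedge S^n \to X_{m+n}$ (and back), with the unit and associativity axioms of the module corresponding to $\mu_{m,0}$ being the identity and to $\mu_{m,n}$ agreeing with the iterated structure maps. Your write-up is considerably more explicit than the paper's (which simply asserts the associativity step), but the underlying argument is the same.
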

\begin{proof}
Consider the multiplication map $\nu \colon X \otimes S \rightarrow S$ for a right $S$-module $X$. Then by Proposition~\ref{externalandtensorproduct} we have a set of $O(m) \times O(n)$-equivariant maps 
\[ \nu_m^n \colon X_m \wedge S^n \rightarrow X_{m+n},\]
for $m,n \geq 0$ with unit quasi-homeomorphism $\nu_m^0$.
Now this action is associative so it follows that the structure maps are then defined by $\nu_m$. 

Conversely, consider the set of structure maps 
\[ \sigma_p^n \colon X_p \wedge S^p \rightarrow X_{n+p},\]
for a spectrum $X$ and $p,n \geq 0$, with unit quasi-homeomorphism $\sigma_p^0$. Then we have a multiplicative map $\nu \colon X \otimes S \rightarrow X$ defining a right $S$-module. 
Since these constructions are inverses, we have a natural equivalence of  these two categories. 
\end{proof}

Hence we can obtain a tensor product of orthogonal quasi-spectrum since we have a tensor product in the category of right $S$-modules.

\begin{definition}
Let $\mathbb{X}$ be an orthogonal quasi-spectrum with spaces $X_n$. For each integer $k \in \mathbb{Z}$ we define the \emph{$k$-th stable homotopy group} $\pi_k(\mathbb{X})$ to be the direct limit
\[\pi_k(\mathbb{X}) = \varinjlim_n \pi_{k+n} X_n,\]
under the maps $\epsilon_{\ast} \colon \pi_{k+n} X_n \rightarrow \pi_{k+n+1} X_{n+1}$ induced from the structure maps $\epsilon \colon \Omega^{k+n} X_n \rightarrow \Omega^{k+n+1} X_{n+1}$. 
\end{definition}

\subsection{Graded $E$-theory Spectra}

Let $\text{Asy}_g(A,B)$ denote the set of graded asymptotic morphisms from $A$ to $B$ with the quasi-topology as defined in Definition~\ref{quasi-topology on Asy}.
\begin{proposition}\label{tensorwithanasymptoticmorphismisqc}
The map of quasi-topological spaces $$f \colon \text{Asy}_g(A,B) \rightarrow \text{Asy}_g(D \widehat{\otimes} A, D \widehat{\otimes} B)$$ defined by 
\[f(x_t) = \text{id}_D \widehat{\otimes} x_t,\]
is quasi-continuous for all $x_t \in \text{Asy}_g(A,B)$.
\end{proposition}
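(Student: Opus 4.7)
The plan is to unpack quasi-continuity from Definition~\ref{quasi-topology on Asy} and reduce the problem to a statement about continuous $B$-valued (resp.\ $D \widehat{\otimes} B$-valued) functions on a compact Hausdorff space. Fix a compact Hausdorff space $Y$ and a quasi-continuous map $h \colon Y \to \text{Asy}_g(A,B)$; by definition, for each $t \in [1,\infty)$ and each $a \in A$, the function $\widetilde{h}_t(a) \colon Y \to B$, $y \mapsto h(y)_t(a)$, is continuous. I must show that the composite $f \circ h \colon Y \to \text{Asy}_g(D \widehat{\otimes} A, D \widehat{\otimes} B)$ is quasi-continuous, which amounts to the continuity of
\[
y \; \longmapsto \; (f \circ h)(y)_t(c) \; = \; (\mathrm{id}_D \widehat{\otimes} h(y)_t)(c)
\]
from $Y$ to $D \widehat{\otimes} B$, for every $t \geq 1$ and every $c \in D \widehat{\otimes} A$.

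First I would verify continuity on elementary tensors. For $c = d \widehat{\otimes} a$ the value is $d \widehat{\otimes} h(y)_t(a)$, which is the image of the continuous function $y \mapsto h(y)_t(a)$ under the bounded linear map $B \to D \widehat{\otimes} B$ given by $b \mapsto d \widehat{\otimes} b$, hence continuous. Since each $\mathrm{id}_D \widehat{\otimes} h(y)_t$ is linear, this continuity passes to finite algebraic sums $c = \sum_i d_i \widehat{\otimes} a_i$ without difficulty.

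The main obstacle is then extending from the algebraic tensor product to its completion $D \widehat{\otimes} A$. For general $c$, pick an approximating sequence $c_k$ of finite sums of elementary tensors with $c_k \to c$ in norm. Using property~(1) of an asymptotic morphism together with the continuity of $y \mapsto h(y)_t(a)$ on the compact space $Y$, one obtains, for each fixed $t$, a uniform bound on $\|h(y)_t\|$ over $y \in Y$, and hence a uniform bound on $\|\mathrm{id}_D \widehat{\otimes} h(y)_t\|$ via the definition of the graded tensor product norm (which is taken as a supremum over pairs of asymptotic morphisms, so it dominates the relevant operator norm). This bound forces the $Y$-indexed family of continuous functions $y \mapsto (\mathrm{id}_D \widehat{\otimes} h(y)_t)(c_k)$ to converge uniformly in $y$ to $y \mapsto (\mathrm{id}_D \widehat{\otimes} h(y)_t)(c)$, and a uniform limit of continuous $D \widehat{\otimes} B$-valued functions on $Y$ is continuous.

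The delicate step is precisely this uniform control on the compact $Y$: the norm on $D \widehat{\otimes} A$ is not obviously compatible with the pointwise-in-$y$ application of $\mathrm{id}_D \widehat{\otimes} h(y)_t$, so one needs to invoke the supremum definition of the graded tensor product norm, together with Lemma~\ref{tensorequivalence} to justify that $\mathrm{id}_D \widehat{\otimes} h(y)_t$ extends to a bounded map on the completion with the required norm estimate. Once this estimate is in hand, the uniform approximation argument closes the proof.
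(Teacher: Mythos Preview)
Your plan is sound on elementary tensors and finite sums, but the extension to the completion has a real gap. At a fixed parameter $t$ the map $h(y)_t \colon A \to B$ is \emph{not} linear (asymptotic morphisms are only linear in the limit $t\to\infty$), so the quantity you call ``$\|h(y)_t\|$'' is not defined, and there is no reason to expect $\sup_{\|a\|\le 1}\|h(y)_t(a)\|$ to be finite, let alone uniformly bounded in $y$. Property~(1) in the definition of an asymptotic morphism gives boundedness in $t$ for each \emph{fixed} $a$, which is a much weaker statement. Consequently your uniform-convergence argument for passing from the algebraic tensor product to its completion does not go through as written, and neither the tensor-norm definition nor Lemma~\ref{tensorequivalence} supplies the missing estimate: both operate at the level of entire asymptotic families, not individual $t$-slices.

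The paper's argument sidesteps this entirely by using the very definition of the quasi-topology, namely $Q(Y,\text{Asy}(A,B)) = \text{Asy}(A, C(Y,B))$. A quasi-continuous $g\colon Y\to \text{Asy}(A,B)$ \emph{is} an asymptotic morphism $\widetilde{g}\colon A \dashrightarrow C(Y,B)$, and tensoring with the identity (which is legitimate at the level of asymptotic families, cf.\ Lemma~\ref{tensorequivalence} and \cite{GHT00}) yields $\text{id}_D \widehat{\otimes}\widetilde{g}\colon D\widehat{\otimes}A \dashrightarrow D\widehat{\otimes}C(Y,B) \cong C(Y, D\widehat{\otimes}B)$; unwinding the identification, this is exactly the statement that $f\circ g$ is quasi-continuous. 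In other words, the functorial viewpoint lets you tensor the \emph{family} rather than each $h(y)_t$ separately, and the completion issue never arises. If you want to repair your approach, you should reformulate it in these terms rather than attempt norm estimates on the non-linear maps $h(y)_t$.
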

\begin{proof}
Since gradings follow immediately, we consider ungraded asymptotic morphisms throughout the proof. 
To prove a map of quasi-topological spaces is quasi-continuous, wwe need to check that for a quasi-continuous map $g: Y \rightarrow \text{Asy}(A,B)$ where $Y$ is a compact Hausdorff space, that the composition $fg \colon Y \rightarrow \text{Asy}(D \widehat{\otimes} A, D \widehat{\otimes} B)$ is quasi-continuous.
Suppose $g: Y \rightarrow \text{Asy}(A,B)$ where $Y$ is a compact Hausdorff space is quasi-continuous. 
Then by definition~\ref{quasi-topology on Asy} we know that $g$ is quasi-continuous when for each $t \in [1,\infty)$ the map $\widetilde{g_t}(a) \colon Y \rightarrow B$ defined by 
$$\widetilde{g_t}(a)(y) = g(y)_t(a)$$
is continuous. 
Then we define 
$fg \colon Y \rightarrow \text{Asy}(D \widehat{\otimes} A, D \widehat{\otimes} B)$ by for each $t \in [1,\infty)$ 
\[f(g(y)_t)(a) = \text{id}_D \widehat{\otimes} g(y)_t(a) =\text{id}_D \widehat{\otimes} \widetilde{g_t}(a)(y)\] 
but since $g$ is quasi-continuous and that 
\[f(g(y)_t)(a) = \widetilde{fg}_t (a)(y),\]
it follows from the definition of quasi-topology on the set of asymptotic morphisms that $fg$ is quasi-continuous.
\end{proof}

\begin{definition}\label{E-theory spectrum}
Let $\mathcal{K} = \mathcal{K}(\mathcal{H})$.
Define $\mathbb{X}(A,B)$ to be the sequence of based quasi-topological spaces $$X_m = \text{Asy}_g (\mathcal{S} \widehat{\otimes} A \widehat{\otimes} \mathcal{K}, B \widehat{\otimes} \mathbb{F}_{m,0} \widehat{\otimes} \mathcal{K})$$ where $m \geq 0$. Define maps $\epsilon_m \colon X_m \rightarrow \Omega X_{m+1}$:
  \[\xymatrix{\text{Asy}_g( \mathcal{S} \widehat{\otimes} A \widehat{\otimes} \mathcal{K}, B \widehat{\otimes} \mathbb{F}_{m,0} \widehat{\otimes} \mathcal{K})  \ar[r]  & \Omega \text{Asy}_g(\mathcal{S} \widehat{\otimes} A \widehat{\otimes}\mathcal{K},  B \widehat{\otimes} \mathbb{F}_{m+1,0} \widehat{\otimes} \mathcal{K}) \ar@{=}[d]^{\cong} \\ &  \text{Asy}_g(\mathcal{S} \widehat{\otimes} A \widehat{\otimes} \mathcal{K}, \Sigma ( B \widehat{\otimes} \mathbb{F}_{m+1,0}) \widehat{\otimes} \mathcal{K})}   \]
by: 
\[\epsilon (x_t) = (b \widehat{\otimes} \text{id}_{B \widehat{\otimes} \mathbb{F}_{m,0} \widehat{\otimes} \mathcal{K}}) \circ (\text{id}_{\mathcal{S}} \widehat{\otimes} x_t)  \circ (\Delta \widehat{\otimes} \text{id}_{A \widehat{\otimes}\mathcal{K}}),\]
for all $x_t \in\text{Asy}_g (\mathcal{S} \widehat{\otimes} A \widehat{\otimes} \mathcal{K}, B \widehat{\otimes} \mathbb{F}_{m,0} \widehat{\otimes} \mathcal{K})$ and the Bott map $b \in \text{Hom}_g(\mathcal{S},\Sigma  \mathbb{F}_{1,0})$ from Theorem~\ref{graded Bott map}. 
Alternatively, we also have a map $\sigma_m \colon X_m \wedge S^1 \rightarrow X_{m+1}$ defined by
\[\sigma_m(x_t,s) = \epsilon_m(x_t)(s),\]
with $x_t \in\text{Asy}_g (\mathcal{S} \widehat{\otimes} A \widehat{\otimes} \mathcal{K}(\mathcal{H}), B \widehat{\otimes} \mathbb{F}_{m,0} \widehat{\otimes} \mathcal{K}(\mathcal{H}))$ and $s \in S^1$. 
\end{definition}
\begin{definition}\label{actiononAsy}
We define a quasi-continuous action of the group $O(m)$ on the space $X_m$ as follows. 
First we consider the alternative definition of the Clifford algebra $\mathbb{F}_{m,0}$ as $\text{Cliff}(V)$.  Recall that for $V$ an $m$-dimensional Euclidean vector space, $\text{Cliff}(V) = G(V)/\sim$ where $G(V)$ is the algebra generated by $V$ subject to the equivalence relation $\sim$ defined by 
\[ v^2 = ||v||^2 \cdot 1\]
for all $v \in V$. 
We write $ab$ for the product of two elements $a,b \in \text{Cliff}(V)$.

If $V = \mathbb{R}^m$, then we have a natural group action $(H,v) \mapsto Hv$ where $H \in O(m)$, $v \in V$.

Then we can define a group action of $O(m)$ on $G(V)$ by 
\[H(v_1 \ldots v_k) \mapsto H(v_1) \ldots H(v_k) ~ ~ \text{and}~ H(1) =1\]
for all $H \in O(m)$. Then this gives a group action of $O(m)$ on $\text{Cliff}(V)$ since 
\begin{align*}
H(v^2) 
&= H(v) H(v) = (H(v))^2 \\
& = ||H(v)||^2 \cdot 1 = ||v|| \cdot 1 ~ \text{since} ~  H ~ \text{is orthogonal}. \\ 
\end{align*}
So then we get a group action
\[\lambda \colon O(m) \times \mathbb{F}_{m,0} \rightarrow \mathbb{F}_{m,0},\]
by \[\lambda(H, (e_1,e_2, \ldots e_m)) =  H(e_1)H(e_2) \ldots H(e_m),\]
where $H \in O(m)$, $e_1,e_2, \ldots e_m$ are the generators of the algebra $\mathbb{F}_{m,0}$.
Then we define 
\[\lambda_{\ast} \colon O(m) \times B \widehat{\otimes} \mathbb{F}_{m,0} \widehat{\otimes} \mathcal{K}(\mathcal{H}) \rightarrow B \widehat{\otimes} \mathbb{F}_{m,0} \widehat{\otimes} \mathcal{K}(\mathcal{H})\]
by \[\lambda_{\ast}(H, b \widehat{\otimes} x \widehat{\otimes} p) = b \widehat{\otimes} \lambda(H,x) \widehat{\otimes} p \]
with $H \in O(m)$, $b \in B$, $x \in \mathbb{F}_{m,0}$ and $p \in \mathcal{K}(\mathcal{H})$.
Then we finally define a group action of $O(m)$ on $X_m$
\[\lambda_{\ast \ast} \colon O(m) \times \text{Asy}_g (\mathcal{S} \widehat{\otimes} A \widehat{\otimes} \mathcal{K}(\mathcal{H}), B \widehat{\otimes} \mathbb{F}_{m,0} \widehat{\otimes} \mathcal{K}(\mathcal{H}))\] \[\longrightarrow \text{Asy}_g (\mathcal{S} \widehat{\otimes} A \widehat{\otimes} \mathcal{K}(\mathcal{H}), B \widehat{\otimes} \mathbb{F}_{m,0} \widehat{\otimes} \mathcal{K}(\mathcal{H})),\]
by 
\[\lambda_{\ast \ast} (H, \alpha_t)(x) = \lambda_{\ast}(H, \alpha_t(x)),\]
where we have $\alpha_t \in \text{Asy}_g(\mathcal{S} \widehat{\otimes} A \widehat{\otimes} \mathcal{K}(\mathcal{H}), B \widehat{\otimes} \mathbb{F}_{m,0} \widehat{\otimes} \mathcal{K}(\mathcal{H}))$, $x \in \mathcal{S} \widehat{\otimes} A \widehat{\otimes} \mathcal{K}(\mathcal{H})$ and $H \in O(m)$. Then it follows that this action is $O(m)$-equivariant.
\end{definition}

The following is true by Proposition~\ref{tensorwithanasymptoticmorphismisqc}.
\begin{proposition}\label{actiononAsyisqcetc}
The action in the previous definition is a basepoint preserving quasi-continuous action of $O(m)$ on $X_m$.
\end{proposition}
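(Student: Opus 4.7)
The plan is to verify the two non-trivial properties of the map $\lambda_{**}$: basepoint preservation and quasi-continuity. The $O(m)$-equivariance is tautological from the construction in Definition~\ref{actiononAsy}, so I would mention it only in passing. For basepoint preservation, the basepoint of $X_m$ is the zero asymptotic morphism. For each $H \in O(m)$, the map $\lambda_*(H,-)$ on $B \widehat{\otimes} \mathbb{F}_{m,0} \widehat{\otimes} \mathcal{K}(\mathcal{H})$ is a linear $\ast$-homomorphism, being built from the automorphism $\lambda(H,-)$ of $\mathbb{F}_{m,0}$ tensored with identities on $B$ and $\mathcal{K}(\mathcal{H})$. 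Hence $\lambda_*(H,0)=0$ and so $\lambda_{**}(H,0)=0$.

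For quasi-continuity, I would unfold Definition~\ref{quasi-topology on Asy}. Let $Y$ be a compact Hausdorff space and let $h=(h_1,h_2)\colon Y\to O(m)\times X_m$ be quasi-continuous, with $h_1\colon Y\to O(m)$ classically continuous and $h_2\colon Y\to X_m$ quasi-continuous. I must show that for each $t\in[1,\infty)$ and each $a\in\mathcal{S}\widehat{\otimes} A\widehat{\otimes}\mathcal{K}(\mathcal{H})$, the map
\[
y\;\longmapsto\;\bigl(\lambda_{**}(h_1(y),h_2(y))\bigr)_t(a)\;=\;\lambda_*\!\bigl(h_1(y),\,(h_2(y))_t(a)\bigr)
\]
is norm-continuous into $B\widehat{\otimes}\mathbb{F}_{m,0}\widehat{\otimes}\mathcal{K}(\mathcal{H})$. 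Quasi-continuity of $h_2$ gives the continuity of $y\mapsto (h_2(y))_t(a)$ directly from the definition, and continuity of $y\mapsto h_1(y)$ is by hypothesis. The question therefore reduces to showing joint continuity of $\lambda_*$ itself.

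For $\lambda_*$, I would note that $\lambda(H,e_{i_1}\cdots e_{i_k})=H(e_{i_1})\cdots H(e_{i_k})$ is polynomial in the matrix entries of $H$, and since $\mathbb{F}_{m,0}$ is finite-dimensional this yields joint norm-continuity of $\lambda$ on $O(m)\times\mathbb{F}_{m,0}$. Joint continuity is then transported to $B\widehat{\otimes}\mathbb{F}_{m,0}\widehat{\otimes}\mathcal{K}(\mathcal{H})$ by tensoring with identities on $B$ and $\mathcal{K}(\mathcal{H})$; this is exactly the ``tensor with an identity is benign'' principle captured by Proposition~\ref{tensorwithanasymptoticmorphismisqc}, and completes the verification.

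The main obstacle is the joint (as opposed to separate) continuity of $\lambda_*$. Separate continuity is trivial: for fixed $H$, $\lambda_*(H,-)$ is a $\ast$-homomorphism, and for fixed element of the target, continuity in $H$ comes from the polynomial description above. The joint version requires a short uniform estimate, which is free because $O(m)$ is compact and every $\lambda(H,-)$ is an isometric automorphism of $\mathbb{F}_{m,0}$, so $\lambda_*(H,-)$ is isometric uniformly in $H$. Once this is in hand, a standard three-$\varepsilon$ argument closes the proof.
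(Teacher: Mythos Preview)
Your proposal is correct and follows the same approach as the paper, which simply cites Proposition~\ref{tensorwithanasymptoticmorphismisqc} without further comment. You go beyond the paper by explicitly addressing the joint (rather than merely separate) continuity of $\lambda_*$ via compactness of $O(m)$ and the isometric nature of the Clifford action, which is a genuine detail the paper leaves implicit.
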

\qed
\begin{proposition}\label{structuremapisqc}
The map $\epsilon_m \colon X_m \rightarrow \Omega X_{m+1}$ is quasi-continuous and hence the map $\sigma_m \colon X_m \wedge S^1 \rightarrow X_{m+1}$ is quasi-continuous. 
\end{proposition}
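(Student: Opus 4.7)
The plan is to view $\epsilon_m$ as a composition of three elementary operations on asymptotic morphisms and argue that each of them preserves quasi-continuity. Writing
\[
\epsilon_m(x_t) = (b \widehat{\otimes} \text{id}) \circ (\text{id}_{\mathcal{S}} \widehat{\otimes} x_t) \circ (\Delta \widehat{\otimes} \text{id}),
\]
I decompose $\epsilon_m = \gamma \circ \tau \circ \delta$, where $\delta$ is pre-composition with the fixed graded $*$-homomorphism $\Delta \widehat{\otimes} \text{id}_{A \widehat{\otimes} \mathcal{K}}$, $\tau$ is the tensoring operation $x_t \mapsto \text{id}_{\mathcal{S}} \widehat{\otimes} x_t$, and $\gamma$ is post-composition with the fixed graded $*$-homomorphism $b \widehat{\otimes} \text{id}_{B \widehat{\otimes} \mathbb{F}_{m+1,0} \widehat{\otimes} \mathcal{K}}$. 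Since a composite of quasi-continuous maps is quasi-continuous, it is enough to verify quasi-continuity of each factor.

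For $\tau$, this is essentially Proposition~\ref{tensorwithanasymptoticmorphismisqc}, with the tensor factor on the opposite side; the proof given there applies verbatim, because graded tensoring with an identity commutes across either side of the tensor product. For $\delta$ and $\gamma$, the argument is a direct unwinding of Definition~\ref{quasi-topology on Asy}: given any quasi-continuous test map $g \colon Y \to X_m$ with $Y$ compact Hausdorff, continuity of $y \mapsto g(y)_t(a)$ for each fixed $t$ and $a$ immediately yields continuity of $y \mapsto g(y)_t\bigl((\Delta \widehat{\otimes} \text{id})(a')\bigr)$ and of $y \mapsto (b \widehat{\otimes} \text{id})\bigl(g(y)_t(a)\bigr)$, since these are continuous functions post-composed with a fixed $*$-homomorphism, respectively continuous functions pre-composed with a fixed element of the source algebra. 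Hence both $\delta \circ g$ and $\gamma \circ g$ satisfy the criterion of Definition~\ref{quasi-topology on Asy}.

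The second assertion, that $\sigma_m$ is quasi-continuous, then follows at once from Corollary~\ref{sigmaandomegaadjointinqts}: $\sigma_m$ and $\epsilon_m$ are adjoint under the quasi-homeomorphism $F(X_m \wedge S^1, X_{m+1}) \cong F(X_m, \Omega X_{m+1})$, so quasi-continuity of one forces quasi-continuity of the other. The only mild subtlety, and the step I would watch most carefully, is the identification $\Omega \text{Asy}_g(-, C) \cong \text{Asy}_g(-, \Sigma C)$ that is built into the target of $\epsilon_m$: one must confirm that the tensor factor $\text{id}_{\mathcal{S}}$ in the middle of the composition genuinely corresponds to the loop coordinate in $\Omega X_{m+1}$ under this identification. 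Once that bookkeeping is aligned with the definition of the quasi-topology on $\text{Asy}_g$, the remainder is a routine chaining of the earlier propositions.
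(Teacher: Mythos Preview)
Your approach is essentially the paper's: decompose $\epsilon_m$ into the tensoring step $x_t \mapsto \text{id}_{\mathcal{S}} \widehat{\otimes} x_t$ (handled by Proposition~\ref{tensorwithanasymptoticmorphismisqc} with $D=\mathcal{S}$) together with pre- and post-composition by the fixed $\ast$-homomorphisms $\Delta \widehat{\otimes} \text{id}$ and $b \widehat{\otimes} \text{id}$, then deduce quasi-continuity of $\sigma_m$ from that of $\epsilon_m$. One small bookkeeping slip: your ordering $\gamma \circ \tau \circ \delta$ cannot be right, since $\Delta \widehat{\otimes} \text{id}_{A \widehat{\otimes} \mathcal{K}}$ lands in $\mathcal{S} \widehat{\otimes} \mathcal{S} \widehat{\otimes} A \widehat{\otimes} \mathcal{K}$, which is not the domain of $x_t$; the tensoring $\tau$ must come first, so the correct factorisation is $\gamma \circ \delta \circ \tau$ (or $\delta \circ \gamma \circ \tau$, since pre- and post-composition commute).
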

\begin{proof}
Since both $$b \widehat{\otimes} \text{id}_{B \widehat{\otimes} \mathbb{F}_{m,0} \widehat{\otimes} \mathcal{K}(\mathcal{H})} ~~~\text{and}~~~ \Delta \widehat{\otimes} \text{id}_{A \widehat{\otimes}\mathcal{K}(\mathcal{H})}$$ are $\ast$-homomorphisms, these two maps are continuous. So it suffices to check the map $(\text{id}_{\mathcal{S}} \widehat{\otimes} x_t)$ is quasi-continuous since the a composition of continuous and quasi-continuous maps yields a quasi-continuous map. By proposition~\ref{tensorwithanasymptoticmorphismisqc}, with $D = \mathcal{S}$ it follows that the map $(\text{id}_{\mathcal{S}} \widehat{\otimes} x_t)$ is quasi-continuous. Hence the map $\epsilon_m$ is quasi-continuous. 
Since $\sigma_m$ is defined in terms of $\epsilon_m$ it is also quasi-continuous. 
\end{proof}

We define the iterated map $\sigma^n_m \colon X_m \wedge S^n \rightarrow X_{m+n}$ by 
\[\sigma^n_m(x_t, s_1, s_2, \ldots s_n) = \epsilon^n(x_t)(s_1)(s_2)\ldots(s_n),\]
where $x_t \in\text{Asy}_g (\mathcal{S} \widehat{\otimes} A \widehat{\otimes} \mathcal{K}(\mathcal{H}), B \widehat{\otimes} \mathbb{F}_{m,0} \widehat{\otimes} \mathcal{K}(\mathcal{H}))$ and $s_1, s_2, \ldots s_n$  is contained in $S^1\wedge S^1 \wedge \ldots \wedge  S^1$.

\begin{proposition}\label{mapisqcandequivariant}
The iterated map $\sigma^n_m \colon X_m \wedge S^n \rightarrow X_{m+n}$ defined above quasi-continuous and $O(m) \times O(n)$-equivariant.
\end{proposition}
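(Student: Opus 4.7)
The plan is to prove the proposition by induction on $n$, reducing the iterated statement to a repeated application of the single-step case $\sigma_m \colon X_m \wedge S^1 \to X_{m+1}$, which is already known to be quasi-continuous by Proposition~\ref{structuremapisqc}.

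For quasi-continuity, I would first observe that $\sigma^n_m$ factors as the composition
\[
X_m \wedge S^n \;\cong\; X_m \wedge S^1 \wedge S^{n-1} \xrightarrow{\sigma_m \wedge \mathrm{id}_{S^{n-1}}} X_{m+1} \wedge S^{n-1} \xrightarrow{\sigma^{n-1}_{m+1}} X_{m+n}.
\]
The smash product of a quasi-continuous map with an identity is quasi-continuous (this follows from Proposition~\ref{wedgesarefunctions} and the functoriality of $\wedge$), and the composition of quasi-continuous maps is quasi-continuous. By induction $\sigma^{n-1}_{m+1}$ is quasi-continuous, and $\sigma_m$ is quasi-continuous by Proposition~\ref{structuremapisqc}, so $\sigma^n_m$ is quasi-continuous.

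For the $O(m) \times O(n)$-equivariance, I would combine Proposition~\ref{actiononAsyisqcetc} (which gives the $O(m)$-action on $X_m$) with the standard $O(n)$-action on $S^n$ coming from its identification with the one-point compactification of $\mathbb{R}^n$. The $O(m) \times O(n)$-action on $X_{m+n}$ is obtained from the $O(m+n)$-action of Definition~\ref{actiononAsy} via the block diagonal embedding $O(m) \times O(n) \hookrightarrow O(m+n)$, which acts on $\mathbb{F}_{m+n,0}$ by acting on the first $m$ and last $n$ Clifford generators separately. The key point is that the iterated structure map corresponds, via its definition in terms of the Bott element $b$ and the comultiplication $\Delta$, to adjoining $n$ new Clifford generators $e_{m+1},\ldots,e_{m+n}$, one for each copy of $S^1$ in $S^n$. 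Thus the induction step reduces to checking that a single application of $\sigma$ is $O(m) \times O(1)$-equivariant: the $O(m)$-factor commutes with $\sigma_m$ because the Bott map only touches the new generator $e_{m+1}$ and not the existing ones on which $O(m)$ acts, while the $O(1) = \{\pm 1\}$-factor acts on $S^1$ by reflection and correspondingly on $\mathbb{F}_{1,0}$ by $e_{m+1} \mapsto -e_{m+1}$, which matches the sign arising from the naturality of $b$.

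The main obstacle will be verifying cleanly the $O(1)$ part of the equivariance at the base case, namely the compatibility between the reflection action on the suspension coordinate in $S^1$ and the sign action on the Clifford generator produced by the Bott map $b$. Once this base-case compatibility is established, the inductive step is essentially formal: the two actions split across the external smash $X_m \wedge S^1 \wedge S^{n-1}$ so that on the $X_m$ factor the $O(m)$-equivariance of $\sigma_m$ applies, on the $S^1$ factor the $O(1)$-equivariance just proved applies, and on the $S^{n-1}$ factor the inductive hypothesis for $\sigma^{n-1}_{m+1}$ handles $O(m+1) \times O(n-1) \supset O(m) \times O(n)$. Assembling these yields the desired $O(m) \times O(n)$-equivariance of $\sigma^n_m$.
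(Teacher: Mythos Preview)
Your argument for quasi-continuity is fine and matches the paper's brief appeal to ``other results.'' The gap is in the equivariance argument: your induction does not yield the full $O(m)\times O(n)$-equivariance.

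The problem is the claimed containment ``$O(m+1)\times O(n-1)\supset O(m)\times O(n)$,'' which is false. Concretely, the intermediate space $X_{m+1}\wedge S^{n-1}$ in your factorisation carries an $O(m+1)\times O(n-1)$-action, and the $O(m)\times O(n)$-action on the source does \emph{not} pass through it: an element $K\in O(n)$ that genuinely mixes the first $S^1$-coordinate with the remaining $S^{n-1}$-coordinates has no meaning on $X_{m+1}\wedge S^{n-1}$, since the first coordinate has already been absorbed into $X_{m+1}$. Hence your inductive step only produces equivariance for the subgroup $O(m)\times\bigl(O(1)\times O(n-1)\bigr)\subsetneq O(m)\times O(n)$, and no finite iteration of the splitting $S^n\cong S^1\wedge S^{n-1}$ will recover the full orthogonal group acting on $S^n$.

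The paper avoids this by treating all $n$ sphere coordinates at once: it writes $\sigma_m^n$ in terms of the $n$-fold Bott map $b^n=b\,\widehat{\otimes}\cdots\widehat{\otimes}\,b\colon \mathcal{S}^{\widehat{\otimes} n}\to \Sigma^n\widehat{\otimes}\mathbb{F}_{n,0}$ and verifies directly the key identity
\[
b^n(\lambda)(Ks)=K\cdot b^n(\lambda)(s)\qquad (K\in O(n),\ s\in S^n),
\]
which expresses the manifest $O(n)$-equivariance of the assignment $s\mapsto s_1e_1+\cdots+s_ne_n$ underlying the Bott element. With this in hand, the computation $\epsilon(Hx_t)(Ks)=i(H,K)\,\epsilon(x_t)(s)$ is a straightforward unwinding. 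To repair your argument you would need precisely this global $O(n)$-equivariance of $b^n$; once you invoke it, the induction becomes superfluous.
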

\begin{proof}
By other results it suffices to check that the map is $O(m) \times O(n)$-equivariant. 

Firstly it is clear that $X_m \wedge S^n$ and $X_{m+n}$ are quasi $O(m) \times O(n)$-spaces. Let $i \colon O(m) \times O(n) \rightarrow O(m+n)$ be the inclusion map.

Now $O(m+n)$ acts on $\text{Asy}_g (\mathcal{S} \widehat{\otimes} A \widehat{\otimes} \mathcal{K}(\mathcal{H}), B \widehat{\otimes} \mathbb{F}_{m+n,0} \widehat{\otimes} \mathcal{K}(\mathcal{H}))$ by 
$$J(x_t) = (\text{id}_B \widehat{\otimes} J \widehat{\otimes} \text{id}_{\mathcal{K}(\mathcal{H})}) \circ x_t$$
for all $J \in O(m+n)$ and $x_t \colon \mathcal{S} \widehat{\otimes} A \widehat{\otimes} \mathcal{K}(\mathcal{H}) \rightarrow B \widehat{\otimes} \mathbb{F}_{m+n,0} \widehat{\otimes} \mathcal{K}(\mathcal{H})$. Here $J$ acts of $\mathbb{F}_{m+n,0}$ as defined earlier. 

Then $O(m) \times O(n)$ acts on $\text{Asy}_g(\mathcal{S} \widehat{\otimes} A \widehat{\otimes} \mathcal{K}(\mathcal{H}), B \widehat{\otimes} \mathbb{F}_{m,0} \widehat{\otimes} \mathcal{K}(\mathcal{H})) \wedge S^n$ by, 
$$(H,K)(x_t,s) = ((\text{id}_B \widehat{\otimes} H\widehat{\otimes} \text{id}_{\mathcal{K}(\mathcal{H})}) \circ x_t, Ks),$$ 
for all $H \in O(m)$, $K \in O(n)$, $x_t \in \text{Asy}_g (\mathcal{S} \widehat{\otimes} A \widehat{\otimes} \mathcal{K}(\mathcal{H}), B \widehat{\otimes} \mathbb{F}_{m,0} \widehat{\otimes} \mathcal{K}(\mathcal{H}))$ and $s \in S^n$.
Then we need to show that for $\sigma = \sigma^n_m \colon X_m \wedge S^n \rightarrow X_{m+n}$, that 
\[\sigma ((H,K)(x_t,s)) = i(H,K) \sigma (x_t,s), \]
that is, 
\[\sigma ((Hx_t,Ks) = i(H,K) \sigma (x_t,s).\]

That is to show, by definition of $\sigma$ that, 

\[\epsilon(Hx_t)( Ks) = i(H,K) \epsilon(x_t)(s).\]

Let $b^n = b \widehat{\otimes} \ldots \widehat{\otimes} b$ be the $n$-fold graded tensor product of the Bott map, $b \colon \mathcal{S} \rightarrow \Sigma \mathbb{F}_{1,0}$. 
Then $$b^n =b \widehat{\otimes} \ldots \widehat{\otimes} b \colon \mathcal{S}^n \rightarrow \Sigma^n \widehat{\otimes}\mathbb{F}_{n,0},$$
we have $b \widehat{\otimes} \ldots \widehat{\otimes} b(\lambda)(s) \in \mathbb{F}_{n,0}$ for $\lambda \in \mathcal{S}^n$ and $s \in S^n$. Then for $K \in O(n)$,
\[(b \widehat{\otimes} \ldots \widehat{\otimes} b)(\lambda)(Ks) = K(b \widehat{\otimes} \ldots \widehat{\otimes} b)(\lambda)(s).\]

Then by permuting copies of $\Sigma$ and extending by linearity we have an action of the orthogonal group.

Reconsidering \[\epsilon(Hx_t)( Ks) = i(H,K) \epsilon(x_t)(s),\]
the left hand side yields 
\[\epsilon(Hx_t)(Ks)= ((b^n \widehat{\otimes} \text{id}_{B \widehat{\otimes} \mathbb{F}_{m,0} \widehat{\otimes} \mathcal{K}(\mathcal{H})}) \circ (\text{id}_{\mathcal{S}} \widehat{\otimes} H x_t)  \circ (\Delta \widehat{\otimes} \text{id}_{A \widehat{\otimes}\mathcal{K}(\mathcal{H})})(Ks)),\]
and the right hand side yields 
$$i(H,K) \epsilon(x_t)(s) = i(H,K)(b \widehat{\otimes} \text{id}_{B \widehat{\otimes} \mathbb{F}_{m,0} \widehat{\otimes} \mathcal{K}(\mathcal{H})}) \circ (\text{id}_{\mathcal{S}} \widehat{\otimes} x_t)  \circ (\Delta \widehat{\otimes} \text{id}_{A \widehat{\otimes}\mathcal{K}(\mathcal{H})}).$$

Then 

\begin{eqnarray*}
\lefteqn{\epsilon(Hx_t)(Ks)} \\
& =& ((b^n \widehat{\otimes} \text{id}_{B \widehat{\otimes} \mathbb{F}_{m,0} \widehat{\otimes} \mathcal{K}(\mathcal{H})}) \circ (\text{id}_{\mathcal{S}} \widehat{\otimes} H x_t)  \circ (\Delta \widehat{\otimes} \text{id}_{A \widehat{\otimes}\mathcal{K}(\mathcal{H})}))(Ks) \\
&=& i(H,1)((b^n \widehat{\otimes} \text{id}_{B \widehat{\otimes} \mathbb{F}_{m,0} \widehat{\otimes} \mathcal{K}(\mathcal{H})}) \circ (\text{id}_{\mathcal{S}} \widehat{\otimes} x_t)  \circ (\Delta \widehat{\otimes} \text{id}_{A \widehat{\otimes}\mathcal{K}(\mathcal{H})})(Ks)) \\
&= & i(H,1) i(1,K)(b^n \widehat{\otimes} \text{id}_{B \widehat{\otimes} \mathbb{F}_{m,0} \widehat{\otimes} \mathcal{K}(\mathcal{H})}) \circ (\text{id}_{\mathcal{S}} \widehat{\otimes} x_t)  \circ (\Delta \widehat{\otimes} \text{id}_{A \widehat{\otimes}\mathcal{K}(\mathcal{H})})(s) \\ 
&=&  i(H,K)(b^n \widehat{\otimes} \text{id}_{B \widehat{\otimes} \mathbb{F}_{m,0} \widehat{\otimes} \mathcal{K}(\mathcal{H})}) \circ (\text{id}_{\mathcal{S}} \widehat{\otimes} x_t)  \circ (\Delta \widehat{\otimes} \text{id}_{A \widehat{\otimes}\mathcal{K}(\mathcal{H})})(s) \\ 
&=& i(H,K) \epsilon(x_t)(s)
\end{eqnarray*}
Then the result follows.

\end{proof}

The proof of the following result follows from the above propositions, namely Proposition~\ref{actiononAsyisqcetc}, Proposition~\ref{structuremapisqc} and Proposition~\ref{mapisqcandequivariant}.

\begin{proposition}
The spectrum $\mathbb{X}(A,B)$ is an orthogonal quasi-spectrum.
\end{proposition}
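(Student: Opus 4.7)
The plan is to verify the three axioms of an orthogonal quasi-spectrum one by one, observing that essentially all the analytical content has already been packaged into the earlier propositions, so this proof is a bookkeeping exercise that assembles them. First I would note that by construction $X_m = \text{Asy}_g(\mathcal{S}\widehat{\otimes}A\widehat{\otimes}\mathcal{K},\, B\widehat{\otimes}\mathbb{F}_{m,0}\widehat{\otimes}\mathcal{K})$ is a quasi-topological space (with the quasi-topology of Definition~\ref{quasi-topology on Asy}), whose basepoint is the zero asymptotic morphism; this gives the underlying sequence of based quasi-topological spaces.

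Next I would appeal to Proposition~\ref{actiononAsyisqcetc} to conclude that the map $\lambda_{\ast\ast}\colon O(m)\times X_m\to X_m$ defined in Definition~\ref{actiononAsy} is a basepoint-preserving quasi-continuous left action of $O(m)$ on $X_m$, verifying the second bullet of the definition. Then I would appeal to Proposition~\ref{structuremapisqc} to conclude that the structure maps $\sigma_m\colon X_m\wedge S^1\to X_{m+1}$ (equivalently $\epsilon_m\colon X_m\to \Omega X_{m+1}$ via the adjunction of Corollary~\ref{sigmaandomegaadjointinqts}) are quasi-continuous and basepoint-preserving, verifying the third bullet.

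Finally, for the coherence condition on iterated structure maps, I would invoke Proposition~\ref{mapisqcandequivariant}, which gives that $\sigma_m^n\colon X_m\wedge S^n\to X_{m+n}$ is both quasi-continuous and $O(m)\times O(n)$-equivariant with respect to the actions defined in Definition~\ref{actiononAsy} and the standard action of $O(n)$ on $S^n$. Putting these three ingredients together shows that $\mathbb{X}(A,B)$ satisfies every clause of the definition of an orthogonal quasi-spectrum, completing the proof.

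There is no real obstacle to overcome: the only thing that could require care is checking that the basepoints are genuinely preserved at each stage (the zero asymptotic morphism is sent to the zero asymptotic morphism by $\epsilon_m$ since $\Delta$, $b$, and the tensor-with-identity operations are all linear and the composition with the zero map is zero) and that the $O(n)$-action on $S^n$ used in Proposition~\ref{mapisqcandequivariant} agrees with the $O(n)$-action on the smash factors $S^1\wedge\cdots\wedge S^1$, but both of these are routine.
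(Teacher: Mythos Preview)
Your proposal is correct and follows essentially the same approach as the paper: the paper's proof simply states that the result follows from Proposition~\ref{actiononAsyisqcetc}, Proposition~\ref{structuremapisqc}, and Proposition~\ref{mapisqcandequivariant}, which is exactly the decomposition you give. Your additional remarks about basepoints and the compatibility of the $O(n)$-action on $S^n$ are harmless elaborations of points the paper leaves implicit.
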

\qed

\begin{proposition}\label{isomorphismsinducedirectlimits}
If $G_0,G_1,G_2, \ldots$ is a sequence of groups with isomorphisms $\theta_n \colon G_n \rightarrow G_{n+1}$ for $n \geq 0$, then 
\[\varinjlim_n G_n = G_0.\] 
\end{proposition}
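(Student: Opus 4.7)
The plan is to show that the canonical structure map $\iota_0 \colon G_0 \to \varinjlim_n G_n$ (i.e.\ the component at index $0$ of the universal cocone) is an isomorphism of groups. Recall that the direct limit comes equipped with structure maps $\iota_n \colon G_n \to \varinjlim_n G_n$ satisfying the compatibility $\iota_{n+1} \circ \theta_n = \iota_n$ for all $n \geq 0$, and admits the concrete realisation $\bigsqcup_n G_n/\!\sim$, where $g \in G_n$ and $h \in G_m$ are identified precisely when their images under a sufficiently long composition of $\theta_k$'s coincide in some $G_k$.

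For surjectivity, I would pick any element of $\varinjlim_n G_n$ and represent it as $\iota_n(g_n)$ for some $g_n \in G_n$. Since each $\theta_k$ is an isomorphism, the composition $\theta_{n-1} \circ \cdots \circ \theta_0 \colon G_0 \to G_n$ is also an isomorphism, so I may set $g_0 := (\theta_{n-1} \circ \cdots \circ \theta_0)^{-1}(g_n) \in G_0$. Iterating the compatibility relation then yields
\[
\iota_0(g_0) = \iota_1(\theta_0(g_0)) = \cdots = \iota_n(\theta_{n-1} \circ \cdots \circ \theta_0(g_0)) = \iota_n(g_n),
\]
so $\iota_0$ hits every element.

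For injectivity, I would suppose $\iota_0(g_0)$ equals the identity in $\varinjlim_n G_n$. By the explicit construction as a quotient, this means that $(\theta_{n-1} \circ \cdots \circ \theta_0)(g_0)$ equals the identity in $G_n$ for some sufficiently large $n$. Since a composition of isomorphisms is an isomorphism, it follows that $g_0$ itself is the identity in $G_0$. Combined with the previous step, $\iota_0$ is an isomorphism and the claim $\varinjlim_n G_n = G_0$ follows.

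I do not anticipate any substantive obstacle: the statement is essentially formal, following from the universal property of the colimit together with the observation that compositions of isomorphisms are isomorphisms. The only mild point of care is to keep track of the direction of the connecting maps $\theta_n$ and to invoke the correct form of the universal cocone identity when descending from $\iota_n$ to $\iota_0$.
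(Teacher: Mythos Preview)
Your argument is correct, but it proceeds differently from the paper's. You fix the concrete model of $\varinjlim_n G_n$ as $\bigsqcup_n G_n/\!\sim$ and then show that the structure map $\iota_0$ is a bijection. The paper instead verifies directly that $G_0$ satisfies the universal property of the colimit: it builds a cocone $G_n \to G_0$ using the inverses $(\theta_0)^{-1}\cdots(\theta_{n-1})^{-1}$, checks compatibility with the transition maps, and then shows that any other cocone $G_n \to H$ factors uniquely through $G_0$. Your approach is more hands-on and perhaps quicker once the explicit description of the colimit is granted; the paper's approach avoids invoking that description and works purely from the universal property, which is slightly cleaner categorically but requires checking both existence and uniqueness of the comparison map. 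Either is perfectly adequate for this elementary statement.
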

\begin{proof}
We first need to construct a commutative diagram.
    \[\xymatrix{G_n   \ar[dd]_{\theta_n} \ar[dr]^{\delta}&   \\ 
& G_0 \\ G_{n+1} \ar[ur]_{\psi} & }.\]
As $\theta_n$ is an isomorphism for all $n \geq 0$, we have inverses, so $\delta = (\theta_{0})^{-1} \ldots (\theta_{n-1})^{-1}$ and $\psi =  (\theta_0)^{-1} \ldots  (\theta_{n-1})^{-1} (\theta_n)^{-1}$ and hence the diagram commutes. 
Now we check that $G_0$ is unique. 
Suppose we have a group $H$ such that we have a group homomorphism $f\colon G_0 \rightarrow H$ which fits into the following diagram
    \[\xymatrix{ G_n \ar[dd]_{\theta_n} \ar[dr]_{\delta} \ar@/^/[drr]^{\mu_1} & & \\ 
&G_0 \ar@{.>}[r]|-{f} & H \\G_{n+1}\ \ar[ur]^{\psi} \ar@/_/[urr]_{\mu_{2}} & &}.\]
Then define $f= \mu_1 \delta^{-1}$ so our diagram commutes. Suppose that we have another group homomorphism $g \colon G_0 \rightarrow H$ fitting into the diagram. Then by commutativity we have $g \delta = \mu_1$, so $g = \mu_1 \delta^{-1} = f$ so $f$ is unique. 
\end{proof}
\begin{proposition}\label{directlimitofspectra2}
The direct limit $\varinjlim_n  E_g(A, \Sigma^{k+n} B \widehat{\otimes} \mathbb{F}_{n,0}) $ is $E_g(A, \Sigma^{k} B)$.
\end{proposition}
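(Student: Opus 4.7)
The plan is to combine graded Bott periodicity (Theorem~\ref{graded Bott map}) with the general direct-limit result of Proposition~\ref{isomorphismsinducedirectlimits}. First I would identify the connecting maps $\theta_n$ in the colimit system as being induced, up to natural isomorphism, by the Bott $\ast$-homomorphism $b \colon \mathcal{S} \to \Sigma \widehat{\otimes} \mathbb{F}_{1,0}$, matching the structure maps $\epsilon_m$ of the spectrum $\mathbb{X}(A,B)$ from Definition~\ref{E-theory spectrum} (this is the same kind of identification used implicitly when passing between stable homotopy groups of the spectrum and the $E$-theory groups).

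Next I would iterate the Bott isomorphism. Setting $C = \Sigma^{k+n} B \widehat{\otimes} \mathbb{F}_{n,0}$ and applying Theorem~\ref{graded Bott map} to $C$ gives
\[
E_g(A,\, \Sigma^{k+n} B \widehat{\otimes} \mathbb{F}_{n,0}) \;\cong\; E_g\bigl(A,\, \Sigma\bigl(\Sigma^{k+n} B \widehat{\otimes} \mathbb{F}_{n,0}\bigr) \widehat{\otimes} \mathbb{F}_{1,0}\bigr).
\]
Using the standard natural isomorphisms of the graded tensor product, together with the Clifford algebra identity $\mathbb{F}_{n,0} \widehat{\otimes} \mathbb{F}_{1,0} \cong \mathbb{F}_{n+1,0}$, the right-hand side is naturally isomorphic to $E_g(A,\, \Sigma^{k+n+1} B \widehat{\otimes} \mathbb{F}_{n+1,0})$. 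This produces an isomorphism
\[
\theta_n \colon E_g(A,\, \Sigma^{k+n} B \widehat{\otimes} \mathbb{F}_{n,0}) \xrightarrow{\;\cong\;} E_g(A,\, \Sigma^{k+n+1} B \widehat{\otimes} \mathbb{F}_{n+1,0})
\]
for each $n \geq 0$, which is exactly the map appearing in the direct system.

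With these isomorphisms in hand, Proposition~\ref{isomorphismsinducedirectlimits} applies directly to the sequence of abelian groups and gives
\[
\varinjlim_n E_g(A,\, \Sigma^{k+n} B \widehat{\otimes} \mathbb{F}_{n,0}) \;\cong\; E_g(A,\, \Sigma^{k} B \widehat{\otimes} \mathbb{F}_{0,0}) \;=\; E_g(A,\, \Sigma^{k} B),
\]
using $\mathbb{F}_{0,0} = \mathbb{R}$ so that $B \widehat{\otimes} \mathbb{F}_{0,0} = B$.

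The main obstacle I expect is the bookkeeping in the second step: making sure that the connecting maps $\theta_n$ of the given direct system really coincide (up to natural isomorphism) with the iterated Bott maps, rather than merely being abstract group homomorphisms between isomorphic groups. This amounts to verifying naturality of the Bott isomorphism in the $B$-slot and compatibility of the $\Sigma$--$\mathbb{F}_{n,0}$ reshuffling with the structure maps $\epsilon_m$ in Definition~\ref{E-theory spectrum}; once that identification is made, the rest is a formal application of Proposition~\ref{isomorphismsinducedirectlimits}.
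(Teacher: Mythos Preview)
Your proposal is correct and follows essentially the same approach as the paper: set $G_n = E_g(A, \Sigma^{k+n} B \widehat{\otimes} \mathbb{F}_{n,0})$, use Theorem~\ref{graded Bott map} to see that the connecting maps are isomorphisms, and then apply Proposition~\ref{isomorphismsinducedirectlimits}. The paper's proof is just the two-line version of what you wrote; your extra remarks about the Clifford identity $\mathbb{F}_{n,0}\widehat{\otimes}\mathbb{F}_{1,0}\cong\mathbb{F}_{n+1,0}$ and the compatibility with the structure maps $\epsilon_m$ are exactly the bookkeeping the paper leaves implicit.
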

\begin{proof}
This result follows from Proposition~\ref{isomorphismsinducedirectlimits} where $$G_n = E_g(A, \Sigma^{k+n} B \widehat{\otimes} \mathbb{F}_{n,0})$$ and using Proposition~\ref{graded Bott map}. 
\end{proof}

\begin{proposition}
For all positive integers $k$, 
\[ \pi_k \mathbb{X}(A,B) = E_g(A, \Sigma^k B).\] 
\end{proposition}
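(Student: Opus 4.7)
The plan is to unwind the definition of the stable homotopy group in stages, identify each stage with an $E$-theory group, and then apply the Bott-periodicity direct-limit result Proposition~\ref{directlimitofspectra2}. By definition of a stable homotopy group of a quasi-spectrum,
\[
\pi_k \mathbb{X}(A,B) \;=\; \varinjlim_n \pi_{k+n} X_n, \qquad X_n = \text{Asy}_g(\mathcal{S} \widehat{\otimes} A \widehat{\otimes} \mathcal{K},\, B \widehat{\otimes} \mathbb{F}_{n,0} \widehat{\otimes} \mathcal{K}),
\]
where the bonding maps in the direct system are induced by the structure maps $\epsilon_n \colon X_n \to \Omega X_{n+1}$ of Definition~\ref{E-theory spectrum}. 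So the problem splits into (a) identifying each term $\pi_{k+n} X_n$ with an $E$-theory group, and (b) identifying the transition maps with Bott-periodicity isomorphisms.

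For (a), I would iterate the standard loop/suspension adjunction Corollary~\ref{sigmaandomegaadjointinqts} together with the quasi-homeomorphism $\Omega \text{Asy}_g(A,B) \cong \text{Asy}_g(A, \Sigma B)$ (proved just before Definition~\ref{E-theory spectrum} and valid in the graded setting) to pass the $k{+}n$ loops across to the target of the asymptotic morphism space:
\[
\pi_{k+n} X_n \;=\; \pi_0\, \Omega^{k+n} X_n \;\cong\; \pi_0\, \text{Asy}_g\!\left( \mathcal{S} \widehat{\otimes} A \widehat{\otimes} \mathcal{K},\; \Sigma^{k+n} B \widehat{\otimes} \mathbb{F}_{n,0} \widehat{\otimes} \mathcal{K} \right).
\]
Next I would observe that path-components in the quasi-topology on $\text{Asy}_g$ (Definition~\ref{quasi-topology on Asy}) are exactly homotopy classes in the sense of asymptotic morphisms: a quasi-continuous map $[0,1] \to \text{Asy}_g(A,B)$ is, by the very definition of the quasi-topology, an element of $\text{Asy}_g(A, C([0,1],B))$, and this is precisely the data of a homotopy of graded asymptotic morphisms. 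Hence
\[
\pi_{k+n} X_n \;\cong\; \llbracket \mathcal{S} \widehat{\otimes} A \widehat{\otimes} \mathcal{K},\, \Sigma^{k+n} B \widehat{\otimes} \mathbb{F}_{n,0} \widehat{\otimes} \mathcal{K} \rrbracket \;=\; E_g(A,\, \Sigma^{k+n} B \widehat{\otimes} \mathbb{F}_{n,0}).
\]

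For (b), I would chase through the definition of $\epsilon_n$ to see that, after applying the adjunction identifications, the induced map on $\pi_0$ is precisely composition with $b \widehat{\otimes} \text{id}$, where $b \colon \mathcal{S} \to \Sigma \widehat{\otimes} \mathbb{F}_{1,0}$ is the Bott map of Theorem~\ref{graded Bott map}; combined with $\Delta$, this is exactly the map inducing the Bott isomorphism $E_g(A, \Sigma^{k+n} B \widehat{\otimes} \mathbb{F}_{n,0}) \cong E_g(A, \Sigma^{k+n+1} B \widehat{\otimes} \mathbb{F}_{n+1,0})$. Having identified the direct system with the Bott system, Proposition~\ref{directlimitofspectra2} finishes the argument, giving
\[
\pi_k \mathbb{X}(A,B) \;=\; \varinjlim_n E_g(A, \Sigma^{k+n} B \widehat{\otimes} \mathbb{F}_{n,0}) \;=\; E_g(A, \Sigma^k B).
\]

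The main obstacle is bookkeeping rather than conceptual: verifying that the $\pi_0$ of the quasi-topological asymptotic-morphism space coincides with the $E$-theory set of homotopy classes (as opposed to some weaker equivalence relation coming from quasi-continuous paths), and then checking that under the adjunction chain the structure map $\epsilon_n$ really recovers composition with the Bott map $b$ rather than some shifted or twisted variant. Both come down to carefully tracking Definition~\ref{quasi-topology on Asy} and Definition~\ref{E-theory spectrum}, but neither requires new ideas beyond what is already developed in Sections~1 and~2.
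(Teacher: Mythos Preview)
Your proposal is correct and follows essentially the same route as the paper's proof: unwind $\pi_{k+n}X_n$ as $\pi_0\Omega^{k+n}X_n$, use $\Omega\text{Asy}_g(A,B)\cong\text{Asy}_g(A,\Sigma B)$ iteratively, identify $\pi_0$ with $\llbracket-,-\rrbracket$, rewrite as $E_g(A,\Sigma^{k+n}B\widehat{\otimes}\mathbb{F}_{n,0})$, and then invoke Proposition~\ref{directlimitofspectra2}. Your version is somewhat more explicit than the paper's about why the bonding maps match the Bott isomorphisms and why $\pi_0$ of the quasi-topology recovers asymptotic-homotopy classes, but these are exactly the justifications underlying the paper's chain of equalities.
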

\begin{proof}
Since $\mathbb{X}$ is an orthogonal quasi-spectrum we have that 
\[ \pi_k \mathbb{X}(A,B) = \varinjlim_n \pi_{k+n} E_n. \]
Then
\begin{align*}
 \varinjlim_n \pi_{k+n} X_n 
& =  \varinjlim_n \pi_0 \Omega^{k+n}\text{Asy}_g (\mathcal{S} \widehat{\otimes} A \widehat{\otimes} \mathcal{K}(\mathcal{H}), B \widehat{\otimes} \mathbb{F}_{n,0} \widehat{\otimes} \mathcal{K}(\mathcal{H})) \\
& = \varinjlim_n \pi_0 \text{Asy}_g (\mathcal{S} \widehat{\otimes} A \widehat{\otimes} \mathcal{K}(\mathcal{H}), \Sigma^{k+n}  B \widehat{\otimes} \mathbb{F}_{n,0} \widehat{\otimes} \mathcal{K}(\mathcal{H})) \\
& = \varinjlim_n  \llbracket \mathcal{S} \widehat{\otimes} A \widehat{\otimes} \mathcal{K}(\mathcal{H}), \Sigma^{k+n} B \widehat{\otimes} \mathbb{F}_{n,0} \widehat{\otimes}  \mathcal{K}(\mathcal{H}) \rrbracket \\
& =  \varinjlim_n  E_g(A, \Sigma^{k+n} B \widehat{\otimes} \mathbb{F}_{n,0}) \\ 
 & = E_g(A,\Sigma^{k}B) ~ \text{by Proposition~\ref{directlimitofspectra2}}. \\
\end{align*}
\end{proof}

\begin{proposition}
The orthogonal quasi-spectrum $\mathbb{X}(A,B)$ is an $\Omega$-quasi-spectrum. 
\end{proposition}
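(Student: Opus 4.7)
The plan is to verify the definition of $\Omega$-quasi-spectrum directly: for every $m \geq 0$ show that the structure map $\epsilon_m \colon X_m \to \Omega X_{m+1}$ is a weak equivalence. Since weak equivalence of quasi-topological spaces means inducing isomorphisms on all homotopy groups, the strategy is to compute $\epsilon_{m*}$ on $\pi_k$ for each $k \geq 0$ and identify the result with the Bott periodicity isomorphism of Theorem~\ref{graded Bott map}.

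First I would fix $k \geq 0$ and compute both sides. Using the loop-suspension adjunction on quasi-topological spaces (Corollary~\ref{sigmaandomegaadjointinqts}) together with the quasi-homeomorphism $\Omega \text{Asy}_g(A,B) \cong \text{Asy}_g(A, \Sigma B)$ established earlier, and the identification $\pi_0 \text{Asy}_g(C,D) = \llbracket C, D \rrbracket$, I would obtain
\[
\pi_k X_m = E_g(A, \Sigma^k B \widehat{\otimes} \mathbb{F}_{m,0}), \qquad \pi_k \Omega X_{m+1} = \pi_{k+1} X_{m+1} = E_g(A, \Sigma^{k+1} B \widehat{\otimes} \mathbb{F}_{m+1,0}).
\]
These are exactly the two groups related by one application of graded Bott periodicity.

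Next I would trace the formula for $\epsilon_m$ given in Definition~\ref{E-theory spectrum}, namely
\[
\epsilon(x_t) = (b \widehat{\otimes} \text{id}) \circ (\text{id}_{\mathcal{S}} \widehat{\otimes} x_t) \circ (\Delta \widehat{\otimes} \text{id}),
\]
through the identifications above. The left factor $b \widehat{\otimes} \text{id}$ is postcomposition with the Bott $\ast$-homomorphism $b \colon \mathcal{S} \to \Sigma \widehat{\otimes}\mathbb{F}_{1,0}$, and the right factor $\Delta \widehat{\otimes} \text{id}$ is precomposition with the comultiplication on $\mathcal{S}$ used to set up $E$-theory products. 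Invoking Lemma~\ref{tensorequivalence} to commute the middle tensoring past the outer pieces, I would show that the induced map $\epsilon_{m*}$ on homotopy classes agrees with the map on $E_g(A,-)$ obtained by $b$-composition in the second variable, which is precisely the isomorphism asserted by Theorem~\ref{graded Bott map} (applied with target $\Sigma^k B \widehat{\otimes} \mathbb{F}_{m,0}$ in place of $B$).

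The main obstacle is the middle identification: one must check carefully that the $\Delta$-comultiplication together with the identity tensor $x_t$ assembles the Bott map on the target side into the particular natural transformation used to realise Bott periodicity in Theorem~\ref{graded Bott map}, rather than some a priori distinct variant (the combinatorics of the graded tensor product, the shuffling of the $\Sigma$ and $\mathbb{F}_{m,0}$ factors, and signs coming from the grading all need to be accounted for). Once this matching is done, each $\epsilon_{m*}$ is an isomorphism on every $\pi_k$, so each $\epsilon_m$ is a weak equivalence and $\mathbb{X}(A,B)$ is an $\Omega$-quasi-spectrum.
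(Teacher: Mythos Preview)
Your proposal is correct and follows essentially the same approach as the paper: identify $\pi_k X_m$ and $\pi_k \Omega X_{m+1}$ with the $E$-theory groups $E_g(A,\Sigma^k B \widehat{\otimes}\mathbb{F}_{m,0})$ and $E_g(A,\Sigma^{k+1} B \widehat{\otimes}\mathbb{F}_{m+1,0})$, and then invoke Theorem~\ref{graded Bott map}. The paper's own argument is in fact briefer than yours---it simply asserts that the induced map on $\pi_k$ is the Bott periodicity isomorphism without the careful tracing of $\Delta$, $b$, and the graded tensor shuffles that you flag as the ``main obstacle''---so your extra caution there is more thoroughness than the paper demands, not a deviation in strategy.
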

\begin{proof}
We just need to check that the structure map $ \epsilon \colon E_n \rightarrow \Omega E_{n+1}$ is a weak equivalence. 
That is the map $\pi_k E_n \rightarrow \pi_k \Omega E_{n+1}$ is an isomorphism for all $k$. Now this gives us the map: 
\[ E_g(A, \Sigma^{k} (B \widehat{\otimes} \mathbb{F}_{n,0})) \rightarrow E_g(A, \Sigma^{k+1} (B\widehat{\otimes} \mathbb{F}_{n+1,0})),\]
which is an isomorphism for all $k$ by Theorem~\ref{graded Bott map}.
\end{proof}

\begin{thm}
Let $A,B$ and $C$ be graded $C^\ast$-algebras. Then there is a natural map of orthogonal quasi-spectra
\[\mu_{m,n}\colon \mathbb{X}(A,B) \wedge \mathbb{X}(B,C) \rightarrow \mathbb{X}(A,C),\]
defined by 
\[ (\alpha \wedge \beta)_t \mapsto (\beta_{r(t)} \widehat{\otimes} \text{id}_{\mathbb{F}_{m,0}}) \circ (\text{id}_{\mathcal{S}} \widehat{\otimes}\alpha_t) \circ (\Delta \widehat{\otimes} \text{id}_{A \widehat{\otimes} \mathcal{K}}), \] 
where $\alpha \in \text{Asy}_g(\mathcal{S} \widehat{\otimes} A \widehat{\otimes} \mathcal{K}, B \widehat{\otimes} \mathbb{F}_{m,0} \widehat{\otimes}\mathcal{K})$ and $\beta \in \text{Asy}_g(\mathcal{S} \widehat{\otimes} B \widehat{\otimes}\mathcal{K}, C \widehat{\otimes} \mathbb{F}_{n,0} \widehat{\otimes} \mathcal{K})$. 
In addition the product is associative up to homotopy. 
\end{thm}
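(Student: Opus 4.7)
The plan is to verify in turn that the formula defines a graded asymptotic morphism into the correct target, that it is quasi-continuous and equivariant, that it is compatible with the structure maps (so it assembles to a map of quasi-spectra), and finally to establish associativity up to homotopy.

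First I would unpack the composition. Starting from $\mathcal{S}\widehat{\otimes} A\widehat{\otimes}\mathcal{K}$, the map $\Delta\widehat{\otimes}\mathrm{id}$ lands in $\mathcal{S}\widehat{\otimes}\mathcal{S}\widehat{\otimes} A\widehat{\otimes}\mathcal{K}$, then $\mathrm{id}_{\mathcal{S}}\widehat{\otimes}\alpha_t$ lands in $\mathcal{S}\widehat{\otimes} B\widehat{\otimes}\mathbb{F}_{m,0}\widehat{\otimes}\mathcal{K}$; after the graded flip interchanging $\mathbb{F}_{m,0}$ and $\mathcal{K}$, one applies $\beta_{r(t)}\widehat{\otimes}\mathrm{id}_{\mathbb{F}_{m,0}}$ to arrive in $C\widehat{\otimes}\mathbb{F}_{n,0}\widehat{\otimes}\mathcal{K}\widehat{\otimes}\mathbb{F}_{m,0}$, which identifies with $C\widehat{\otimes}\mathbb{F}_{m+n,0}\widehat{\otimes}\mathcal{K}$ after using $\mathbb{F}_{n,0}\widehat{\otimes}\mathbb{F}_{m,0}\cong\mathbb{F}_{m+n,0}$ and an isomorphism $\mathcal{K}\widehat{\otimes}\mathcal{K}\cong\mathcal{K}$ (absorbed in the $\mathcal{K}$-factor, as customary in the Connes--Higson setup). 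The reparametrization $r(t)$ is the standard one from \cite{CH90,GHT00}, chosen large enough that each asymptotic relation of $\beta_{r(t)}$ is uniformly controlled on the image of $\alpha_t$ restricted to compact subsets; this guarantees that the composite defines a genuine graded asymptotic morphism, with homotopy class independent of $r$.

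Next I would verify that $\mu_{m,n}$ is quasi-continuous and $O(m)\times O(n)$-equivariant. Quasi-continuity follows from repeated application of Proposition~\ref{tensorwithanasymptoticmorphismisqc} (applied once with $D=\mathcal{S}$ for the $\alpha$-variable and once with $D=\mathbb{F}_{m,0}$ for the $\beta$-variable), combined with the fact that composition with the fixed $\ast$-homomorphism $\Delta\widehat{\otimes}\mathrm{id}$ is continuous. Equivariance is handled exactly as in Proposition~\ref{mapisqcandequivariant}: the $O(m)$-action enters only through the $\mathbb{F}_{m,0}$-factor produced by $\alpha$, and the $O(n)$-action only through the $\mathbb{F}_{n,0}$-factor produced by $\beta$, so they act independently on disjoint tensor factors and combine into the action of $O(m)\times O(n)\subset O(m+n)$. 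Compatibility with the structure maps $\sigma_m,\sigma_n$ is then a diagrammatic check which reduces, via Lemma~\ref{tensorequivalence}, to the statement that inserting an extra Bott map $b$ on either factor commutes with the other factor; naturality in $A,B,C$ is routine by tracking graded $\ast$-homomorphisms through the formula.

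The main obstacle is associativity up to homotopy. Writing out both $\mu\circ(\mu\wedge\mathrm{id})$ and $\mu\circ(\mathrm{id}\wedge\mu)$ on a triple $(\alpha,\beta,\gamma)$, the difference is entirely encoded in whether one applies $(\Delta\widehat{\otimes}\mathrm{id}_{\mathcal{S}})\circ\Delta$ or $(\mathrm{id}_{\mathcal{S}}\widehat{\otimes}\Delta)\circ\Delta$ as a graded $\ast$-homomorphism $\mathcal{S}\to\mathcal{S}^{\widehat{\otimes}3}$. I would construct a homotopy between these two composites using functional calculus with $X_R\widehat{\otimes}1\widehat{\otimes}1+1\widehat{\otimes}X_R\widehat{\otimes}1+1\widehat{\otimes}1\widehat{\otimes}X_R$ (as in \cite{GH04,Bro}), then transport this homotopy through the quasi-continuous operations of tensoring with identities and composing with $\alpha_t,\beta_{s},\gamma_u$, invoking Lemma~\ref{tensorequivalence} to interchange the orders in which $\alpha,\beta,\gamma$ are applied and a diagonal argument on $[1,\infty)$ to absorb the three reparametrizations into one. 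The delicate point is keeping track of the three reparametrizations simultaneously so that the resulting family remains a graded asymptotic morphism into $C([0,1],\,C\widehat{\otimes}\mathbb{F}_{m+n+p,0}\widehat{\otimes}\mathcal{K})$; once this is arranged, the desired homotopy between the two associativity composites follows.
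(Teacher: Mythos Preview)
Your verification of the target, quasi-continuity, $O(m)\times O(n)$-equivariance, and compatibility with the structure maps matches the paper's argument closely: the paper also reduces equivariance to the action on the Clifford factors and checks the two structure-map squares by an explicit computation whose key step is precisely Lemma~\ref{tensorequivalence}.

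Where you diverge is in the treatment of associativity up to homotopy. The paper does not construct the homotopy by hand: it simply passes to homotopy classes, identifies the resulting triple composite with the $E$-theory product $E_g(A,B)\times E_g(B,C)\times E_g(C,D)\to E_g(A,D)$, and invokes the already-established associativity of that product. Your route instead reproves this associativity from scratch, reducing it to the coassociativity of $\Delta$ (which is indeed exact in the Guentner--Higson functional-calculus construction, so the only remaining discrepancy is in the reparametrizations) and then absorbing the three reparametrizations via the standard diagonal argument. Your approach is self-contained and yields a slightly sharper statement (an explicit homotopy rather than an appeal to an external result), at the cost of having to manage the reparametrization bookkeeping you flag as delicate; the paper's approach is shorter but relies on the reader accepting associativity of the $E$-theory product as known.
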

\begin{proof}
The product gives a natural $O(m) \times O(n)$-equivariant map: 
\begin{align*}
& \text{Asy}_g (\mathcal{S} \widehat{\otimes} A \widehat{\otimes} \mathcal{K}, B \widehat{\otimes} \mathbb{F}_{m,0} \widehat{\otimes} \mathcal{K}) \wedge \text{Asy}_g (\mathcal{S} \widehat{\otimes} B \widehat{\otimes} \mathcal{K}, C \widehat{\otimes} \mathbb{F}_{n,0} \widehat{\otimes} \mathcal{K}) \\
& \phantom{Magdalini Flari} \longrightarrow \text{Asy}_g (\mathcal{S} \widehat{\otimes} A \widehat{\otimes} \mathcal{K}, C \widehat{\otimes} \mathbb{F}_{m+n,0} \widehat{\otimes} \mathcal{K}), \\
\end{align*}
given by permuting the $m$, $n$ and $m+n$ copies of $\mathbb{F}_{1,0}$.

Now compatibility with the structure maps follows from the naturality of the structure maps and also since we have the following two diagrams: 

\[\xymatrixcolsep{3pc}\xymatrixrowsep{3pc}\xymatrix{
\mathbb{X}_m(A,B) \wedge \mathbb{X}_n(B,C) \ar[d]^-{\epsilon \wedge \text{id}} \ar[r]^-{\mu_{m,n}} &  \mathbb{X}_{m+n}(A,C) \ar[d]^-{\epsilon}\\
\Omega\mathbb{X}_{m+1}(A,B) \wedge \mathbb{X}_n(B,C) \ar[r]^-{\mu_{m+1,n}} & \Omega \mathbb{X}_{m+n+1}(A,C),}\]
and  
\[\xymatrixcolsep{3pc}\xymatrixrowsep{3pc}\xymatrix{
\mathbb{X}_m(A,B) \wedge \mathbb{X}_n(B,C) \ar[d]^-{\text{id} \wedge \epsilon} \ar[r]^-{\mu_{m,n}} &  \mathbb{X}_{m+n}(A,C) \ar[d]^-{\epsilon}\\
\mathbb{X}_{m}(A,B) \wedge \Omega\mathbb{X}_{n+1}(B,C) \ar[r]^-{\mu_{m,n+1}} & \Omega \mathbb{X}_{m+n+1}(A,C),}\]
where $\text{id}$ denote the obvious identities, and the $\epsilon$'s denote the required structure maps. These diagrams commute since,

\begin{eqnarray*}
\lefteqn{\mu_{m+1,n}(\epsilon \wedge \text{id})(\alpha \wedge \beta)_t =  \mu_{m+1,n}(\epsilon(\alpha) \wedge \beta)_t} \\
&=&  (\beta_{r(t)} \widehat{\otimes} \text{id}_{\Sigma  \mathbb{F}_{m+1,0}}) \circ (\text{id}_{\mathcal{S}} \widehat{\otimes} \epsilon(\alpha_t)) \circ (\Delta \widehat{\otimes} \text{id}_{A \widehat{\otimes} \mathcal{K}}) \\
&=&  (\beta_{r(t)} \widehat{\otimes} \text{id}_{\Sigma \mathbb{F}_{m+1,0}}) \circ (\text{id}_{\mathcal{S}} \widehat{\otimes}\left[ (b \widehat{\otimes} \text{id}_{B \widehat{\otimes} \mathbb{F}_{m,0} \widehat{\otimes} \mathcal{K}}) \circ (\text{id}_{\mathcal{S}} \widehat{\otimes} \alpha_t) \circ  (\Delta \widehat{\otimes} \text{id}_{A \widehat{\otimes} \mathcal{K}})\right]) \circ (\Delta \widehat{\otimes} \text{id}_{A \widehat{\otimes} \mathcal{K}}) \\
& =& ( \text{id}_{\Sigma  \mathbb{F}_{1,0}} \widehat{\otimes}\beta_{r(t)}\widehat{\otimes} \text{id}_{\mathbb{F}_{m,0}}) \circ  (b \widehat{\otimes} \text{id}_{\mathcal{S} \widehat{\otimes} B \widehat{\otimes} \mathbb{F}_{m,0} \widehat{\otimes} \mathcal{K}}) \circ (\text{id}_{\mathcal{S} \widehat{\otimes} \mathcal{S}} \widehat{\otimes} \alpha_t)   \circ  (\Delta \widehat{\otimes} \text{id}_{\mathcal{S} \widehat{\otimes} A \widehat{\otimes} \mathcal{K}}) \circ (\Delta \widehat{\otimes} \text{id}_{A \widehat{\otimes} \mathcal{K}})
\\
& =& (b \widehat{\otimes} \text{id}_{C \widehat{\otimes} \mathbb{F}_{m+n,0}\widehat{\otimes} \mathcal{K}}) \circ (\text{id}_{\mathcal{S}} \widehat{\otimes} \beta_{r(t)}\widehat{\otimes} \text{id}_{\mathbb{F}_{m,0}}) \circ (\text{id}_{\mathcal{S} \widehat{\otimes} \mathcal{S}} \widehat{\otimes}\alpha_t)   \\
&& {\phantom{Thanks Mag}} \circ (\Delta \widehat{\otimes} \text{id}_{\mathcal{S} \widehat{\otimes}A \widehat{\otimes} \mathcal{K}})\circ (\Delta \widehat{\otimes} \text{id}_{A \widehat{\otimes} \mathcal{K}})~ \text{by Lemma~\ref{tensorequivalence}}, \\
& =& (b \widehat{\otimes} \text{id}_{C \widehat{\otimes} \mathbb{F}_{m+n,0}\widehat{\otimes} \mathcal{K}}) \circ (\text{id}_{\mathcal{S}} \widehat{\otimes} \\
&& {\phantom{}} \left[(\beta_{r(t)} \widehat{\otimes} \text{id}_{\mathbb{F}_{m,0}}) \circ (\text{id}_{\mathcal{S}} \widehat{\otimes}\alpha_t) \circ (\Delta \widehat{\otimes} \text{id}_{A \widehat{\otimes} \mathcal{K}})\right] ) \circ (\Delta \widehat{\otimes} \text{id}_{A \widehat{\otimes} \mathcal{K}}) \\
& =& \epsilon( (\beta_{r(t)} \widehat{\otimes} \text{id}_{\mathbb{F}_{m,0}}) \circ (\text{id}_{\mathcal{S}} \widehat{\otimes}\alpha_t) \circ (\Delta \widehat{\otimes} \text{id}_{A \widehat{\otimes} \mathcal{K}})) \\
& =& \epsilon(\mu_{m,n}) (\alpha \wedge \beta)_t, \\
\end{eqnarray*}
and
\begin{eqnarray*}
\lefteqn{\mu_{m,n+1}(\text{id} \wedge_g \epsilon)(\alpha \wedge_g \beta) _t = \mu_{m,n+1}(\alpha \wedge \epsilon(\beta))_t} \\
& = &(\epsilon(\beta)_{r(t)} \widehat{\otimes} \text{id}_{\mathbb{F}_{m,0}}) \circ (\text{id}_{\mathcal{S}} \widehat{\otimes}\alpha_t) \circ (\Delta \widehat{\otimes} \text{id}_{A \widehat{\otimes} \mathcal{K}})\\ 
& =& \left[(b \widehat{\otimes} \text{id}_{C \widehat{\otimes} \mathbb{F}_{n,0} \widehat{\otimes} \mathcal{K}}) \circ (\text{id}_{\mathcal{S}} \widehat{\otimes} \beta_{r(t)}  \circ (\Delta \widehat{\otimes} \text{id}_{B \widehat{\otimes} \mathcal{K}})\right]\widehat{\otimes} \text{id}_{\mathbb{F}_{m,0}}) \circ (\text{id}_{\mathcal{S}} \widehat{\otimes}\alpha_t) \circ (\Delta \widehat{\otimes} \text{id}_{A \widehat{\otimes} \mathcal{K}})\\ 
& = & (b \widehat{\otimes} \text{id}_{C \widehat{\otimes} \mathbb{F}_{m+n,0}\widehat{\otimes} \mathcal{K}}) \circ (\text{id}_{\mathcal{S}} \widehat{\otimes} \beta_{r(t)}\widehat{\otimes} \text{id}_{\mathbb{F}_{m,0}}) \circ  (\Delta \widehat{\otimes} \text{id}_{B \widehat{\otimes} \mathbb{F}_{m,0} \widehat{\otimes} \mathcal{K}}) \circ (\text{id}_{\mathcal{S}} \widehat{\otimes}\alpha_t) \circ (\Delta \widehat{\otimes} \text{id}_{A \widehat{\otimes} \mathcal{K}})\\ 
& =& (b \widehat{\otimes} \text{id}_{C \widehat{\otimes} \mathbb{F}_{m+n,0}\widehat{\otimes} \mathcal{K}}) \circ (\text{id}_{\mathcal{S}} \widehat{\otimes} \beta_{r(t)}\widehat{\otimes} \text{id}_{\mathbb{F}_{m,0}}) \circ (\text{id}_{\mathcal{S} \widehat{\otimes} \mathcal{S}} \widehat{\otimes}\alpha_t)  \\
& &{\phantom{Thanks Mag}}\circ (\Delta \widehat{\otimes} \text{id}_{\mathcal{S} \widehat{\otimes}A \widehat{\otimes} \mathcal{K}}) \circ (\Delta \widehat{\otimes} \text{id}_{A \widehat{\otimes} \mathcal{K}})~ \text{by Lemma~\ref{tensorequivalence}},\\
& =& (b \widehat{\otimes} \text{id}_{C \widehat{\otimes} \mathbb{F}_{m+n,0}\widehat{\otimes} \mathcal{K}}) \circ (\text{id}_{\mathcal{S}} \widehat{\otimes}  \left[(\beta_{r(t)} \widehat{\otimes} \text{id}_{\mathbb{F}_{m,0}}) \circ (\text{id}_{\mathcal{S}} \widehat{\otimes}\alpha_t) \circ (\Delta \widehat{\otimes} \text{id}_{A \widehat{\otimes} \mathcal{K}})\right] ) \circ (\Delta \widehat{\otimes} \text{id}_{A \widehat{\otimes} \mathcal{K}})\\
& =& \epsilon( (\beta_{r(t)} \widehat{\otimes} \text{id}_{\mathbb{F}_{m,0}}) \circ (\text{id}_{\mathcal{S}} \widehat{\otimes}\alpha_t) \circ (\Delta \widehat{\otimes} \text{id}_{A \widehat{\otimes} \mathcal{K}})) \\
& =& \epsilon(\mu_{m,n}) (\alpha \wedge \beta)_t ,\\
\end{eqnarray*}
for all $\alpha \wedge \beta \in \mathbb{E}_m(A,B) \wedge \mathbb{E}_n(B,C)$.


Now we check that our product is associative up to homotopy. 

Let $\alpha \in \text{Asy}_g (\mathcal{S} \widehat{\otimes} A \widehat{\otimes} \mathcal{K},  B \widehat{\otimes} \mathbb{F}_{m,0} \widehat{\otimes} \mathcal{K})$ and $\beta \in \text{Asy}_g (\mathcal{S} \widehat{\otimes}  B \widehat{\otimes} \mathcal{K}, C \widehat{\otimes} \mathbb{F}_{n,0} \widehat{\otimes} \mathcal{K})$ and $\gamma \in \text{Asy}_g (\mathcal{S} \widehat{\otimes}  C \widehat{\otimes} \mathcal{K},  D \widehat{\otimes} \mathbb{F}_{p,0} \widehat{\otimes} \mathcal{K})$. Then take the homotopy classes of these elements and we obtain $E$-theory groups, and we know that the $E$-theory product is associative. 

\end{proof}

\section{Connecting graded $K$ and $E$-theory spectra}\label{$K$-theory}
This section connects together graded $K$-theory and $E$-theory spectra. In particular we form a smash product in terms of these two spectra and consequently combine $K$-theory and $K$-homology in a smash product. 
Here we obtain a connection between $K$-theory, $K$-homology and $E$-theory spectra. 
Let $\text{Hom}(A,B)$ denote the set of graded $\ast$-homomorphisms from $A$ to $B$. Then recall that $$K_n(A) = [\mathcal{S}, \Sigma^n A \widehat{\otimes} \mathcal{K}(\mathcal{H})] \cong E^n_g(\mathbb{F}, A),$$ 
and the $K$-homology is given by
$$\mathbb{K}_{\text{hom}}(A) = E_n(A, \mathbb{F}).$$

\subsection{A topology on graded $\ast$-homomorphisms}

Let $\text{Hom}_g(A,B)$ denote the set of graded $\ast$-homomorphisms from $A$ to $B$.
We equip $\text{Hom}_g(A,B)$ with the compact open topology as detailed below.

\begin{definition}
A \emph{basis} for a topology on a set $A$ is a collection of subsets $\mathcal{A}$ of $A$ such that $A$ is a union of sets from $\mathcal{A}$ and such that if $A_1, A_2$ are in $\mathcal{A}$ then their intersection is a union of sets from $\mathcal{A}$.  

A \emph{subbasis} is a collection of subsets $\mathcal{B}$ of $A$ where the set $\mathcal{A}$ of all finite intersections of sets in $\mathcal{B}$ is a basis. 
\end{definition}
\begin{definition}
Let $A$ and $B$ be graded $C^\ast$-algebras. The \emph{compact open topology} on the set of graded $\ast$-homomorphisms from $A$ to $B$, $\text{Hom}_g(A, B)$, is generated by subsets of the following form, 
\[B(K,U) = \{f  \in \text{Hom}_g(A, B)~| ~f(K) \subset U\},\]
where $K$ is compact in $A$ and $U$ is open in $B$. 
Here \emph{generated} means that the sets defined form a subbasis for the open sets when we think of a topology. This then generates a basis for the topology. 
\end{definition}

For simplicity of notation let $\text{Hom}_g( A, B)$ denote the space of graded $\ast$-homomorphisms from $A$ to $B$ equipped with the compact open topology. 
Denote the loop space of this space by $\Omega \text{Hom}_g( A, B)$. Note that the basepoints for both of these spaces is just the zero $\ast$-homomorphism, which will we denote by $0$.

The compact open topology is the choice for our topology since it gives us the correct path components for our loop space and it also allows us to have continuity of particular maps as we will see soon. 

Now let us consider the generators of the compact open topology on the spaces $\Omega \text{Hom}_g( A, B )$ and $\text{Hom}_g (A, \Sigma B)$. 
Now we have a basis for $\text{Hom}_g( A, B)$, so we just extend this for  $\Omega \text{Hom}_g ( A, B )$, and it is not to hard to see that a basis for the loop space is the set generated by $B(K', V)$ such that $K' \subseteq [0,1]$ compact and $V \subseteq \text{Hom}_g(A,B)$ open.

Combining these, we obtain the following definition. 

\begin{definition}
The \emph{compact open topology} on $\Omega \text{Hom}_g( A, B )$ is generated by sets of the form $B(K', B(K,U))$, where $K \subseteq A$ compact, $K' \subseteq [0,1]$ compact and $U \subseteq B$ open. 
The \emph{compact open topology} on $\text{Hom}_g (A, \Sigma B)$ is generated by sets of the form $B(K, B(K',U))$ where $K \subseteq A$ compact, $K' \subseteq [0,1]$ compact  and $U \subseteq B$ open. 
\end{definition}

Before we check we have the continuity of maps in the following proof, it is worth noting that it is sufficient to check that a map is continuous under a topology by considering a subbasis. That is, to check a map of topological spaces is continuous we just need to check that continuity holds at the level of generating sets for a basis of a topology. For details, see~\cite{Sut75}, Application 3.2.5.

\begin{proposition}
The spaces $\Omega \text{Hom}_g( A, B )$ and $\text{Hom}_g (A, \Sigma B)$ are homeomorphic. 
\end{proposition}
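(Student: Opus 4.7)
The plan is to produce the homeomorphism by the standard exponential/currying adjunction and then exploit the explicit subbases defining both compact--open topologies to match open sets under the bijection.

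First I would define the candidate maps. Given $\phi \in \Omega\,\text{Hom}_g(A,B)$, that is a basepoint-preserving continuous map $\phi \colon [0,1] \to \text{Hom}_g(A,B)$ with $\phi(0) = \phi(1) = 0$, I would set
\[ \Phi(\phi)(a)(t) = \phi(t)(a), \qquad a \in A,\ t \in [0,1]. \]
In the other direction, for $\psi \in \text{Hom}_g(A,\Sigma B)$ I would set
\[ \Psi(\psi)(t)(a) = \psi(a)(t). \]
The next step is well-definedness. For a fixed $\phi$, the map $t \mapsto \phi(t)(a)$ is continuous in $t$ (evaluation in the compact--open topology is continuous on compact Hausdorff test sets) and vanishes at $0$ and $1$, so $\Phi(\phi)(a) \in \Sigma B$; the map $a \mapsto \Phi(\phi)(a)$ is linear, multiplicative, $*$-preserving and grading-preserving pointwise in $t$ because each $\phi(t)$ has these properties, hence $\Phi(\phi) \in \text{Hom}_g(A,\Sigma B)$. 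A symmetric check shows $\Psi(\psi)(t) \in \text{Hom}_g(A,B)$ for every $t$, with $\Psi(\psi)(0) = \Psi(\psi)(1) = 0$, and the correct vanishing at the endpoints of $[0,1]$ means $\Psi(\psi)$ is basepoint-preserving; continuity of $\Psi(\psi)$ as a function $[0,1] \to \text{Hom}_g(A,B)$ is checked on the subbasis $B(K,U)$ of the compact--open topology on $\text{Hom}_g(A,B)$ below. That $\Phi$ and $\Psi$ are mutually inverse is immediate from the defining formulas.

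The core of the argument is matching topologies. Here I would use the fact recorded just before the statement: to verify continuity of a map it suffices to check preimages of a subbasis. The subbasis of $\text{Hom}_g(A,\Sigma B)$ consists of sets $B(K, B(K',U))$ with $K \subseteq A$ compact, $K' \subseteq [0,1]$ compact and $U \subseteq B$ open; the subbasis of $\Omega\,\text{Hom}_g(A,B)$ consists of $B(K', B(K,U))$ with the same data. The elementary identity
\[ \Phi(\phi) \in B(K, B(K', U)) \iff \Phi(\phi)(K)(K') \subseteq U \iff \phi(K')(K) \subseteq U \iff \phi \in B(K', B(K,U)) \]
shows that $\Phi^{-1}\bigl(B(K, B(K',U))\bigr) = B(K', B(K,U))$, so $\Phi$ is continuous; the analogous identity for $\Psi$ gives continuity of $\Psi$. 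Combined with the fact that $\Phi$ and $\Psi$ are inverse bijections, this yields the required homeomorphism.

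The main obstacle I anticipate is simply being careful that a subbasis really does suffice for continuity in both directions, and that $\Psi(\psi)$ really is continuous into the compact--open topology of $\text{Hom}_g(A,B)$; both issues reduce to the subbasis identification above, which is why I want to phrase the whole argument at the level of the generating sets rather than attempting to invoke a general exponential law (where local compactness hypotheses would otherwise need to be checked).
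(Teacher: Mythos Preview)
Your proposal is correct and follows essentially the same route as the paper: define the currying bijection $\Phi(\phi)(a)(t)=\phi(t)(a)$ and its inverse, then verify continuity in both directions by showing $\Phi^{-1}\bigl(B(K,B(K',U))\bigr)=B(K',B(K,U))$ on the explicit subbases for the two compact--open topologies. If anything, you are slightly more careful than the paper about well-definedness (checking that $\Phi(\phi)$ really lands in $\text{Hom}_g(A,\Sigma B)$ and that $\Psi(\psi)$ is continuous into $\text{Hom}_g(A,B)$), but the strategy is identical.
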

\begin{proof}
We consider ungraded $\ast$-homomorphisms since the grading property is immediate. 
Define $f: \Omega \text{Hom}( A, B ) \rightarrow\text{Hom} (A, \Sigma B)$ as follows. Let $\mu \in \Omega \text{Hom}( A, B )$ based at $0$, then  define 
\[ f(\mu) (a)(s) =  \mu(s)(a),\]
for all $a \in A$ and $s \in [0,1]$. 

Now define $g:  \text{Hom} (A, \Sigma B) \rightarrow  \Omega \text{Hom}( A, B )$ as follows. Let $\tau \in  \text{Hom} (A, \Sigma B)$, define
\[g( \tau )(s)(a) = \tau(a)(s).\] 

Both $f$ and $g$ are well defined since $\mu$ and $\tau$ are $\ast$-homomorphisms. 

We need to show that $f \circ g = id$, and $g \circ f = id$ where $id$ stands for the natural identities. 

Let $\varphi \in \text{Hom} (A, \Sigma B)$, then for all $a \in A, s \in [0,1]$. 
\[fg(\varphi) (a)(s) = g(\varphi)(s)(a) = \varphi(a)(s).\] 

Similarly, let $\psi \in \Omega \text{Hom}( A, B )$, then for all $a \in A, s \in [0,1]$, 
\[gf(\psi)(s)(a) = f(\psi)(a)(s) = \psi(s)(a).\]
Then $f \circ g = id$ and $g \circ f = id$ as required.

Now we check that $f$ and $g$ are continuous. By the above discussion, it suffices to check that: 
\[f^{-1} [B(K', B(K,U))] = B(K, B(K',U))\]
and
\[g^{-1} [B(K, B(K',U))] = B(K', B(K,U)),\]
for all $K \subseteq A$ compact, $K' \subseteq [0,1]$ compact and $U \subseteq B$ open. 

Let $y \in  B(K, B(K',U))$, then $f^{-1}(y) = \{ x ~ | ~ f(x) = y\}$. Now let $x \in f^{-1}(y)$, then we know 
\[f(x)(s)(a) = x(a)(s) = y(s)(a),\]
so $x$ must be contained in $B(K', B(K,U))$, and similarly we can check the converse, so $f$ is continuous. 

Similarly we can prove that $g$ is continuous. 
\end{proof}

\subsection{$K$-theory spectra}
Now we can define the $K$-theory spectrum. 

\begin{definition} \label{K-theory spectrum 1}
Let $\mathcal{K} = \mathcal{K}(\mathcal{H})$.
Define $\mathbb{K}(A)$ to be the sequence of based topological spaces $$K_n= \text{Hom}_g (\mathcal{S} \widehat{\otimes} \mathbb{F} \widehat{\otimes} \mathcal{K}, A \widehat{\otimes} \mathbb{F}_{n,0} \widehat{\otimes} \mathcal{K})$$ where $m \geq 0$. Define maps $\epsilon_n \colon K_n \rightarrow \Omega K_{n+1}$:
  \[\xymatrix{\text{Hom}_g( \mathcal{S} \widehat{\otimes} \mathbb{F} \widehat{\otimes} \mathcal{K}, A \widehat{\otimes} \mathbb{F}_{n,0} \widehat{\otimes} \mathcal{K})  \ar[r]  & \Omega \text{Hom}_g(\mathcal{S} \widehat{\otimes} \mathbb{F} \widehat{\otimes}\mathcal{K},  A \widehat{\otimes} \mathbb{F}_{n+1,0} \widehat{\otimes} \mathcal{K}) \ar@{=}[d]^{\cong} \\ &  \text{Hom}_g(\mathcal{S} \widehat{\otimes} A \widehat{\otimes} \mathcal{K}, \Sigma ( A \widehat{\otimes} \mathbb{F}_{n+1,0}) \widehat{\otimes} \mathcal{K})}   \]
by: 
\[\epsilon (x_t) = (b \widehat{\otimes} \text{id}_{A \widehat{\otimes} \mathbb{F}_{n,0} \widehat{\otimes} \mathcal{K}}) \circ (\text{id}_{\mathcal{S}} \widehat{\otimes} x_t)  \circ (\Delta \widehat{\otimes} \text{id}_{\mathbb{F} \widehat{\otimes}\mathcal{K}}),\]
for all $x_t \in\text{Asy}_g (\mathcal{S} \widehat{\otimes} A \widehat{\otimes} \mathcal{K}, B \widehat{\otimes} \mathbb{F}_{n,0} \widehat{\otimes} \mathcal{K})$ and the Bott map $b \in \text{Hom}_g(\mathcal{S},\Sigma  \mathbb{F}_{1,0})$. 

\end{definition}

We now give an alternative definition for the spectrum of graded $K$-theory in terms of asymptotic morphisms. 

\begin{definition} \label{K-theory spectrum 2}
Let $\mathcal{K} = \mathcal{K}(\mathcal{H})$.
Define $\mathbb{K}'(A)$ to be the orthogonal quasi-spectrum with the sequence of based quasi-topological spaces $$K'_n = \text{Asy}_g (\mathcal{S} \widehat{\otimes} \mathbb{F} \widehat{\otimes} \mathcal{K}, A \widehat{\otimes} \mathbb{F}_{n,0} \widehat{\otimes} \mathcal{K})$$ where $n \geq 0$. The structure maps $\epsilon \colon K'_n \rightarrow \Omega K'_{n+1}$: 
  \[\xymatrix{\text{Asy}_g( \mathcal{S} \widehat{\otimes} \mathbb{F} \widehat{\otimes} \mathcal{K}, A \widehat{\otimes} \mathbb{F}_{n,0} \widehat{\otimes} \mathcal{K})  \ar[r]  & \Omega \text{Asy}_g(\mathcal{S} \widehat{\otimes} \mathbb{F} \widehat{\otimes}\mathcal{K},  A \widehat{\otimes} \mathbb{F}_{n+1,0} \widehat{\otimes} \mathcal{K}) \ar@{=}[d]^{\cong} \\ &  \text{Asy}_g(\mathcal{S} \widehat{\otimes} \mathbb{F} \widehat{\otimes} \mathcal{K}, \Sigma ( A \widehat{\otimes} \mathbb{F}_{n+1,0}) \widehat{\otimes} \mathcal{K})}   \]
are defined by: 
\[\epsilon (x_t) = (b \widehat{\otimes} \text{id}_{A \widehat{\otimes} \mathbb{F}_{n,0} \widehat{\otimes} \mathcal{K}}) \circ (\text{id}_{\mathcal{S}} \widehat{\otimes} x_t)  \circ (\Delta \widehat{\otimes} \text{id}_{\mathbb{F} \widehat{\otimes}\mathcal{K}}),\]
for all $x_t \in\text{Asy}_g (\mathcal{S} \widehat{\otimes} A \widehat{\otimes} \mathcal{K}, B \widehat{\otimes} \mathbb{F}_{n,0} \widehat{\otimes} \mathcal{K})$ and the Bott map $b \in \text{Hom}_g(\mathcal{S},\Sigma\mathbb{F}_{1,0})$. 
\end{definition}

We now notice that Definition~\ref{K-theory spectrum 1} and Definition~\ref{K-theory spectrum 2} are orthogonal and orthogonal quasi-spectra for the same reason that~\ref{E-theory spectrum} forms one and consequently the following result comes from the stable homotopy groups coming from these spectra. 

\begin{proposition}
The map of spectrum $f \colon \mathbb{K}(A) \rightarrow \mathbb{K}'(A)$, defined by $f(\varphi) = \varphi$ for all $\varphi \in \mathbb{K}(A)$ is a weak equivalence. 
\end{proposition}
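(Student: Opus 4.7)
The plan is to prove $f$ is a weak equivalence by showing it induces isomorphisms on all stable homotopy groups, and identifying both sides with the graded $K$-theory groups $K_k(A)$. First, I observe that $f$ is a well-defined morphism of orthogonal quasi-spectra: its level-$n$ component sends a graded $\ast$-homomorphism $\varphi$ to the constant asymptotic morphism $\varphi_t = \varphi$, and because the formulas defining the structure maps $\epsilon_n$, the $O(n)$-actions, and the iterated maps $\sigma_m^n$ are literally the same on $K_n$ and $K'_n$ (with the $\ast$-homomorphism $\varphi$ playing the role of $\varphi_t$), compatibility with all this structure is automatic.

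For the target, $\mathbb{K}'(A)$ is the $E$-theory spectrum $\mathbb{X}(\mathbb{F}, A)$, so the earlier computation $\pi_k \mathbb{X}(A,B) = E_g(A, \Sigma^k B)$ specialises to $\pi_k \mathbb{K}'(A) = E_g(\mathbb{F}, \Sigma^k A) \cong K_k(A)$, using the identification $K_n(A) \cong E_g^n(\mathbb{F}, A)$ recalled at the end of Section~1.

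For the source, the compact-open-topology homeomorphism $\Omega \text{Hom}_g(X,Y) \cong \text{Hom}_g(X, \Sigma Y)$ from the preceding proposition, iterated $k+n$ times, gives
\[\pi_{k+n} K_n = \pi_0 \text{Hom}_g(\mathcal{S} \widehat{\otimes} \mathbb{F} \widehat{\otimes} \mathcal{K}, \Sigma^{k+n} A \widehat{\otimes} \mathbb{F}_{n,0} \widehat{\otimes} \mathcal{K}) = [\mathcal{S} \widehat{\otimes} \mathbb{F} \widehat{\otimes} \mathcal{K}, \Sigma^{k+n} A \widehat{\otimes} \mathbb{F}_{n,0} \widehat{\otimes} \mathcal{K}].\]
The parallel computation for $\mathbb{K}'(A)$ is identical except that $[\cdot,\cdot]$ is replaced by the set of homotopy classes of asymptotic morphisms $\llbracket\cdot,\cdot\rrbracket$, and under these identifications $\pi_{k+n} f_n$ is precisely the canonical map sending a $\ast$-homomorphism to its constant asymptotic extension.

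The main obstacle, and the heart of the proof, is to check that this canonical map
\[[\mathcal{S} \widehat{\otimes} \mathbb{F} \widehat{\otimes} \mathcal{K}, \Sigma^{k+n} A \widehat{\otimes} \mathbb{F}_{n,0} \widehat{\otimes} \mathcal{K}] \longrightarrow \llbracket \mathcal{S} \widehat{\otimes} \mathbb{F} \widehat{\otimes} \mathcal{K}, \Sigma^{k+n} A \widehat{\otimes} \mathbb{F}_{n,0} \widehat{\otimes} \mathcal{K} \rrbracket\]
is a bijection for all $k, n$. This is precisely the content of the isomorphism $K_n(A) \cong E_g^n(\mathbb{F}, A)$ from Section~1 (applied with $A$ replaced by $\Sigma^k A \widehat{\otimes} \mathbb{F}_{n,0}$): after tensoring the source by $\mathcal{S} \widehat{\otimes} \mathcal{K}$, every homotopy class of asymptotic morphisms has a representative that is a $\ast$-homomorphism, and any two $\ast$-homomorphisms that are asymptotically homotopic are homotopic through $\ast$-homomorphisms. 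Granted this levelwise bijectivity, passage to the direct limit over $n$ yields the desired isomorphism $\pi_k f \colon K_k(A) \to K_k(A)$, which is the identity under the two identifications, and hence $f$ is a weak equivalence.
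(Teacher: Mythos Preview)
Your proposal is correct and follows essentially the same approach as the paper's proof: both reduce the weak equivalence to the bijection $[\mathcal{S} \widehat{\otimes} \mathbb{F} \widehat{\otimes} \mathcal{K}, -] \to \llbracket \mathcal{S} \widehat{\otimes} \mathbb{F} \widehat{\otimes} \mathcal{K}, - \rrbracket$, which is exactly the identification $K_n(A) \cong E_g^n(\mathbb{F},A)$ recalled in Section~1. The only organizational difference is that you package the argument via stable homotopy groups and the direct limit, whereas the paper argues levelwise (showing $\pi_n$ of each level map is an isomorphism by reducing to a $\pi_0$ statement with shifted target); since both spectra are $\Omega$-quasi-spectra these amount to the same thing, and your version is the more carefully articulated of the two.
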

\begin{proof}
Consider the map $f' \colon \text{Hom}_g(\mathcal{S}, A) \rightarrow \text{Asy}_g(\mathcal{S}, A)$ then this induces the map $f_{\ast}' \colon [\mathcal{S}, A] \rightarrow \llbracket \mathcal{S}, A \rrbracket$. 
Now the map, 
\[\text{Hom}_g(\mathcal{S}, A \widehat{\otimes} \mathbb{F}_{n+1,0}) \rightarrow \text{Asy}_g(\mathcal{S}, A \widehat{\otimes} \mathbb{F}_{n+1,0}),\] induces an isomorphism at the level of $\pi_0$, and therefore the map 
\[\text{Hom}_g(\mathcal{S},A \widehat{\otimes} \mathbb{F}_{1,0}) \rightarrow \text{Asy}_g(\mathcal{S},A \widehat{\otimes}\mathbb{F}_{1,0}),\]
induces an isomorphism at the level of $\pi_n$. 
Therefore we have a weak equivalence. 
Then we can also consider the map $f$ above and the same applies, since we obtain this map by tensoring with the suspension and the complex numbers. 
\end{proof}

\begin{corollary}\label{inversespectra}
The map of spectrum $f \colon \mathbb{K}(A) \rightarrow \mathbb{K}'(A)$ has a natural inverse $g \colon \mathbb{K}' \rightarrow \mathbb{K}$ at the level of stable homotopy groups.
\end{corollary}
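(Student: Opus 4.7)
The plan is to deduce this directly from the preceding proposition. That result establishes that $f\colon \mathbb{K}(A) \to \mathbb{K}'(A)$ is a weak equivalence of spectra, so by definition the induced maps on stable homotopy groups
\[f_{\ast}\colon \pi_n \mathbb{K}(A) \longrightarrow \pi_n \mathbb{K}'(A)\]
are isomorphisms of abelian groups for every $n \geq 0$. Since any isomorphism of abelian groups admits a unique two-sided inverse, I would simply set $g_n := (f_{\ast})^{-1}$ in each degree and assemble these into a map $g\colon \pi_{\ast}\mathbb{K}'(A) \to \pi_{\ast}\mathbb{K}(A)$ of graded abelian groups satisfying $f_{\ast}\circ g = \mathrm{id}$ and $g \circ f_{\ast} = \mathrm{id}$.

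The key conceptual point to flag is that $g$ is only being claimed at the level of stable homotopy groups, not as an actual morphism of spectra. So there is no need to construct a lift of $g$ to the level of quasi-spectra (which would indeed be nontrivial, since there is no evident way to send asymptotic morphisms to genuine $\ast$-homomorphisms). The weak equivalence already gives everything that the statement demands.

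For naturality in $A$, I would note that a graded $\ast$-homomorphism $h\colon A \to A'$ induces morphisms of spectra $h_\sharp\colon \mathbb{K}(A) \to \mathbb{K}(A')$ and $h_\sharp\colon \mathbb{K}'(A) \to \mathbb{K}'(A')$ by post-composition, and the definitions make the square with $f$ commute on the nose. Passing to stable homotopy groups and conjugating by the isomorphisms $f_{\ast}$ turns this commuting square into one for $g$, so $g$ is a natural transformation between the functors $A \mapsto \pi_{\ast}\mathbb{K}'(A)$ and $A \mapsto \pi_{\ast}\mathbb{K}(A)$. The main (and only) obstacle is thus merely making the sense of ``natural inverse at the level of stable homotopy groups'' precise, which amounts to this last observation.
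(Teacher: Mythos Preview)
Your proposal is correct and matches the paper's approach exactly: the paper gives no argument beyond a \qed, treating the corollary as an immediate consequence of the preceding weak-equivalence proposition, and your write-up simply makes explicit why that suffices (invert the isomorphisms $f_\ast$ degreewise, with naturality inherited from the commuting squares for $f$).
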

\qed

\begin{thm}
Let $A$ and $B$ be $C^\ast$-algebras. Then there is a natural map of orthogonal quasi-spectra
\[\nu'_{m,n}\colon \mathbb{K}(A) \wedge \mathbb{E}(A,B) \rightarrow \mathbb{K}'(B),\]
defined by 
\[ (\alpha \wedge \beta_t)_t \mapsto (\beta_{t} \widehat{\otimes} \text{id}_{\mathbb{F}_{m,0}}) \circ (\text{id}_{\mathcal{S}} \widehat{\otimes}\alpha) \circ (\Delta \widehat{\otimes} \text{id}_{\mathbb{F} \widehat{\otimes} \mathcal{K}(\mathcal{H})}), \] 
where $\alpha \in \text{Hom}_g (\mathcal{S} \widehat{\otimes} A \widehat{\otimes} \mathcal{K}(\mathcal{H}), B \widehat{\otimes} \mathbb{F}_{m,0} \widehat{\otimes}\mathcal{K}(\mathcal{H}))$ and $\beta \in \text{Asy}_g (\mathcal{S} \widehat{\otimes} B \widehat{\otimes}\mathcal{K}(\mathcal{H}), C \widehat{\otimes} \mathbb{F}_{n,0} \widehat{\otimes} \mathcal{K}(\mathcal{H}))$. 
\end{thm}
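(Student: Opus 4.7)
The plan is to mimic the argument already used for the $E$-theory product $\mu_{m,n}\colon \mathbb{X}(A,B) \wedge \mathbb{X}(B,C) \to \mathbb{X}(A,C)$, substituting the graded $\ast$-homomorphism $\alpha \in \mathbb{K}_m(A)$ for the first asymptotic-morphism factor. First I would check that the formula genuinely lands in $\text{Asy}_g(\mathcal{S} \widehat{\otimes} \mathbb{F} \widehat{\otimes} \mathcal{K}, B \widehat{\otimes} \mathbb{F}_{m+n,0} \widehat{\otimes} \mathcal{K})$ after identifying Clifford factors via $\mathbb{F}_{m,0} \widehat{\otimes} \mathbb{F}_{n,0} \cong \mathbb{F}_{m+n,0}$; since $\alpha$ is a genuine graded $\ast$-homomorphism and $\beta_t$ is a graded asymptotic morphism, Lemma~\ref{tensorequivalence} ensures the composite is indeed an asymptotic morphism, and the grading is automatic from the graded tensor product. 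The $O(m) \times O(n)$-equivariance and basepoint preservation then follow from the same Clifford-permutation argument used in Proposition~\ref{mapisqcandequivariant}, while quasi-continuity is a consequence of Proposition~\ref{tensorwithanasymptoticmorphismisqc} together with the observation that the remaining pieces $\Delta \widehat{\otimes} \text{id}$ and $\text{id}_{\mathcal{S}} \widehat{\otimes} \alpha$ are $\ast$-homomorphisms depending continuously on $\alpha$ in the compact-open topology on $\mathbb{K}_m(A)$.

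The heart of the argument is checking compatibility with the structure maps of both factors, i.e.\ commutativity of the two squares
\[\xymatrixcolsep{2pc}\xymatrixrowsep{2pc}\xymatrix{
\mathbb{K}_m(A) \wedge \mathbb{E}_n(A,B) \ar[d]_-{\epsilon \wedge \text{id}} \ar[r]^-{\nu'_{m,n}} &  \mathbb{K}'_{m+n}(B) \ar[d]^-{\epsilon}\\
\Omega\mathbb{K}_{m+1}(A) \wedge \mathbb{E}_n(A,B) \ar[r]^-{\nu'_{m+1,n}} & \Omega \mathbb{K}'_{m+n+1}(B)}\]
and
\[\xymatrixcolsep{2pc}\xymatrixrowsep{2pc}\xymatrix{
\mathbb{K}_m(A) \wedge \mathbb{E}_n(A,B) \ar[d]_-{\text{id} \wedge \epsilon} \ar[r]^-{\nu'_{m,n}} &  \mathbb{K}'_{m+n}(B) \ar[d]^-{\epsilon}\\
\mathbb{K}_m(A) \wedge \Omega\mathbb{E}_{n+1}(A,B) \ar[r]^-{\nu'_{m,n+1}} & \Omega \mathbb{K}'_{m+n+1}(B).}\]
The maps $\epsilon$ on $\mathbb{K}$, $\mathbb{K}'$ and $\mathbb{E}$ are all built from the same Bott element $b$ and coproduct $\Delta$, so the chains of equalities verifying these diagrams go through essentially verbatim as in the preceding proof for $\mu_{m,n}$, with Lemma~\ref{tensorequivalence} the central tool used to commute $\text{id}_{\mathcal{S}} \widehat{\otimes} \alpha$ past $\beta_t \widehat{\otimes} \text{id}$. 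Naturality in $A$ and $B$ is then immediate from the naturality of $\Delta$, $b$ and the graded tensor product.

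The main obstacle is the bookkeeping in these two structure-map diagrams: one must keep track of which tensor slot each map acts on, and apply Lemma~\ref{tensorequivalence} in the correct factor at each stage, paying particular attention to the grading sign conventions introduced when Clifford factors are permuted through $\mathbb{F}_{m,0} \widehat{\otimes} \mathbb{F}_{n,0} \cong \mathbb{F}_{m+n,0}$. Once this bookkeeping is carried out, everything else follows mechanically from the machinery already developed in Section~3.2.
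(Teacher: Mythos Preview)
Your proposal is correct and follows the same underlying strategy as the paper, but it is considerably more thorough than what the paper actually writes. The paper's proof is a single sentence: it observes that the composition of a graded $\ast$-homomorphism with a graded asymptotic morphism is again a graded asymptotic morphism, so the displayed formula lands in the correct space, and leaves the remaining verifications (equivariance, quasi-continuity, compatibility with structure maps, naturality) implicit, since they are formally identical to those already carried out for $\mu_{m,n}$. Your plan to redo those verifications explicitly is entirely sound and arguably more rigorous; it simply spells out what the paper takes for granted by analogy with the preceding theorem.
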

\begin{proof}
Since the composition of a $\ast$-homomorphism and an asymptotic morphism is an asymptotic morphism it is clear that $\alpha \wedge \beta$ is an asymptotic morphism and lies in the required spectra.  
\end{proof}

By the above theorem and Corollary~\ref{inversespectra}, we obtain
\begin{corollary}
There is a natural map of spectra
\[\nu_{m,n}\colon \mathbb{K}(A) \wedge \mathbb{E}(A,B) \rightarrow \mathbb{K}(B),\]
with the above criteria.
\end{corollary}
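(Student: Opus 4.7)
The plan is a very short composition argument. The previous theorem gives the map $\nu'_{m,n}\colon \mathbb{K}(A) \wedge \mathbb{E}(A,B) \rightarrow \mathbb{K}'(B)$ built from the formula $(\alpha\wedge\beta_t)_t \mapsto (\beta_t\widehat{\otimes}\mathrm{id}_{\mathbb{F}_{m,0}})\circ(\mathrm{id}_{\mathcal{S}}\widehat{\otimes}\alpha)\circ(\Delta\widehat{\otimes}\mathrm{id}_{\mathbb{F}\widehat{\otimes}\mathcal{K}(\mathcal{H})})$, and Corollary~\ref{inversespectra} supplies a natural inverse $g\colon \mathbb{K}'(B)\to \mathbb{K}(B)$ to the comparison $f\colon \mathbb{K}(B)\to\mathbb{K}'(B)$ at the level of stable homotopy groups. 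So the obvious move is to take $\nu_{m,n} := g\circ \nu'_{m,n}$.

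First I would record that $\nu'_{m,n}$ is indeed a map of orthogonal quasi-spectra by the preceding theorem, so at the level of stable homotopy groups it induces a morphism $(\nu'_{m,n})_{\ast}\colon \pi_{\ast}(\mathbb{K}(A)\wedge \mathbb{E}(A,B))\to \pi_{\ast}\mathbb{K}'(B)$. Next I would post-compose with $g_{\ast}\colon \pi_{\ast}\mathbb{K}'(B)\to \pi_{\ast}\mathbb{K}(B)$ from Corollary~\ref{inversespectra}. This yields the desired $\nu_{m,n}$, defined by exactly the same formula as $\nu'_{m,n}$, but landing (up to weak equivalence) in $\mathbb{K}(B)$ rather than $\mathbb{K}'(B)$.

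Naturality in $A$ and $B$ is inherited: $\nu'_{m,n}$ is natural by the formulaic definition (functoriality of $\widehat{\otimes}$, composition with $\Delta$, and the Bott map $b$ being fixed and central), and $g$ is natural by the statement of Corollary~\ref{inversespectra}. Compatibility with the structure maps of the spectra reduces to the analogous compatibility for $\nu'_{m,n}$ established in the previous theorem, together with the fact that $f$ and $g$ commute with the structure maps up to weak equivalence.

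The only mildly subtle point — and the main thing to be careful about — is that the inverse $g$ is only given at the level of stable homotopy groups, so strictly speaking $\nu_{m,n}$ is a map of spectra in the stable homotopy category rather than a strict point-set map. Thus I would phrase the conclusion with that caveat, inheriting exactly the sense of ``map of spectra'' used in Corollary~\ref{inversespectra}. No further computation is needed beyond invoking the theorem and the corollary.
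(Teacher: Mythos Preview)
Your proposal is correct and is exactly the argument the paper intends: the corollary is stated as an immediate consequence of the preceding theorem together with Corollary~\ref{inversespectra}, and the paper simply records a \qed. Your additional caveat about $g$ existing only at the level of stable homotopy groups is a fair clarification of the sense in which the resulting $\nu_{m,n}$ is a map of spectra.
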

\qed

Now we finalise this section by combining the graded $K$-theory spectrum and $K$-homology spectrum noting that $\mathbb{K}_{\text{hom}} (A) = \mathbb{E} (A, \mathbb{F})$.

\begin{thm}
There is a canonical map 
\[S\colon \mathbb{K}(A \widehat{\otimes} B) \wedge \mathbb{K}_{\text{hom}}(A) \rightarrow \mathbb{K}'(B)\]
of orthogonal quasi-spectra. The map $S$ is natural in the variable $B$ in the obvious sense and natural in the variable $A$, in the sense that if we have a $\ast$-homomorphism $f \colon A \rightarrow A'$ then we have the following commutative diagram 
\[\xymatrixcolsep{2pc}\xymatrixrowsep{2pc}\xymatrix{
\mathbb{K}(A \widehat{\otimes} B) \wedge \mathbb{K}_\text{hom}(A)  \ar[r]^-{S} &  \mathbb{K}'(B) \ar@{=}[d]\\
\mathbb{K}(A \widehat{\otimes} B) \wedge \mathbb{K}_\text{hom}(A') \ar[u]^-{\text{id} \wedge f^{\ast}} \ar[d]_{f_{\ast} \wedge \text{id}}  & \mathbb{K}'(B)\\
\mathbb{K}(A' \widehat{\otimes}B) \wedge\mathbb{K}_\text{hom}(A') \ar[r]^-{S} & \mathbb{K}'(B) \ar@{=}[u],}\]
where $f_{\ast}$  and $f^\ast$ are defined by: 
\[f_{\ast}(\alpha)(\lambda) = (f \widehat{\otimes} \text{id}_{B \widehat{\otimes} \mathbb{F}_{m,0} \widehat{\otimes} \mathcal{K}(\mathcal{H})})(\alpha(\lambda)),\]
and 
\[f^{\ast}(\beta_t)(a) = \beta_t(f \widehat{\otimes} \text{id}_{\mathcal{S} \widehat{\otimes} \mathcal{K}(\mathcal{H})}) (a), \]
with $\alpha \in \mathbb{K}(A \widehat{\otimes} B)$, $\beta \in \mathbb{K}_\text{hom}(A')$, $a \in \mathcal{S} \widehat{\otimes} A \widehat{\otimes} \mathcal{K}$ and $\lambda \in \mathcal{S} \widehat{\otimes} \mathbb{F} \widehat{\otimes} \mathcal{K}$.
\end{thm}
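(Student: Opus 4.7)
The plan is to mimic the formula used for the external pairing $\nu'_{m,n} \colon \mathbb{K}(A) \wedge \mathbb{E}(A,B) \to \mathbb{K}'(B)$ constructed above. For $\alpha \in \mathbb{K}_m(A \widehat{\otimes} B) = \text{Hom}_g(\mathcal{S} \widehat{\otimes} \mathbb{F} \widehat{\otimes} \mathcal{K}, (A \widehat{\otimes} B) \widehat{\otimes} \mathbb{F}_{m,0} \widehat{\otimes} \mathcal{K})$ and $\beta_t \in (\mathbb{K}_\text{hom})_n(A) = \text{Asy}_g(\mathcal{S} \widehat{\otimes} A \widehat{\otimes} \mathcal{K}, \mathbb{F} \widehat{\otimes} \mathbb{F}_{n,0} \widehat{\otimes} \mathcal{K})$, I would define
\[
S(\alpha \wedge \beta)_t = (\beta_t \widehat{\otimes} \text{id}_{B \widehat{\otimes} \mathbb{F}_{m,0}}) \circ (\text{id}_{\mathcal{S}} \widehat{\otimes} \alpha) \circ (\Delta \widehat{\otimes} \text{id}_{\mathbb{F} \widehat{\otimes} \mathcal{K}}),
\]
using the standard identifications $\mathcal{K} \widehat{\otimes} \mathcal{K} \cong \mathcal{K}$ and $\mathbb{F}_{n,0} \widehat{\otimes} \mathbb{F}_{m,0} \cong \mathbb{F}_{m+n,0}$ to present the output in the target $K'_{m+n}(B)$. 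My first step would be to trace the tensor factors and confirm well-definedness.

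The second step is to show that $S$ assembles into a morphism of orthogonal quasi-spectra. Quasi-continuity at each level follows from Proposition~\ref{tensorwithanasymptoticmorphismisqc} together with the fact that $\alpha$, $\Delta$ and the various identity maps are honest $\ast$-homomorphisms, in direct analogy with Proposition~\ref{structuremapisqc}. The $O(m) \times O(n) \to O(m+n)$-equivariance reduces to permuting copies of $\mathbb{F}_{1,0}$ exactly as in Proposition~\ref{mapisqcandequivariant}. Compatibility with the structure maps $\epsilon$ of $\mathbb{K}$, $\mathbb{K}_\text{hom}$ and $\mathbb{K}'$ is then verified by the same two-square calculation appearing in the proof of the $E$-theory product theorem, invoking Lemma~\ref{tensorequivalence} once to interchange the action of $\beta_t$ with that of the Bott map.

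Naturality in $B$ is immediate: the $B$ factor is only passively carried by $\alpha$ and untouched by $\beta$ or $\Delta$, so any $\ast$-homomorphism $B \to B'$ simply post-composes with the formula for $S$. For naturality in $A$, I would expand the two routes of the diagram using the stated formulas for $f_{\ast}$ and $f^{\ast}$. In the clockwise route, $(f^{\ast}\beta')_t = \beta'_t \circ (\text{id}_{\mathcal{S}} \widehat{\otimes} f \widehat{\otimes} \text{id}_{\mathcal{K}})$ inserts $f$ just before $\beta'_t$, while in the counter-clockwise route, $f_{\ast}\alpha = (f \widehat{\otimes} \text{id}_{B \widehat{\otimes} \mathbb{F}_{m,0} \widehat{\otimes} \mathcal{K}}) \circ \alpha$ inserts $f$ just after $\alpha$. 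Because $f$ is a strict $\ast$-homomorphism (not merely asymptotic), bifunctoriality of the graded tensor product gives the single expression
\[
(\beta'_t \widehat{\otimes} \text{id}_{B \widehat{\otimes} \mathbb{F}_{m,0}}) \circ (\text{id}_{\mathcal{S}} \widehat{\otimes} f \widehat{\otimes} \text{id}_{B \widehat{\otimes} \mathbb{F}_{m,0} \widehat{\otimes} \mathcal{K}}) \circ (\text{id}_{\mathcal{S}} \widehat{\otimes} \alpha) \circ (\Delta \widehat{\otimes} \text{id})
\]
for both routes, so the diagram commutes strictly.

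The main obstacle I anticipate is not the algebra but the bookkeeping of tensor factors. Because $\mathbb{K}$ uses strict $\ast$-homomorphisms while $\mathbb{K}_\text{hom}$ uses asymptotic ones, care is needed when invoking Lemma~\ref{tensorequivalence} in the proof of structure-map compatibility, to ensure each tensor-factor interchange is genuinely valid as an asymptotic identity; and when checking the naturality square, to distinguish the strict equalities available here (because $f$ is a $\ast$-homomorphism) from equalities that would only hold up to homotopy if $f$ were replaced by an asymptotic morphism.
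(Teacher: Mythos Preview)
Your proposal is correct and the naturality computation you sketch is essentially identical to the one the paper carries out. There is one organizational difference worth noting: you define $S$ directly by the explicit formula and then re-verify from scratch that it is a map of orthogonal quasi-spectra (quasi-continuity, equivariance, compatibility with structure maps), whereas the paper instead \emph{factors} $S$ as the composite
\[
\mathbb{K}(A \widehat{\otimes} B) \wedge \mathbb{E}(A,\mathbb{F}) \xrightarrow{\text{id}\wedge \widehat{\otimes}_B} \mathbb{K}(A \widehat{\otimes} B) \wedge \mathbb{E}(A\widehat{\otimes} B,B) \xrightarrow{\nu'_{m,n}} \mathbb{K}'(B),
\]
so that all the spectrum-map verifications are inherited from the already-established properties of $\nu'_{m,n}$. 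Unwinding this composite gives exactly your formula, and the paper's naturality-in-$A$ calculation then proceeds word-for-word as in your last display. The paper's route is marginally more economical since it avoids repeating the structure-map compatibility argument; your route is more self-contained. Either is fine.
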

\begin{proof}
Writing $\mathbb{K}_\text{hom}(A) = \mathbb{E}(A,\mathbb{F})$, we can extend the definition of $S$ to a composition of maps, in order to obtain the following diagram:
\[\xymatrixcolsep{3pc}\xymatrixrowsep{2pc}\xymatrix{
\mathbb{K}(A \widehat{\otimes} B) \wedge \mathbb{E}(A,\mathbb{F})  \ar[r]^-{id \wedge \widehat{\otimes}_B} & \mathbb{K}(A \widehat{\otimes} B) \wedge \mathbb{E}(A \widehat{\otimes} B,B) \ar[r]^-{\nu'_{m,n}}& \mathbb{K}'(B) \ar@{=}[d]\\
\mathbb{K}(A \widehat{\otimes} B) \wedge\mathbb{E}(A',\mathbb{F}) \ar[u]^-{\text{id} \wedge f^{\ast}} \ar[d]_{f_{\ast} \wedge \text{id}} & & \mathbb{K}'(B)\\
\mathbb{K}(A' \widehat{\otimes} B) \wedge\mathbb{E}(A',\mathbb{F}) \ar[r]^-{\text{id} \wedge \widehat{\otimes}_B} &  \mathbb{K}(A' \widehat{\otimes} B) \wedge \mathbb{E}(A' \otimes B,B) \ar[r]^-{\nu'_{m,n}} & \mathbb{K}'(B) \ar@{=}[u],}\]
\[\mathbb{K}(A \widehat{\otimes} B) \wedge \mathbb{E}(A \widehat{\otimes} B, B) \rightarrow \mathbb{K}'(B).\]
Then we have

\begin{eqnarray*}
\lefteqn{\nu'_{m,n}(\text{id} \wedge \widehat{\otimes}_B)(\text{id} \wedge f^{\ast})(\alpha \wedge \beta_t)   =  \nu'_{m,n}(\text{id} \wedge \widehat{\otimes}_B)(\alpha \wedge f^{\ast}(\beta_t))} \\
&\phantom{lets s} = &  \nu'_{m,n}(\alpha \wedge f^{\ast}(\beta_t) \widehat{\otimes} \text{id}_B)  \\
&\phantom{lets s} =& ( f^{\ast}(\beta_t) \widehat{\otimes} \text{id}_{B \widehat{\otimes} \mathbb{F}_{m,0}}) \circ (\text{id}_{\mathcal{S}} \widehat{\otimes}\alpha) \circ (\Delta \widehat{\otimes} \text{id}_{\mathbb{F} \widehat{\otimes} \mathcal{K}(\mathcal{H})}) \\
&\phantom{lets s} =& ([\beta_t \circ (f \widehat{\otimes} \text{id}_{\mathcal{S} \widehat{\otimes} \mathcal{K}(\mathcal{H})})] \widehat{\otimes} \text{id}_{B \widehat{\otimes} \mathbb{F}_{m,0}}) \circ (\text{id}_{\mathcal{S}} \widehat{\otimes}\alpha) \circ (\Delta \widehat{\otimes} \text{id}_{\mathbb{F} \widehat{\otimes} \mathcal{K}(\mathcal{H})}) \\
&\phantom{lets s} =& (\beta_t \widehat{\otimes} \text{id}_{B \widehat{\otimes} \mathbb{F}_{m,0}}) \circ  (f \widehat{\otimes} \text{id}_{\mathcal{S} \widehat{\otimes} {B \widehat{\otimes} \mathbb{F}_{m,0}}  \widehat{\otimes} \mathcal{K}(\mathcal{H})}) \circ (\text{id}_{\mathcal{S}} \widehat{\otimes}\alpha) \circ (\Delta \widehat{\otimes} \text{id}_{\mathbb{F} \widehat{\otimes} \mathcal{K}(\mathcal{H})}) \\
&\phantom{lets s} = & (\beta_t \widehat{\otimes} \text{id}_{B \widehat{\otimes} \mathbb{F}_{m,0}}) \circ (\text{id}_{\mathcal{S}} \widehat{\otimes} [(f \widehat{\otimes} \text{id}_{{B \widehat{\otimes} \mathbb{F}_{m,0}}  \widehat{\otimes} \mathcal{K}(\mathcal{H})}) \circ \alpha] \circ (\Delta \widehat{\otimes} \text{id}_{\mathbb{F} \widehat{\otimes} \mathcal{K}(\mathcal{H})}) \\
&\phantom{lets s} =& (\beta_t \widehat{\otimes} \text{id}_{B \widehat{\otimes} \mathbb{F}_{m,0}}) \circ (\text{id}_{\mathcal{S}} \widehat{\otimes} f_{\ast}(\alpha)) \circ (\Delta \widehat{\otimes} \text{id}_{\mathbb{F} \widehat{\otimes} \mathcal{K}(\mathcal{H})}) \\ 
&\phantom{lets s} =& v'_{m,n}(f_{\ast}(\alpha) \wedge (\beta_t \widehat{\otimes} \text{id}_B)) \\
&\phantom{lets s} =& v'_{m,n}(\text{id} \wedge \widehat{\otimes}_B) (f_{\ast}(\alpha) \wedge \beta_t) = v'_{m,n}(\text{id} \wedge \widehat{\otimes}_B) ( f_{\ast} \wedge \text{id})(\alpha \wedge \beta_t). \\
\end{eqnarray*}
\end{proof}

\begin{corollary}
There is a canonical map 
\[S \colon \mathbb{K}(A \otimes B) \wedge \mathbb{K}_{\text{hom}}(A) \rightarrow \mathbb{K}(B)\]
of orthogonal quasi-spectra. The map $S$ is natural in the variable $B$ in the obvious sense and natural in the variable $A$, in the sense that if we have a $\ast$-homomorphism $f \colon A \rightarrow A'$ then we have the following commutative diagram 
\[\xymatrixcolsep{2pc}\xymatrixrowsep{2pc}\xymatrix{
\mathbb{K}(A \otimes B) \wedge \mathbb{K}_\text{hom}(A)  \ar[r]^-{S} &  \mathbb{K}(B) \ar@{=}[d]\\
\mathbb{K}(A \otimes B) \wedge \mathbb{K}_\text{hom}(A') \ar[u]^-{\text{id} \wedge f^{\ast}} \ar[d]_{f_{\ast} \wedge \text{id}}  & \mathbb{K}(B)\\
\mathbb{K}(A' \otimes B) \wedge\mathbb{K}_\text{hom}(A') \ar[r]^-{S} & \mathbb{K}(B). \ar@{=}[u],}\]

\end{corollary}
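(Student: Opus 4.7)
The plan is to deduce this corollary directly from the preceding theorem by post-composing with the natural inverse $g\colon \mathbb{K}'(B) \to \mathbb{K}(B)$ supplied by Corollary~\ref{inversespectra}. Specifically, I would define
\[ S \colon \mathbb{K}(A \widehat{\otimes} B) \wedge \mathbb{K}_{\text{hom}}(A) \xrightarrow{S'} \mathbb{K}'(B) \xrightarrow{g} \mathbb{K}(B), \]
where $S'$ denotes the canonical map of orthogonal quasi-spectra constructed in the preceding theorem. Since both $S'$ and $g$ are morphisms of orthogonal quasi-spectra (the latter being natural in $B$ at the level of stable homotopy groups by Corollary~\ref{inversespectra}), their composite is again such a map, giving the desired $S$.

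For naturality in $B$, it suffices to observe that each of the two factors is natural in $B$: the map $S'$ is natural in $B$ by the previous theorem, and $g$ is natural in $B$ by Corollary~\ref{inversespectra}. So the composite $S = g \circ S'$ is natural in $B$ in the obvious sense.

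For naturality in $A$, I would paste the commutative square already established in the previous theorem together with the identity square on the right-hand vertical edge. Explicitly, given a $\ast$-homomorphism $f \colon A \to A'$, the outer rectangle
\[
\xymatrixcolsep{2pc}\xymatrixrowsep{2pc}\xymatrix{
\mathbb{K}(A \widehat{\otimes} B) \wedge \mathbb{K}_\text{hom}(A) \ar[r]^-{S'} & \mathbb{K}'(B) \ar[r]^-{g} & \mathbb{K}(B) \\
\mathbb{K}(A \widehat{\otimes} B) \wedge \mathbb{K}_\text{hom}(A') \ar[u]^-{\text{id} \wedge f^{\ast}} \ar[d]_{f_{\ast} \wedge \text{id}} & & \\
\mathbb{K}(A' \widehat{\otimes} B) \wedge \mathbb{K}_\text{hom}(A') \ar[r]^-{S'} & \mathbb{K}'(B) \ar[r]^-{g} & \mathbb{K}(B)
}
\]
commutes, because the left square commutes by the previous theorem and the right square commutes trivially (the horizontal $g$'s agree). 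This yields the required commutative diagram in the statement.

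I expect no serious obstacle: the heavy lifting (checking compatibility of $S'$ with the structure maps using Lemma~\ref{tensorequivalence} and verifying the $A$-naturality square on the $\mathbb{K}'$-side) was already carried out in the preceding theorem, and the only new ingredient is the formal application of the natural comparison map $g\colon \mathbb{K}' \to \mathbb{K}$. The one point requiring a moment's care is to note that $g$ being a natural inverse \emph{at the level of stable homotopy groups} is sufficient for the stated diagram to commute in the intended (stable) sense.
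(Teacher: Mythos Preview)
Your proposal is correct and matches the paper's approach: the paper gives no proof beyond a \qed, treating the corollary as immediate from the preceding theorem together with Corollary~\ref{inversespectra}, which is precisely the post-composition with $g\colon \mathbb{K}'(B)\to\mathbb{K}(B)$ that you describe. Your caveat about $g$ being an inverse only at the level of stable homotopy groups is apt and is the right level at which to read the diagram.
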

\qed

\end{document}